\documentclass[a4papet,10pt]{article}
\usepackage{amsfonts,amssymb,mathtools}
\usepackage{graphicx}
\usepackage{amsmath}
\usepackage[latin1]{inputenc} 
\usepackage{bbm}
\usepackage{pstricks}
\usepackage{psfrag}
\usepackage{appendix}
\usepackage{amsthm}
\usepackage{dsfont}
\usepackage{stackrel}
\usepackage{eufrak}
\usepackage{mathrsfs}
\usepackage{enumerate}
\usepackage{cite}
\usepackage{chngcntr}
\counterwithin{figure}{section}
\usepackage[a4paper,top=30mm,bottom=30mm,left=35mm,right=35mm]{geometry}
\numberwithin{equation}{section}

\newcommand{\prob}{\mathbb{P}}
\newcommand{\Ex}{\mathbb{E}}
\newcommand{\Rl}{\mathbb{R}}

\newcommand{\dom}{\mbox{dom}\,}

\newcommand{\cl}{\mbox{cl}\,}

\newcommand{\Is}{I_{\text{s}}}
\newcommand{\Lambdas}{\Lambda_{\text{s}}}
\newcommand{\Ii}{I_{\text{i}}}
\newcommand{\IC}{I_{\text{C}}}

\newcommand{\LC}{\Upsilon_{\text{C}}}
\newcommand{\ells}{\ell_{\text{s}}}

\newcommand{\elli}{\ell_{\text{i}}}
\newcommand{\rew}{\mathcal{X}}
\newcommand{\Brew}{\mathcal{B}(\mathcal{X})}

\newtheorem*{cramer}{Cram\'er's theorem}

\newtheorem{assumption}{Assumption}[section]
\newtheorem{theorem}{Theorem}[section]
\newtheorem{proposition}{Proposition}[section]
\newtheorem{lemma}{Lemma}[section]

\newtheorem{example}{Example}[section]

\title{Large Deviations in Discrete-Time Renewal Theory}

  \author{Marco Zamparo\footnote{Dipartimento Scienza Applicata e Tecnologia, Politecnico di Torino,
  Corso Duca degli Abruzzi 24, Torino, I-10129, Italy
\newline \phantom{aaz} E-mail: \texttt{marco.zamparo@uniba.it}}}
  
\date{}

\begin{document}  
\maketitle

\begin{abstract}
We establish sharp large deviation principles for cumulative rewards
associated with a discrete-time renewal model, supposing that each
renewal involves a broad-sense reward taking values in a real
separable Banach space.  The framework we consider is the pinning
model of polymers, which amounts to a Gibbs change of measure of a
classical renewal process and includes it as a special case.  We first
tackle the problem in a constrained pinning model, where one of the
renewals occurs at a given time, by an argument based on convexity and
super-additivity. We then transfer the results to the original pinning
model by resorting to conditioning.\\

\noindent Keywords: Large deviations; Cram\'er's theorem;
Renewal processes; Polymer pinning models; Renewal-reward processes;
Banach space valued random variables\\

\noindent Mathematics Subject Classification 2020: 60F10; 60K05; 60K35
\end{abstract}

\section{Introduction}

\subsection{Renewals and Cram\'er's theorem}
\label{sec:basics}

{\it Renewal models} are widespread tools of probability, finding
application in Queueing Theory \cite{AsmussenBook}, Insurance
\cite{DicksonBook}, and Finance \cite{RSSTBook} among others.  A
renewal model describes some event that occurs at the {\it renewal
  times} $T_1,T_2,\ldots$ involving the {\it rewards} $X_1,X_2,\ldots$
respectively. If $S_1,S_2,\ldots$ are the {\it waiting times} for a
new occurrence of the event, then the renewal time $T_i$ can be
expressed for each $i\ge 1$ in terms of waiting times as
$T_i=S_1+\cdots+S_i$. This paper deals with cumulative rewards in
renewal models with waiting times taking discrete values and rewards
taking values in a Banach space. Specifically, we assume that the
waiting time and reward pairs $(S_1,X_1),(S_2,X_2),\ldots$ form an
independent and identically distributed sequence of random variables
on a probability space $(\Omega,\mathcal{F},\prob)$, the waiting times
being valued in $\{1,2,\ldots\}\cup\{\infty\}$ and the rewards being
valued in a real separable Banach space $(\rew,\|{\cdot}\|)$ equipped
with the Borel $\sigma$-field $\Brew$. Any dependence between $X_i$
and $S_i$ is allowed.  The {\it cumulative reward} by the integer time
$t\ge 0$ is the random variable $W_t:=\sum_{i\ge
  1}X_i\mathds{1}_{\{T_i\le t\}}$, which is measurable because $\rew$
is separable \cite{Talagrand}.  The stochastic process $t\mapsto W_t$
is the so-called {\it renewal-reward process} or {\it compound renewal
  process}, which plays an important role in applications
\cite{AsmussenBook,DicksonBook,RSSTBook}. The strong law of large
numbers can be proved for a renewal-reward process under the optimal
hypotheses $\Ex[S_1]<+\infty$ and $\Ex[\|X_1\|]<+\infty$, $\Ex$
denoting expectation with respect to the law $\prob$, by combining the
argument of renewal theory \cite{AsmussenBook} with the classical
strong law of large numbers of Kolmogorov in separable Banach spaces
\cite{Talagrand}.


In this paper we characterize the large fluctuations of the cumulative
reward $W_t$ by establishing large deviation principles that
generalize the Cram\'er's theorem to discrete-time renewal models.
Cram\'er's theorem describes the large fluctuations of non-random sums
of random variables, such as the total reward versus the number of
renewals $n$ given by $\sum_{i=1}^n X_i$.  It involves the {\it rate
  function} $\IC$ that maps each point $w\in\rew$ in the extended real
number
$\IC(w):=\sup_{\varphi\in\rew^\star}\{\varphi(w)-\ln\Ex[e^{\varphi(X_1)}]\}$,
where $\rew^\star$ is the topological dual of $\rew$.  The following
sharp form of Cram\'er's theorem has been obtained by Bahadur and
Zabell \cite{BaZa} through an argument based on convexity and
sub-additivity.
\begin{cramer}
  The following conclusions hold:
  \begin{enumerate}[{\upshape(a)}]
  \item the function $\IC$ is lower semicontinuous and proper convex;
  \item if $G\subseteq\rew$ is open, then
    \begin{equation*}
      \liminf_{n\uparrow\infty}\frac{1}{n}\ln\prob\Bigg[\frac{1}{n}\sum_{i=1}^n X_i\in G\Bigg]\ge -\inf_{w\in G}\{\IC(w)\};
\end{equation*}
  \item if $F\subseteq\rew$ is compact, open convex, or closed convex,
    then
\begin{equation*}
      \limsup_{n\uparrow\infty}\frac{1}{n}\ln\prob\Bigg[\frac{1}{n}\sum_{i=1}^n X_i\in F \Bigg]\le-\inf_{w\in F}\{\IC(w)\}.
\end{equation*}
Furthermore, if $\rew$ is finite-dimensional, then this bound is valid
for any closed set $F$ provided that $\Ex[e^{\xi\|X_1\|}]<+\infty$ for
  some number $\xi>0$.
\end{enumerate}
\end{cramer}
Earlier, Donsker and Varadhan \cite{Donsker} proved Cram\'er's theorem
under the stringent exponential moment condition
$\Ex[e^{\xi\|X_1\|}]<+\infty$ for all $\xi>0$. Importantly, they
showed that under this condition the upper bound in part (c) holds for
any closed set $F$ even when $\rew$ is infinite-dimensional.

Along with the use as stochastic processes, discrete-time renewal
models find application in Equilibrium Statistical Physics with a
different interpretation of the time coordinate. In particular, they
are employed in studying the phenomenon of polymer pinning, whereby a
polymer consisting of $t\ge 1$ monomers is pinned by a substrate at
the monomers $T_1,T_2,\ldots$ that represent renewed events along the
polymer chain \cite{Giac,Hollander_Polymer}. Supposing that the
monomer $T_i$ contributes an energy $-v(S_i)$ provided that $T_i\le
t$, $v$ being a real function over $\{1,2,\ldots\}\cup\{\infty\}$
called the {\it potential}, the state of the polymer is described by
the perturbed law $\prob_t$ defined on the measurable space
$(\Omega,\mathcal{F})$ by the Gibbs change of measure
\begin{equation*}
\frac{d\prob_t}{d\prob}:=\frac{e^{H_t}}{Z_t},
\end{equation*}
where $H_t:=\sum_{i\ge 1}v(S_i)\mathds{1}_{\{T_i\le t\}}$ is the {\it
  Hamiltonian} and the normalizing constant $Z_t:=\Ex[e^{H_t}]$ is the
{\it partition function}. The model $(\Omega,\mathcal{F},\prob_t)$ is
called the {\it pinning model} (PM) and generalizes the original
renewal model corresponding to the potential $v=0$.  The theory of
large deviations we develop in this paper is framed within the PM
supplied with the hypotheses of aperiodicity and extensivity.  The
{\it waiting time distribution} $p:=\prob[S_1=\cdot\,]$ is said to be
{\it aperiodic} if $\prob[S_1<\infty]>0$ and there is no proper
sublattice of $\{1,2,\ldots\}$ containing the support of $p$.  We
point out that a generic $p$ can be made aperiodic when
$\prob[S_1<\infty]>0$ by simply changing the time unit.
\begin{assumption}
\label{ass1}
The waiting time distribution $p$ is aperiodic.
\end{assumption}
We say that the potential $v$ is {\it extensive} if there exists a
real number $z_o$ such that $e^{v(s)}p(s)\le e^{z_os}$ for all
$s$. For instance, any potential $v$ with the property that
$\sup_{s\ge 1}\{v(s)/s\}<+\infty$ is extensive. Extensive potentials
are the only that serve Equilibrium Statistical Physics, where the
partition function
$Z_t\ge\Ex[e^{H_t}\mathds{1}_{\{S_1=t\}}]=e^{v(t)}p(t)$ is expected to
grow exponentially in $t$ in order to define the free energy
\cite{Giac,Hollander_Polymer}.
\begin{assumption}
\label{ass2}
The potential $v$ is extensive.
\end{assumption}

Together with the PM we consider the {\it constrained pinning model}
(CPM) where the last monomer is always pinned by the substrate
\cite{Giac,Hollander_Polymer}. It corresponds to the law $\prob_t^c$
defined on the measurable space $(\Omega,\mathcal{F})$ through the
change of measure
\begin{equation*}
\frac{d\prob_t^c}{d\prob}:=\frac{U_t e^{H_t}}{Z_t^c},
\end{equation*}
where $U_t:=\sum_{i\ge 1}\mathds{1}_{\{T_i=t\}}$ is the renewal
indicator, which takes value 1 if $t$ is a renewal and value 0
otherwise, and $Z_t^c:=\Ex[U_te^{H_t}]$ is the partition function.
Our interest in the CPM is twofold.  On the one hand, it turns out to
be an effective mathematical tool to tackle the PM. Indeed, we can
obtain a large deviation principle within the CPM by an argument based
on convexity and super-additivity, and then transfer it to the PM by
conditioning.  The mentioned argument is a generalization of the
approach to Cram\'er's theorem by Bahadur and Zabell \cite{BaZa},
which in turn can be traced back to the method of Ruelle \cite{Ruelle}
and Lanford \cite{Lanford} for proving the existence of various
thermodynamic limits. On the other hand, the CPM is a significant
framework in itself because it is the mathematical skeleton of the
Poland-Scheraga model of DNA denaturation and of some relevant lattice
models of Statistical Mechanics, as discussed by the author in
Ref.\ \cite{Models} where use of the theory developed in the present
paper is made. These models are the cluster model of fluids proposed
by Fisher and Felderhof, the model of protein folding introduced
independently by Wako and Sait\^o first and Mu\~noz and Eaton later,
and the model of strained epitaxy considered by Tokar and Dreyss\'e.
The macroscopic observables that enter the thermodynamic description
of these systems turn out to be cumulative rewards corresponding to
rewards of the order of magnitude of the waiting times \cite{Models}.

Before introducing our main results, we must say that the CPM is not
well-defined a priori. In fact, it may happen with full probability
that the time $t$ is not a renewal, so that $Z_t^c=0$. However,
assumption \ref{ass1} resulting in $Z_t^c>0$ for every sufficiently
large $t$ settles the problem at least for all those $t$. To verify
this fact, we observe that aperiodicity of $p$ entails that there
exist $m$ coprime integers $\sigma_1,\ldots,\sigma_m$ such that
$p(\sigma_l)>0$ for each $l$. The bound $Z_t^c\ge
\Ex[U_te^{H_t}\prod_{i=1}^n\mathds{1}_{\{S_i=s_i\}}]=\prod_{i=1}^n
e^{v(s_i)}p(s_i)$ if $t=\sum_{i=1}^ns_i$ yields $Z^c_t>0$ whenever $t$
is an integer conical combination of $\sigma_1,\ldots,\sigma_m$.  On
the other hand, the Frobenius number $t_c\ge 0$ associated with
$\sigma_1,\ldots,\sigma_m$ is finite since these integers are coprime
and by definition any $t>t_c$ can be expressed as an integer conical
combination of them.  It follows that $Z_t^c>0$ for all $t>t_c$.

\subsection{Statement of main results}
\label{par:mainres}

This section reports the main results of the paper. In the sequel,
assumptions \ref{ass1} and \ref{ass2} are tacitly supposed to be
satisfied and the topological dual $\rew^\star$ of $\rew$ is
understood as a Banach space with the norm induced by $\|\cdot\|$.
Let $z$ be the function that maps each linear functional
$\varphi\in\rew^\star$ in the extended real number $z(\varphi)$
defined by
\begin{equation}
z(\varphi):=\inf\bigg\{\zeta\in\Rl\,:\,\Ex\Big[e^{\varphi(X_1)+v(S_1)-\zeta S_1}\mathds{1}_{\{S_1<\infty\}}\Big]\le 1\bigg\},
\label{defzetafun}
\end{equation}
where the infimum over the empty set is customarily interpreted as
$+\infty$. The following proposition puts this function into context
by relating $z$ to the scaled cumulant generating function of $W_t$
within the CPM.  According to this proposition, $z(0)$ turns out to be
the free energy of the CPM \cite{Giac,Hollander_Polymer} and, more in
general, $z(\varphi)$ can be regarded as the free energy of a CPM with
the (possibly non-extensive) potential
$v+\ln\Ex[e^{\varphi(X_1)}|S_1=\cdot\,]$.
\begin{proposition}
  \label{proposition:free_energy}
  The function $z$ is proper convex and lower semicontinuous. The following limit holds
  for every $\varphi\in\rew^\star$:
  \begin{equation*}
  \lim_{t\uparrow\infty}\frac{1}{t}\ln\Ex\big[U_te^{\varphi(W_t)+H_t}\big]=z(\varphi).
\end{equation*}
\end{proposition}
Denoting the expectation with respect to the law $\prob_t^c$ by
$\Ex_t^c$, proposition \ref{proposition:free_energy} entails that
$\lim_{t\uparrow\infty}(1/t)\ln\Ex_t^c[e^{\varphi(W_t)}]=z(\varphi)-z(0)$
for all $\varphi\in\rew^\star$, so that $z-z(0)$ is exactly the scaled
cumulant generating function of $W_t$ within the CPM.  We stress that
the number $z(0)$ is finite. Indeed, we have $\Ex[e^{v(S_1)-\zeta
    S_1}\mathds{1}_{\{S_1<\infty\}}]>1$ for all sufficiently negative
$\zeta$ as $\prob[S_1<\infty]>0$ by assumption \ref{ass1} and, at the
same time, $\Ex[e^{v(S_1)-\zeta
    S_1}\mathds{1}_{\{S_1<\infty\}}]=\sum_{s\ge 1}e^{v(s)-\zeta
  s}p(s)\le 1$ for all $\zeta\ge z_o+\ln2$, $z_o$ being the number
introduced by assumption \ref{ass2}. The function $z$ is finite
everywhere in the following case, which is relevant for Statistical
Mechanics as it comprises the macroscopic observables that enter the
thermodynamic description of the system \cite{Models}.
\begin{example}
\label{zfintutto}
The function $z$ is finite everywhere if the reward $X_1$ is dominated
by the waiting time $S_1$ in the sense that $\|X_1\|\le M S_1$ with
full probability for some constant $M<+\infty$. This follows from the
facts that, for any given $\varphi\in\rew^\star$,
$\Ex[e^{\varphi(X_1)+v(S_1)-\zeta
    S_1}\mathds{1}_{\{S_1<\infty\}}]\ge\Ex[e^{-M\|\varphi\|S_1+v(S_1)-\zeta
    S_1}\mathds{1}_{\{S_1<\infty\}}]>1$ for all sufficiently negative
$\zeta$, as $\prob[S_1<\infty]>0$ by assumption \ref{ass1}, and
$\Ex[e^{\varphi(X_1)+v(S_1)-\zeta
    S_1}\mathds{1}_{\{S_1<\infty\}}]\le\sum_{s\ge
  1}e^{M\|\varphi\|s+v(s)-\zeta s}p(s)\le 1$ for all $\zeta\ge
z_o+M\|\varphi\|+\ln2 $ with $z_o$ given by assumption \ref{ass2}.
\end{example}

We use the function $z$ to construct a rate function.  Let $I$ be the
Fenchel-Legendre transform of $z-z(0)$, which associates every point
$w\in\rew$ with the extended real number $I(w)$ given by
\begin{equation}
I(w):=\sup_{\varphi\in\rew^\star}\Big\{\varphi(w)-z(\varphi)+z(0)\Big\}.
\label{defIorate}
\end{equation}
The following theorem extends Cram\'er's theorem to the cumulative
reward $W_t$ with respect to the CPM and constitutes our first main
result. It is proved together with proposition
\ref{proposition:free_energy} in Section \ref{sec:constrainedmodels}.
\begin{theorem}
\label{mainth1}
  The following conclusions hold:
  \begin{enumerate}[{\upshape(a)}]
  \item the function $I$ is lower semicontinuous and proper convex;
  \item if $G\subseteq\rew$ is open, then
    \begin{equation*}
      \liminf_{t\uparrow\infty}\frac{1}{t}\ln\prob_t^c\bigg[\frac{W_t}{t}\in G\bigg]\ge -\inf_{w\in G}\{I(w)\};
    \end{equation*}
\item if $F\subseteq\rew$ is compact, open convex, closed convex, or
  any convex set in $\Brew$ when $\rew$ is finite-dimensional, then
\begin{equation*}
      \limsup_{t\uparrow\infty}\frac{1}{t}\ln\prob_t^c\bigg[\frac{W_t}{t}\in F\bigg]\le-\inf_{w\in F}\{I(w)\}.
\end{equation*}
Furthermore, if $\rew$ is finite-dimensional, then this bound is valid
for any closed set $F$ provided that $\Ex[e^{\xi\|X_1\|+v(S_1)-\zeta
    S_1}\mathds{1}_{\{S_1<\infty\}}]<+\infty$ for some numbers
$\zeta\ge 0$ and $\xi>0$.
\end{enumerate}
\end{theorem}
The lower bound in part (b) and the upper bound in part (c) are
called, respectively, {\it large deviation lower bound} and {\it large
  deviation upper bound} \cite{DemboBook,Hollander}. When a lower
semicontinuous function $I$ exists so that the large deviation lower
bound holds for each open set $G$ and the large deviation upper bound
holds for each compact set $F$, then $W_t$ is said to satisfy a {\it
  weak large deviation principle} (weak LDP) with {\it rate function}
$I$ \cite{DemboBook,Hollander}. If the large deviation upper bound
holds more generally for every closed set $F$, then $W_t$ is said to
satisfy a {\it full large deviation principle} (full LDP)
\cite{DemboBook,Hollander}.  Theorem \ref{mainth1} states that the
cumulative reward $W_t$ satisfies a weak LDP with rate function $I$
given by (\ref{defIorate}) within the CPM.  If in addition $\rew$ is
finite-dimensional and the exponential moment condition
$\Ex[e^{\xi\|X_1\|+v(S_1)-\zeta
    S_1}\mathds{1}_{\{S_1<\infty\}}]<+\infty$ is fulfilled for some
$\zeta\ge 0$ and $\xi>0$, as certainly occurs in example
\ref{zfintutto} for any $\xi>0$ and $\zeta>M\xi+z_o$, then $W_t$
satisfies a full LDP.  Regarding the validity of a full LDP for
general infinite-dimensional Banach spaces $\rew$, finding sufficient
conditions is a harder problem that will be the focus of future
studies. Trying to sketch an analogy with Cram\'er's theorem and the
work by Donsker and Varadhan \cite{Donsker}, one should probably
investigate situations where there exists $\zeta\ge 0$ such that
$\Ex[e^{\xi\|X_1\|+v(S_1)-\zeta
    S_1}\mathds{1}_{\{S_1<\infty\}}]<+\infty$ for all $\xi>0$.

Let us move now to the PM, where there is no constraint on the last
monomer. At variance with the CPM, the scaled cumulant generating
function of $W_t$ may not exist in the PM, but the following
proposition, which is proved in Section \ref{sec:freemodels}, shows
that at least some bounds hold true. Set
$\elli:=\liminf_{t\uparrow\infty}(1/t)\ln\prob[S_1>t]$ and
$\ells:=\limsup_{t\uparrow\infty}(1/t)\ln\prob[S_1>t]$, and bear in
mind that $-\infty\le\elli\le\ells\le 0$.
\begin{proposition}
\label{proposition:freefree_energy}  
The following bounds hold for all $\varphi\in\rew^\star$:
  \begin{eqnarray}
    \nonumber
    z(\varphi)\vee\elli&\le&\liminf_{t\uparrow\infty}\frac{1}{t}\ln\Ex\big[e^{\varphi(W_t)+H_t}\big]\\
    \nonumber
  &\le&\limsup_{t\uparrow\infty}\frac{1}{t}\ln\Ex\big[e^{\varphi(W_t)+H_t}\big]\le z(\varphi)\vee\ells.
\end{eqnarray}
\end{proposition}
Denoting by $\Ex_t$ the expectation with respect to $\prob_t$,
proposition \ref{proposition:freefree_energy} entails that the limit
$\lim_{t\uparrow\infty}(1/t)\ln\Ex_t[e^{\varphi(W_t)}]$ exists, and
equals $z(\varphi)\vee\ells-z(0)\vee\ells$, if either $\elli=\ells$ or
$z(\varphi)\ge\ells$. Thus, the scaled cumulant generating function of
$W_t$ with respect to the PM is defined if either $\elli=\ells$, which
includes the case $\ells=-\infty$, or the condition
$z(\varphi)\ge\ells>-\infty$ is met for all $\varphi\in\rew^\star$, as
in the following example.
\begin{example}
\label{sublinearity}
  The bound $z(\varphi)\ge\ells>-\infty$ holds for all
$\varphi\in\rew^\star$ if $\prob[S_1<\infty]=1$,
$\liminf_{s\uparrow\infty}v(s)/s=0$, and there exists a positive real
function $g$ on $\{1,2,\ldots\}\cup\{\infty\}$ such that
$\lim_{s\uparrow\infty}g(s)/s=0$ and $\|X_1\|\le g(S_1)$ with full
probability. Indeed, given any $\zeta<\ells$, under these hypotheses
one can find $\epsilon>0$ such that $\zeta+\epsilon<\ells\le 0$ and
$-\|\varphi\|g(s)+v(s)\ge-\epsilon s$ for all sufficiently large
$s$. Then, $\Ex[e^{\varphi(X_1)+v(S_1)-\zeta
    S_1}\mathds{1}_{\{S_1<\infty\}}]\ge\sum_{s\ge
  1}e^{-\|\varphi\|g(s)+v(s)-\zeta
  s}p(s)\ge\sum_{s>t}e^{-(\zeta+\epsilon)s}p(s)\ge
e^{-(\zeta+\epsilon)t}\prob[S_1>t]$ for all sufficiently large $t$ as
$\prob[S_1=\infty]=0$. It follows that
$\Ex[e^{\varphi(X_1)+v(S_1)-\zeta
    S_1}\mathds{1}_{\{S_1<\infty\}}]=+\infty$ since
$\zeta+\epsilon<\ells$, which results in $z(\varphi)\ge\ells$
according to definition (\ref{defzetafun}).
\end{example}

In order to establish large deviation bounds with respect to the PM,
it is convenient to distinguish the case $\ells=-\infty$ from the case
$\ells>-\infty$. The following theorem, which represents our second
main result, provides weak and full LDPs for the renewal-reward
process $t\mapsto W_t$ with respect to the PM when $\ells=-\infty$.
The proof is given in Section \ref{sec:freemodels}.
\begin{theorem}
\label{mainth2}
Assume $\ells=-\infty$. The following conclusions hold:
\begin{enumerate}[{\upshape(a)}]
\item if $G\subseteq\rew$ is open, then
\begin{equation*}
\liminf_{t\uparrow\infty}\frac{1}{t}\ln\prob_t\bigg[\frac{W_t}{t}\in G\bigg]\ge -\inf_{w\in G}\big\{I(w)\big\};
\end{equation*}
\item if $F\subseteq\rew$ is compact, then
\begin{equation*}
\limsup_{t\uparrow\infty}\frac{1}{t}\ln\prob_t\bigg[\frac{W_t}{t}\in F\bigg]\le -\inf_{w\in F}\big\{I(w)\big\}.
\end{equation*}
If $F\subseteq\rew$ is open convex, closed convex, or any convex set
in $\Brew$ when $\rew$ is finite-dimensional, then this bound is valid
whenever $I(0)<+\infty$. Furthermore, if $\rew$ is finite-dimensional,
then it is valid for any closed set $F$ provided that
$\Ex[e^{\xi\|X_1\|+v(S_1)-\zeta
    S_1}\mathds{1}_{\{S_1<\infty\}}]<+\infty$ for some numbers
$\zeta\ge 0$ and $\xi>0$.
\end{enumerate}
\end{theorem}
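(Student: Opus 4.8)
The plan is to compare the pinning model with the constrained pinning model by conditioning on the position $g_t$ of the last renewal not exceeding $t$, so that $g_t\in\{0,1,\ldots,t\}$ and $W_t=W_{g_t}$, $H_t=H_{g_t}$. Partitioning $\{W_t\in B\}$ according to the value of $g_t$ and using the renewal property give, for every $B\in\Brew$ and every integer $t\ge 1$,
\begin{equation*}
Z_t\,\prob_t[W_t\in B]=\mathds{1}_{\{0\in B\}}\,\prob[S_1>t]+\sum_{s=1}^{t}Z_s^c\,\prob_s^c[W_s\in B]\,\prob[S_1>t-s].
\end{equation*}
Taking $B=\rew$ and using that $\tfrac1s\ln Z_s^c\to z(0)$ with $z(0)$ finite --- a by-product of the analysis of Section~\ref{sec:constrainedmodels}, being nothing but the statement that the exponent in~(\ref{defzetafun}) is the growth rate of $Z_s^c$ --- I first deduce $\tfrac1t\ln Z_t\to z(0)$: indeed $\ells=-\infty$ forces $\prob[S_1>r]$ to decay faster than every exponential, so that $\sum_{r\ge 0}e^{\beta r}\prob[S_1>r]<\infty$ for all $\beta\in\Rl$, and the same fact shows that in the identity above only the indices $s$ in a window $((1-\eta)t,t]$ matter on the exponential scale, for each $\eta>0$, the term $s=0$ and the terms with $s\le(1-\eta)t$ being negligible because then $t-s\ge\eta t$.

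The lower bound in part (a) is immediate: keeping only the term $s=t$ in the identity with $B=tG$ gives $\prob_t[W_t\in tG]\ge (Z_t^c/Z_t)\,\prob_t^c[W_t\in tG]$, and taking $\tfrac1t\ln$, letting $t\uparrow\infty$, and using $\tfrac1t\ln Z_t^c,\tfrac1t\ln Z_t\to z(0)$ together with Theorem~\ref{mainth1}(b) yields $\liminf_{t\uparrow\infty}\tfrac1t\ln\prob_t[W_t\in tG]\ge-\inf_{w\in G}\{I(w)\}$ with no extra hypothesis.

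For the upper bounds in part (b) the crux is that on $\{g_t=s\}$ one has $W_t/t=(s/t)\,(W_s/s)$, hence $\{W_t\in tF\}=\{W_s/s\in(t/s)F\}$ with $t/s$ close to $1$ when $s$ lies in a window $((1-\eta)t,t]$. I enclose the whole family $\{(t/s)F:s\in((1-\eta)t,t]\}$ in a single fixed set $F_\eta$ of a type admitted by Theorem~\ref{mainth1}(c) that collapses onto $F$ as $\eta\downarrow 0$: if $F$ is compact, or convex and bounded, $F_\eta$ is a closed neighbourhood of $F$ of radius shrinking to $0$, itself compact or closed convex; if $F$ is convex and unbounded, one first intersects $F$ with a large ball and disposes of the complementary piece by exponential tightness --- which, in finite dimension, follows from $z$ finite near the origin through $\prob_t[\varphi(W_t)>\rho t]\le e^{-\rho t}\Ex_t[e^{\varphi(W_t)}]$ and $\tfrac1t\ln\Ex_t[e^{\varphi(W_t)}]\to z(\varphi)-z(0)$ (the latter got from the identity applied to the tilted partition function) --- thereby reducing to the bounded case; and when $\rew$ is finite-dimensional an arbitrary closed $F$ is split likewise into the compact part $F\cap\overline{B(0,R)}$ and its complement. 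Applying Theorem~\ref{mainth1}(c) to the fixed set $F_\eta$ makes the bound $\prob_s^c[W_s\in tF]\le e^{-s(\inf_{F_\eta}I-\epsilon)}$ hold uniformly in $s\in((1-\eta)t,t]$ for all large $t$; inserting it into the identity, using $\tfrac1s\ln Z_s^c\to z(0)$ and dividing by $Z_t$ gives $\limsup_{t\uparrow\infty}\tfrac1t\ln\prob_t[W_t\in tF]\le-\inf_{F_\eta}I+o_\eta(1)$, and letting $\eta\downarrow 0$ the right-hand side tends to $-\inf_{w\in F}\{I(w)\}$. For compact and for bounded convex $F$ this last passage rests only on the lower semicontinuity of $I$ from Theorem~\ref{mainth1}(a) (and on convexity of $F$), whereas for unbounded convex $F$ it also invokes the convex-analytic fact $\lim_{\lambda\downarrow 1}\inf_{\lambda F}I=\inf_{F}I$, and it is here that $I(0)<\infty$ is used: convexity of $I$ gives $I(\lambda w)\ge\lambda I(w)-(\lambda-1)I(0)$, whence $\inf_{\lambda F}I\ge\lambda\inf_{F}I-(\lambda-1)I(0)\to\inf_{F}I$.

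The genuine difficulty, I expect, is this enclosure step with its uniformity: one must freeze the reference set while $(t/s)F$ drifts, and simultaneously keep $\inf_{F_\eta}I$ close to $\inf_{F}I$. For unbounded convex $F$ the neighbourhood enclosure is unavailable, and one instead exploits either that $(t/s)F\subseteq(1+\eta')F$ when $0\in F$ (with $\eta'\to 0$ as $\eta\to 0$), or a reduction of $F$ to its supporting closed half-spaces, on each of which the exponential Markov inequality and the asymptotics of $\tfrac1t\ln\Ex_t[e^{\varphi(W_t)}]$ apply --- in either route leaning on $I(0)<\infty$ and on convexity of $I$. Once this interplay between convexity, the size of $F$, and the window of relevant indices $s$ is settled, together with the routine handling of the $s=0$ and small-$s$ terms and of the degenerate case $\inf_{F}I=\infty$, the argument is a mechanical reduction to Theorem~\ref{mainth1}.
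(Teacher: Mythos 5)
Your overall strategy matches the paper's: derive the conditioning identity decomposing $\{W_t\in B\}$ by the last renewal $g_t\le t$ (the paper's identity (\ref{start_free_sec})), show $\tfrac1t\ln Z_t\to z(0)$ using $\ells=-\infty$, and transfer Theorem~\ref{mainth1} through this identity. The lower bound via the single term $s=t$ is exactly what the paper does, and your handling of compact $F$ and of closed $F$ in finite dimension (split off a compact by the Chernoff bound using $z$ finite near $0$) is sound and parallels the paper's Propositions~\ref{free_compact} and \ref{free_closed}.

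The genuine gap is in the upper bound for an \emph{unbounded} convex $F$ that does not contain the origin, under the hypothesis $I(0)<\infty$ alone. Your three candidate repairs all miss this case: exponential tightness requires $z$ finite in a neighborhood of $0$, which is strictly stronger than $I(0)<\infty$ and is not assumed there; the inclusion $(t/s)F\subseteq(1+\eta')F$ requires $0\in F$; and the supporting-half-space route only produces $\sup_\varphi\inf_{w\in F}\{\varphi(w)-z(\varphi)+z(0)\}$, which is in general strictly smaller than $\inf_{w\in F}\sup_\varphi\{\cdots\}=\inf_F I$, so a minimax argument would still be needed. The bounded-neighbourhood enclosure you describe for $F_\eta$ is simply unavailable when $F$ is unbounded. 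A correct enclosing set is $F_\eta:=\bigcup_{\beta\in[1,\lambda]}\beta F$ with $\lambda=1/(1-\eta)$, which equals the convex hull of $F\cup\lambda F$, is again open (resp.\ closed) convex if $F$ is, and for which your own convexity estimate $I(\beta w)\ge\beta I(w)-(\beta-1)I(0)$ yields $\inf_{F_\eta}I\ge\inf_F I-(\lambda-1)I(0)\to\inf_F I$; but you never actually propose this set. The paper bypasses the enclosure issue entirely: its Lemma~\ref{convex_free} works with the dilated sets $\eta C$ directly, choosing for each $w\in C$ a witness $\varphi_w$ with $\varphi_w(w)-\max\{z(\varphi_w),\ell\}\ge\lambda$ and deducing the pointwise bound $J(\eta w)\ge\eta\lambda+(\eta-1)\ell$, then invoking the super-additivity estimate of Lemma~\ref{lemma_conv} at scale $\eta C$; the hypothesis $I(0)<\infty$ enters only afterwards, via the choice $\ell:=z(0)-I(0)=\inf_\varphi z(\varphi)$ which makes $\max\{z,\ell\}=z$. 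Either of these routes closes the gap, but as stated your argument does not.
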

In general, the large deviation upper bound in part (b) cannot be
extended to convex sets if $\ells=-\infty$ and $I(0)=+\infty$. Examples
with an open convex set and a closed convex set where such bound fails
will be shown at the end of Section \ref{sec:freemodels}.

The case $\ells>-\infty$ is more involved and calls for two rate
functions, $\Ii$ and $\Is$, which are defined for each $w\in\rew$ by
the formulas
\begin{equation}
  \Ii(w):=\sup_{\varphi\in\rew^\star}\Big\{\varphi(w)-z(\varphi)\vee\elli+z(0)\vee\ells\Big\}
\label{def:Ibasso}
\end{equation}
and
\begin{equation}
  \Is(w):=\sup_{\varphi\in\rew^\star}\Big\{\varphi(w)-z(\varphi)\vee\ells+z(0)\vee\elli\Big\}.
\label{def:Ialto}
\end{equation}
The following theorem, which is our third and last main result,
provides large deviation bounds with respect to the PM when
$\ells>-\infty$.  The proof is reported in Section
\ref{sec:freemodels}.
\begin{theorem}
\label{mainth3}
Assume $\ells>-\infty$. The following conclusions hold:
\begin{enumerate}[{\upshape(a)}]
\item the functions $\Ii$ and $\Is$ are lower
  semicontinuous and proper convex;
\item if $G\subseteq\rew$ is open, then
\begin{equation*}
\liminf_{t\uparrow\infty}\frac{1}{t}\ln\prob_t\bigg[\frac{W_t}{t}\in G\bigg]\ge -\inf_{w\in G}\big\{\Ii(w)\big\};
\end{equation*}
\item if $F\subseteq\rew$ is compact, open convex, closed convex, or
  any convex set in $\Brew$ when $\rew$ is finite-dimensional, then
\begin{equation*}
\limsup_{t\uparrow\infty}\frac{1}{t}\ln\prob_t\bigg[\frac{W_t}{t}\in F\bigg]\le -\inf_{w\in F}\big\{\Is(w)\big\}.
\end{equation*}
Furthermore, if $\rew$ is finite-dimensional, then this bound is valid
for any closed set $F$ provided that $\Ex[e^{\xi\|X_1\|+v(S_1)-\zeta
    S_1}\mathds{1}_{\{S_1<\infty\}}]<+\infty$ for some numbers
$\zeta\ge 0$ and $\xi>0$.
\end{enumerate}
\end{theorem}
Theorem \ref{mainth3} states that the renewal-reward process $t\mapsto
W_t$ satisfies a weak LDP with rate function $\Is$ within the PM
provided that $\Ii=\Is$. The exponential moment condition
$\Ex[e^{\xi\|X_1\|+v(S_1)-\zeta
    S_1}\mathds{1}_{\{S_1<\infty\}}]<+\infty$ for some $\zeta\ge 0$
and $\xi>0$ gives a full LDP with rate function $\Is$ when $\rew$ is
finite-dimensional and $\Ii=\Is$. We have $\Ii=\Is$ if $\elli=\ells$,
as expected in most applications, or if the condition
$z(\varphi)\ge\ells$ is fulfilled for all $\varphi\in\rew^\star$, as
in example \ref{sublinearity}. In the latter case, $\Ii=\Is=I$.

\subsection{Discussion}

Large deviations for renewal-reward processes have been investigated
by many authors over the past decades. Their attention has been
focused on both discrete-time and continuous-time frameworks and, in
most cases, on rewards taking real values. In order to fix ideas, when
talking about renewal systems in the domain of time we think of a PM
with waiting times satisfying $\prob[S_1<\infty]=1$ and potential
$v=0$.  An almost omnipresent hypothesis in previous works is the
Cram\'er condition $\Ex[e^{\xi\|X_1\|+\xi S_1}]<+\infty$ for some
number $\xi>0$.

The simplest example of renewal-reward process has unit rewards and
corresponds to the counting renewal process $t\mapsto N_t:=\sum_{i\ge
  1}\mathds{1}_{\{T_i\le t\}}$. Glynn and Whitt \cite{Glynn1994}
investigated the connection between LDPs of the inverse processes
$t\mapsto N_t$ and $i\mapsto T_i$, providing a full LDP for $N_t$
under the Cram\'er condition. This condition was later relaxed by
Duffield and Whitt \cite{Duffield1998}. Jiang \cite{Jiang1994} studied
the large deviations of the extended counting renewal process
$t\mapsto\sum_{i\ge 1}\mathds{1}_{\{T_i\le i^\alpha t\}}$ with
$\alpha\in[0,1)$ under the Cram\'er condition.  Glynn and Whitt
  \cite{Glynn1994} and Duffield and Whitt \cite{Duffield1998},
  together with Puhalskii and Whitt \cite{Puhalskii1997}, also
  investigated the connection between sample-path LDPs of the
  processes $t\mapsto N_t$ and $i\mapsto T_i$ under the Cram\'er
  condition.

Starting from sample-path LDPs of inverse and compound processes,
Duffy and Rodgers-Lee \cite{Duffy2004} sketched a full LDP for
renewal-reward processes with real rewards by means of the contraction
principle under the stringent exponential moment condition
$\Ex[e^{\xi\|X_1\|+\xi S_1}]<+\infty$ for all $\xi>0$. Some full LDPs
for real renewal-reward processes were later proposed by Macci
\cite{Macci2005,Macci2007} under existence and essentially smoothness
of the scaled cumulant generating function, which allow for an
application of the G\"artner-Ellis theorem \cite{DemboBook,Hollander}.
Essentially smoothness of the scaled cumulant generating function has
been recently relaxed by Borovkov and Mogulskii
\cite{Borovkov2015,Borovkov2019}, which used the Cram\'er's theorem to
establish a full LDP under the Cram\'er condition. Under the Cram\'er
condition, they \cite{Borovkov2016_1,Borovkov2016_2,Borovkov2016_3}
have also obtained sample-path LDPs for real renewal-reward processes.

A different approach based on empirical measures has been considered
by Lefevere, Mariani, and Zambotti \cite{Lefevere2011}, which have
investigated large deviations for the empirical measures of forward
and backward recurrence times associated with a renewal process, and
have then derived by contraction a full LDP for renewal-rewards
processes with rewards determined by the waiting times: $X_i:=f(S_i)$
for each $i$ with a bounded real function $f$. Later, Mariani and
Zambotti \cite{Mariani2014} have developed a renewal version of
Sanov's theorem by studying the empirical law of rewards that take
values in a generic Polish space. By appealing to the contraction
principle, this result could give a full LDP for a renewal-reward
process with rewards valued in a real separable Banach space, but only
provided that the strong exponential moment condition
$\Ex[e^{\xi\|X_1\|}]<+\infty$ for all $\xi>0$ is satisfied as
discussed by Schied \cite{Schied1998}.

We conclude this brief review of previous contributions by mentioning
that a moderate deviation principle for real renewal-reward processes
was obtained by Tsirelson \cite{Tsirelson2013} under an exponential
moment condition.  Exact asymptotics for the counting renewal process
and real renewal-reward processes has been investigated under the
Cram\'er condition and several additional smoothness hypotheses by
Serfozo \cite{Serfozo1974}, Kuczek and Crank \cite{Kuczek1991}, Chi
\cite{Chi2007}, and Borovkov and Mogulskii
\cite{Borovkov2018_1,Borovkov2018_2}.

Previous works leave open the question of whether some large deviation
principles free from exponential moment conditions can be established
for renewal-reward processes, in the wake of the sharp version of
Cram\'er's theorem demonstrated by Bahadur and Zabell \cite{BaZa}. The
present paper gives a positive answer to this question at the price of
restricting to the discrete-time framework. Indeed, through theorems
\ref{mainth1}, \ref{mainth2}, and \ref{mainth3} we supply weak LDPs
and large deviation upper bounds for measurable convex sets that are
completely free from hypotheses. Moreover, when finite-dimensional
rewards are considered, and when $\prob[S_1<\infty]=1$ and $v=0$ to
make a comparison with previous studies, we provide full LDPs under
the exponential moment condition $\Ex[e^{\xi\|X_1\|-\zeta
    S_1}]<+\infty$ for some numbers $\zeta\ge 0$ and $\xi>0$, which is
weaker than the Cram\'er condition $\Ex[e^{\xi\|X_1\|+\xi
    S_1}]<+\infty$ for some $\xi>0$.  For instance, rewards of example
\ref{zfintutto} that define the macroscopic observables of Statistical
Mechanics \cite{Models} always satisfy our weak exponential moment
condition, whereas in general they do not fulfill the Cram\'er
condition.

In order to drop exponential moment conditions, a novel approach with
respect to past methods had to be devised to tackle the problem, and a
new approach was suggested to us by the theory of polymer pinning
\cite{Giac,Hollander_Polymer}. This new approach is based on
super-additivity, but requires discrete time to be implemented. It
came from here the need to focus on the discrete-time framework.  In
such framework, conditioning on the event that the last time is a
renewal time is a meaningful procedure and enables a super-additivity
property of renewal-reward processes to emerge.  This procedure
introduces a constrained model similarly to what is done with
polymers.  This way, we were able to find a successful strategy for
investigating large deviations and we were naturally led to link
renewal-reward processes to the PM and the CPM. Importantly, the CPM
is not a merely mathematical tool to tackle the PM, but it also
represents the renewal models of Statistical Mechanics \cite{Models},
such as the Poland-Scheraga model, the Fisher-Felderhof model, the
Wako-Sait\^o-Mu\~noz-Eaton model, and the Tokar-Dreyss\'e model.  In
this respect, the large deviation theory developed in this paper must
be added to those already existing for other models of Statistical
Mechanics, including the Curie-Weiss model \cite{Ellisbook}, the
Curie-Weiss-Potts model \cite{CoElTou}, the mean-field
Blume-Emery-Griffiths model \cite{ElOtTou}, and, to some extent, the
Ising model as well as general Gibbs measures relative to an
interaction potential \cite{El,FoOr,Ol,Geo}.

Going back for a moment to the domain of time with
$\prob[S_1<\infty]=1$ and $v=0$, it is interesting to point out that
Duffy and Rodgers-Lee \cite{Duffy2004}, Lefevere, Mariani, and
Zambotti \cite{Lefevere2011}, and Borovkov and Mogulskii
\cite{Borovkov2015,Borovkov2019} found, with increasing level of
generality, an apparently different rate function. They constructed
the rate function for renewal-reward processes from the Cram\'er rate
function $\LC$ of the waiting time and reward pair $(S_1,X_1)$,
defined for each pair $(\beta,w)\in\Rl\times\rew$ by
\begin{equation*}
\LC(\beta,w):=\sup_{(\zeta,\varphi)\in\Rl\times\rew^\star}\Big\{\varphi(w)-\beta\zeta-\ln\Ex\big[e^{\varphi(X_1)-\zeta S_1}\big]\Big\}.
\end{equation*}
Starting from $\LC$, they considered the function
$\inf_{\gamma>0}\{\gamma\LC(\cdot/\gamma,\cdot/\gamma)\}$, whose
lower-semicontinuous regularization $\Upsilon$ is given for every
$(\beta,w)\in\Rl\times\rew$ by
\begin{equation*}
  \Upsilon(\beta,w):=\adjustlimits\lim_{\delta\downarrow 0}\inf_{\alpha\in (\beta-\delta,\beta+\delta)}\inf_{u\in B_{w,\delta}}\inf_{\gamma>0}
  \big\{\gamma\LC(\alpha/\gamma,u/\gamma)\big\}.
\end{equation*}
Here $B_{w,\delta}:=\{u\in\rew:\|u-w\|<\delta\}$ is the open ball of
center $w$ and radius $\delta$.  Duffy and Rodgers-Lee
\cite{Duffy2004} dealt with the case $\rew=\Rl$ and $\ells=-\infty$
under a strong exponential moment condition, obtaining the rate
function $\Lambda:=\Upsilon(1,\cdot\,)$.  In this case we have the
rate function $I$ by theorem \ref{mainth2} with
$z(\varphi)=\inf\{\zeta\in\Rl:\Ex[e^{\varphi(X_1)-\zeta S_1}]\le 1\}$
for all $\varphi$ as $S_1<\infty$ with full probability and $v=0$.
Lefevere, Mariani, and Zambotti \cite{Lefevere2011} too found the rate
function $\Lambda$ when $X_1=f(S_1)$ with a bounded real function
$f$. This instance falls under the umbrella of example
\ref{sublinearity}, and so we get again the rate function $I$ by
theorem \ref{mainth3}. Borovkov and Mogulskii
\cite{Borovkov2015,Borovkov2019} studied the case $\rew=\Rl$ under the
Cram\'er condition. They obtained the rate function $\Lambda$ when
$\ells=-\infty$ and the rate function
$\Lambdas:=\inf_{\beta\in[0,1]}\{\Upsilon(\beta,\cdot)-(1-\beta)\ells\}$
when $\elli=\ells>-\infty$ or $z(\varphi)\ge\ells>-\infty$ for all
$\varphi\in\rew^\star$. In these cases we have the rate functions $I$
and $\Is$, respectively, by theorem \ref{mainth2} and \ref{mainth3}.
Despite different expressions, our results are consistent with all the
findings of these authors.  Indeed, while uniqueness of the rate
function \cite{DemboBook,Hollander} suggests that there is at least
some situation where $I=\Lambda$ and $\Is=\Lambdas$, a direct
comparison shows that these identities hold in general, as established
by the following lemma which is proved in \ref{proof:confrontoIL}.
\begin{lemma}
  \label{confrontoIL}
  Assume that $\prob[S_1<\infty]=1$ and that $v=0$. Then, the
  following conclusions hold for every $w\in\rew$:
  \begin{enumerate}[{\upshape(a)}]
  \item $I(w)=\Lambda(w):=\Upsilon(1,w)$;
  \item $\Is(w)=\Lambdas(w):=\inf_{\beta\in[0,1]}\{\Upsilon(\beta,w)-(1-\beta)\ells\}$ provided that $\ells>-\infty$.
    \end{enumerate}
\end{lemma}

As a final remark, we stress that Borovkov and Mogulskii
\cite{Borovkov2015,Borovkov2019} opted for not introducing two
different rate functions for the large deviation lower and upper
bounds, thus considering only problems where $\Ii=\Is$. At variance
with them, we decided to provide optimal large deviation bounds with
possibly different rate functions in order to even address situations
where the tail of the waiting time distribution is very
oscillating. For instance, a physical renewal model giving rise to two
possibly different rate functions has been found by Lefevere, Mariani,
and Zambotti \cite{Lefevere2011_2,Lefevere2012} in the description of
a free particle interacting with a heat bath.

\section{Proof of proposition \ref{proposition:free_energy} and theorem \ref{mainth1}}
\label{sec:constrainedmodels}

We prove proposition \ref{proposition:free_energy} and theorem
\ref{mainth1} as follows. In Section \ref{par:existence} we show the
existence of a weak LDP with a convex rate function.  This is the step
where convexity and super-additivity arguments come into play. In
Section \ref{par:renseq} we introduce the generalized renewal equation
formalism, which allows us to express the scaled cumulative generating
function in terms of the function $z$ defined by
(\ref{defzetafun}). Then, we use this formalism in Section
\ref{par:cI} to also relate the rate function to $z$. Finally, in
Section \ref{par:th1pp} we summarize the results linking them to
proposition \ref{proposition:free_energy} and to parts (a), (b), and
(c) of theorem \ref{mainth1}.

Our theory of large deviations take advantage of the fact that a
renewal process forgets the past and starts over at every renewal.
Concretely, this means that
$(U_{\tau+t},\Delta_\tau^tH,\Delta_\tau^tW)_{t\ge 1}$ with
$\Delta_\tau^tH:=H_{\tau+t}-H_\tau$ and
$\Delta_\tau^tW:=W_{\tau+t}-W_\tau$ is independent of
$(H_\tau,W_\tau)$ and distributed as $(U_t,H_t,W_t)_{t\ge 1}$
conditional on the event that a given integer $\tau\ge 1$ is a
renewal, namely
\begin{eqnarray}
\nonumber
\Ex\Big[\mathds{1}_{\{(H_\tau,W_\tau)\in\cdot\}}\mathds{1}_{\{(U_{\tau+t},\Delta_\tau^tH,\Delta_\tau^tW)_{t\ge 1}\in\star\}}U_\tau\Big]
&=&\Ex\Big[\mathds{1}_{\{(H_\tau,W_\tau)\in\cdot\}}U_\tau\Big]\\
  &\cdot&\Ex\Big[\mathds{1}_{\{(U_t,H_t,W_t)_{t\ge 1}\in\star\}}\Big].
\label{start_0}
\end{eqnarray}
A formal proof of (\ref{start_0}) can be drawn by noticing that if
$\tau=T_n$ for some positive integer $n$, then $T_i\le\tau$ for each
$i\le n$ and $T_i>\tau$ for any $i>n$. It follows that
$H_\tau=\sum_{i=1}^nv(S_i)$ and $W_\tau=\sum_{i=1}^nX_i$, so that the
random vector $(H_\tau,W_\tau)$ depends only on
$(S_1,X_1),\ldots,(S_n,X_n)$. At the same time, for any $t\ge 1$ we
have $U_{\tau+t}=\sum_{i\ge n+1}\mathds{1}_{\{T_i=\tau+t\}}=\sum_{i\ge
  1}\mathds{1}_{\{S_{n+1}+\cdots+S_{n+i}=t\}}$,
$\Delta_\tau^tH=\sum_{i\ge n+1}v(S_i)\mathds{1}_{\{T_i\le
  \tau+t\}}=\sum_{i\ge 1}
v(S_{n+i})\mathds{1}_{\{S_{n+1}+\cdots+S_{n+i}\le t\}}$, and
analogously $\Delta_\tau^tW=\sum_{i\ge n+1}X_i\mathds{1}_{\{T_i\le
  \tau+t\}}=\sum_{i\ge 1}
X_{n+i}\mathds{1}_{\{S_{n+1}+\cdots+S_{n+i}\le t\}}$, showing that the
vector $(U_{\tau+t},\Delta_\tau^tH,\Delta_\tau^tW)$ depends only on
$(S_{n+1},X_{n+1}),(S_{n+2},X_{n+2}),\ldots$ through the same formula
that connects $(U_t,H_t,W_t)$ to $(S_1,X_1),(S_2,X_2),\ldots$.

\subsection{Weak LDP in the constrained setting}
\label{par:existence}

We leave the normalizing constant $Z_t^c$ aside for the moment and
focus on the measure $\mu_t$ over $\Brew$ defined for each time $t\ge
1$ by
\begin{equation*}
\mu_t:=\Ex\bigg[\mathds{1}_{\big\{\frac{W_t}{t}\in \cdot\big\}} U_t e^{H_t}\bigg].
\end{equation*}
We have $\mu_t(\rew)=\Ex[U_t e^{H_t}]=Z_t^c>0$ for all $t>t_c$ and
some $t_c\ge 0$ thanks to assumption \ref{ass1} about aperiodicity, as
we have seen at the end of Section \ref{sec:basics}.
Of fundamental importance is the following super-multiplicativity
property, which is not fulfilled by
$\prob_t^c[W_t/t\in\cdot\,\,]=\mu_t/Z_t^c$ precisely because of
normalization.
\begin{lemma}
  Let $C\in\Brew$ be convex and let $\tau\ge 1$ and $t\ge 1$ be two
  integers. Then, $\mu_{\tau+t}(C)\ge\mu_\tau(C)\cdot\mu_t(C)$.
\end{lemma}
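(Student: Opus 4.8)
The plan is to exploit the regeneration identity \eqref{start_0} together with the convexity of $C$ and the fact that $W_t/t$ is an average. The key observation is that on the event where $\tau$ is a renewal, we have the decomposition $W_{\tau+t}=W_\tau+\Delta_\tau^tW$, $H_{\tau+t}=H_\tau+\Delta_\tau^tH$, and $U_{\tau+t}=\sum_{i\ge1}\mathds{1}_{\{T_i=\tau+t\}}$ counts the renewals of the shifted process at time $t$; moreover $U_\tau\cdot U_{\tau+t}$ is exactly the product of the renewal indicator at $\tau$ and that of the shifted process at time $t$. So I would begin by writing, for any integers $\tau,t\ge1$,
\begin{equation*}
\mu_{\tau+t}(C)\ge\Ex\bigg[\mathds{1}_{\big\{\frac{W_{\tau+t}}{\tau+t}\in C\big\}}\,U_\tau\,U_{\tau+t}\,e^{H_{\tau+t}}\bigg],
\end{equation*}
where the inequality comes from dropping all contributions to $U_{\tau+t}$ other than those renewal chains passing through $\tau$ — equivalently, from inserting the factor $U_\tau\le 1$ when $U_{\tau+t}\ge1$ forces at most one renewal at $\tau+t$; here one uses that $U_\tau U_{\tau+t}$ equals the indicator that both $\tau$ and $\tau+t$ are renewals, which is dominated by $U_{\tau+t}$.

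Next I would bound the event: since $\frac{W_{\tau+t}}{\tau+t}=\frac{\tau}{\tau+t}\cdot\frac{W_\tau}{\tau}+\frac{t}{\tau+t}\cdot\frac{\Delta_\tau^tW}{t}$ is a convex combination of $W_\tau/\tau$ and $\Delta_\tau^tW/t$, convexity of $C$ gives
\begin{equation*}
\bigg\{\frac{W_\tau}{\tau}\in C\bigg\}\cap\bigg\{\frac{\Delta_\tau^tW}{t}\in C\bigg\}\subseteq\bigg\{\frac{W_{\tau+t}}{\tau+t}\in C\bigg\}.
\end{equation*}
Combining this with the multiplicative splitting $e^{H_{\tau+t}}=e^{H_\tau}e^{\Delta_\tau^tH}$ and the factorization $U_\tau U_{\tau+t}$, the right-hand side above is at least
\begin{equation*}
\Ex\bigg[\mathds{1}_{\big\{\frac{W_\tau}{\tau}\in C\big\}}\mathds{1}_{\big\{\frac{\Delta_\tau^tW}{t}\in C\big\}}\,U_\tau\,U_{\tau+t}\,e^{H_\tau}e^{\Delta_\tau^tH}\bigg].
\end{equation*}
Now apply the regeneration identity \eqref{start_0} with the choices $\{(H_\tau,W_\tau)\in\cdot\}=\{W_\tau/\tau\in C\}$ (a Borel event in the pair, using measurability of $\rew$) and $\{(U_{\tau+t},\Delta_\tau^tH,\Delta_\tau^tW)_{t\ge1}\in\star\}$ picking out, at the single index $t$, the event $\{\Delta_\tau^tW/t\in C\}$ together with the weights $U_{\tau+t}e^{\Delta_\tau^tH}$ — so that the $\star$-factor becomes $\Ex[\mathds{1}_{\{W_t/t\in C\}}U_te^{H_t}]=\mu_t(C)$ and the $\cdot$-factor becomes $\Ex[\mathds{1}_{\{W_\tau/\tau\in C\}}U_\tau e^{H_\tau}]=\mu_\tau(C)$. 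This yields exactly $\mu_{\tau+t}(C)\ge\mu_\tau(C)\cdot\mu_t(C)$.

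The main obstacle is purely bookkeeping rather than conceptual: one must phrase the desired quantities as genuine instances of the abstract identity \eqref{start_0}, i.e. recognize $U_{\tau+t}e^{\Delta_\tau^tH}\mathds{1}_{\{\Delta_\tau^tW/t\in C\}}$ as a measurable functional of the sequence $(U_{\tau+s},\Delta_\tau^sH,\Delta_\tau^sW)_{s\ge1}$ (it depends only on the $s=t$ coordinate) and the weight $U_\tau e^{H_\tau}\mathds{1}_{\{W_\tau/\tau\in C\}}$ as $U_\tau$ times a bounded measurable functional of $(H_\tau,W_\tau)$. Once the events are set up so that \eqref{start_0} applies verbatim, the inequality is immediate. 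A secondary point to check carefully is the very first inequality: that discarding renewal configurations not passing through $\tau$ only decreases the expectation, which holds because all integrands are nonnegative ($U_\tau,U_{\tau+t}\ge0$ and $e^{H_\tau+\Delta_\tau^tH}>0$) and $U_\tau U_{\tau+t}\le U_{\tau+t}$ since $U_\tau\in\{0,1\}$.
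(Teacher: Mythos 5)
Your proof is correct and follows essentially the same route as the paper: decompose $W_{\tau+t}/(\tau+t)$ as a convex combination of $W_\tau/\tau$ and $\Delta_\tau^tW/t$, use convexity of $C$, insert the factor $U_\tau\le 1$, split the Hamiltonian, and factorize via the regeneration identity \eqref{start_0}. The only cosmetic difference is that you insert $U_\tau$ before invoking convexity whereas the paper does it afterward; this has no bearing on the argument.
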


\begin{proof}
Writing $W_{\tau+t}/(\tau+t)=\lambda
W_\tau/\tau+(1-\lambda)\Delta_\tau^tW/t$ with
$\lambda:=\tau/(\tau+t)$, we recognize that $W_{\tau+t}/(\tau+t)\in C$
whenever $W_\tau/\tau\in C$ and $\Delta_\tau^tW/t\in C$ since $C$ is
convex. It follows that
\begin{eqnarray}
  \nonumber
  \mu_{\tau+t}(C)&=&\Ex\bigg[\mathds{1}_{\big\{\frac{W_{\tau+t}}{\tau+t}\in
    C \big\}}U_{\tau+t} e^{H_{\tau+t}}\bigg]\\
\nonumber
  &\ge&\Ex\bigg[\mathds{1}_{\big\{\frac{W_\tau}{\tau}\in
    C \big\}}\mathds{1}_{\big\{\frac{\Delta_\tau^tW}{t}\in C\big\}}
  U_{\tau+t} e^{H_{\tau+t}}\bigg]\\
\nonumber
&=&\Ex\bigg[\mathds{1}_{\big\{\frac{W_\tau}{\tau}\in
    C \big\}}e^{H_\tau}\mathds{1}_{\big\{\frac{\Delta_\tau^tW}{t}\in C\big\}}
  U_{\tau+t} e^{\Delta_\tau^tH}\bigg].
\end{eqnarray}
A looser lower bound is obtained by introducing the renewal indicator
$U_\tau$ with the motivation that
$(U_{\tau+t},\Delta_\tau^tH,\Delta_\tau^tW)$ is independent of
$(U_\tau,H_\tau,W_\tau)$ and distributed as $(U_t,H_t,W_t)$ when
$\tau$ is a renewal. This way, invoking (\ref{start_0}) we find
\begin{eqnarray}
  \nonumber
  \mu_{\tau+t}(C)&\ge&\Ex\bigg[\mathds{1}_{\big\{\frac{W_\tau}{\tau}\in
    C \big\}}U_\tau e^{H_\tau}\mathds{1}_{\big\{\frac{\Delta_\tau^tW}{t}\in C\big\}}
    U_{\tau+t} e^{\Delta_\tau^tH}\bigg]\\
\nonumber
  &=&\Ex\bigg[\mathds{1}_{\big\{\frac{W_\tau}{\tau}\in
    C \big\}}U_\tau e^{H_\tau}\bigg]\cdot\Ex\bigg[\mathds{1}_{\big\{\frac{W_t}{t}\in C\big\}}
  U_t e^{H_t}\bigg]\\
\nonumber
&=&\mu_\tau(C)\cdot\mu_t(C),
\end{eqnarray}
which proves the lemma.
\end{proof}

Super-multiplicativity, which becomes super-additivity once logarithms
are taken, makes it possible to describe in general terms the
exponential decay with $t$ of the measure $\mu_t$. To this purpose, we
denote by $\mathcal{L}$ the extended real function over $\Brew$
defined by the formula
\begin{equation*}
\mathcal{L}:=\sup_{t>t_c}\bigg\{\frac{1}{t}\ln \mu_t\bigg\}.
\end{equation*}
If $C\in\Brew$ is convex, then the super-additivity of $\ln \mu_t(C)$
immediately gives $\limsup_{t\uparrow\infty}(1/t)\ln
\mu_t(C)=\mathcal{L}(C)$. The following lemma improves this result
when $C$ is open as well as convex. Hereafter we denote by
$B_{w,\delta}:=\{u\in\rew:\|u-w\|<\delta\}$ the open ball of center
$w$ and radius $\delta$, which is an example of open convex set.
\begin{lemma}
  \label{start}
  Let $C\subseteq\rew$ be open and convex. Then,
  $\lim_{t\uparrow\infty}(1/t)\ln \mu_t(C)$ exists as an extended real
  number and is equal to $\mathcal{L}(C)$.
\end{lemma}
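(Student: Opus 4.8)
The preceding super-multiplicativity lemma already gives us one half for free: since $(1/t)\ln\mu_t(C)\le\mathcal{L}(C)$ for every $t>t_c$ by definition of $\mathcal{L}(C)$, we have $\limsup_{t\uparrow\infty}(1/t)\ln\mu_t(C)\le\mathcal{L}(C)$, so the whole content of the lemma is the reverse bound $\liminf_{t\uparrow\infty}(1/t)\ln\mu_t(C)\ge\mathcal{L}(C)$ (which then forces the limit to exist). The case $\mathcal{L}(C)=-\infty$ is vacuous, since then $\mu_t(C)=0$ for all $t>t_c$ and the limit is $-\infty$. So I would fix an arbitrary real $\ell<\mathcal{L}(C)$ and aim to prove $\liminf_{t\uparrow\infty}(1/t)\ln\mu_t(C)\ge\ell$; since $\mathcal{L}(C)=\sup_{t>t_c}\{(1/t)\ln\mu_t(C)\}$, there is an integer $\tau>t_c$ with $\mu_\tau(C)>e^{\ell\tau}$, and in particular $C\ne\emptyset$.

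The obstacle is that to pass from an almost-optimal block of length $\tau$ to a general length $t=k\tau+r$ I must splice in a bridging segment of length $r\in[\tau,2\tau)$, and the reward it contributes need not be negligible after division by $t$, because the rewards are unbounded. I would therefore first replace $C$ by a bounded open convex subset with a safety margin, and separately prepare a non-degenerate bridge. Pick $w_o\in C$ and $\rho>0$ with $B_{w_o,\rho}\subseteq C$, and for $\lambda\in(0,1)$ set $C_\lambda:=(1-\lambda)C+\lambda w_o$; this set is open and convex, is contained in $C$, and by convexity of $C$ satisfies $B_{w',\lambda\rho}\subseteq C$ for every $w'\in C_\lambda$. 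Because $C$ is open, $\mathds{1}_{C_\lambda}\to\mathds{1}_C$ pointwise as $\lambda\downarrow0$, and $\mu_\tau$ is a finite measure ($\mu_\tau(\rew)=Z_\tau^c<\infty$ by Assumption~\ref{ass2}), so dominated convergence gives $\mu_\tau(C_\lambda)\to\mu_\tau(C)$, while $\mu_\tau(C_\lambda\cap B_{0,R})\uparrow\mu_\tau(C_\lambda)$ as $R\uparrow\infty$. Hence I can fix $\lambda$ and $R$ so that $C':=C_\lambda\cap B_{0,R}$ is open, convex, contained in $C$, still satisfies $\mu_\tau(C')>e^{\ell\tau}$, and obeys $\|w'\|<R$ and $B_{w',\varepsilon}\subseteq C$ for every $w'\in C'$, where $\varepsilon:=\lambda\rho$. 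For the bridge, every integer $r$ with $\tau\le r<2\tau$ satisfies $r>t_c$, so $\mu_r(\rew)=Z_r^c>0$, and since $\mu_r$ is finite I may pick a single $M<\infty$ with $\mu_r(B_{0,M})>0$ for all these finitely many $r$ simultaneously; put $c:=\min\{\mu_r(B_{0,M}):\tau\le r<2\tau\}>0$.

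Now take $t$ large, write $t=k\tau+r$ with $k\ge1$ and $\tau\le r<2\tau$, and work on the event $\{W_{k\tau}/(k\tau)\in C'\}\cap\{\Delta_{k\tau}^rW/r\in B_{0,M}\}$. There $W_t/t=W_{k\tau}/(k\tau)+(r/t)\big(\Delta_{k\tau}^rW/r-W_{k\tau}/(k\tau)\big)$ lies within distance $(r/t)(M+R)<2\tau(M+R)/t$ of $W_{k\tau}/(k\tau)\in C'$, hence inside $C$ as soon as $t>2\tau(M+R)/\varepsilon$. So for such $t$ I bound $\mathds{1}_{\{W_t/t\in C\}}$ below by the product of those two indicators, insert the extra factor $U_{k\tau}$ (a looser bound, exactly as in the preceding lemma), split $e^{H_t}=e^{H_{k\tau}}e^{\Delta_{k\tau}^rH}$, and apply the renewal restart identity~(\ref{start_0}); this produces
\begin{equation*}
\mu_t(C)\ \ge\ \mu_{k\tau}(C')\cdot\mu_r(B_{0,M})\ \ge\ \mu_\tau(C')^{\,k}\cdot c\ >\ e^{\ell\tau k}\,c,
\end{equation*}
where the second inequality iterates super-multiplicativity of $\mu_\cdot(C')$ (legitimate since $C'$ is convex). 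Taking logarithms, dividing by $t=k\tau+r$ with $r<2\tau$, and letting $t\uparrow\infty$ (so $k\uparrow\infty$ while $r$ stays bounded) yields $\liminf_{t\uparrow\infty}(1/t)\ln\mu_t(C)\ge\ell$; letting $\ell\uparrow\mathcal{L}(C)$ then gives the reverse bound, including the case $\mathcal{L}(C)=+\infty$, and the proof is complete.

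The step I expect to be the crux is the one just described: it is the unboundedness of the rewards that forces the double truncation of $C$ (erosion toward an interior point for the margin $\varepsilon$, intersection with a ball for boundedness) together with the confinement of the bridging reward to $B_{0,M}$, and the delicate point is to verify, via continuity of the finite measures $\mu_\tau$ and $\mu_r$, that none of these truncations costs anything on the exponential scale, aperiodicity entering only through the already-established fact that $Z_r^c>0$ for $r>t_c$, which keeps the bridging segment non-degenerate.
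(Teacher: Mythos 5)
Your proof is correct, and it reorganizes the paper's argument in a genuinely different way. The paper proceeds in two steps: it first establishes a dichotomy --- either $\mu_t(C)=0$ for all $t>t_c$, or there is $\tau\ge t_c$ with $\mu_t(C)>0$ for all $t>\tau$ --- and then, given eventual positivity, obtains the limit by the classical Fekete computation on $\ln\mu_t(C)$, taking the bridge segment inside $C$ itself (so the only input is a finite lower bound on $\ln\mu_r(C)$ over a finite window of remainders $r$). The dichotomy is itself proved by a separate concatenation argument: Lindel\"of's lemma (via separability of $\rew$) supplies a point $w_o\in C$ all of whose balls are charged by $\mu_{\tau_o}$, a small ball $B_1$ around $w_o$ carries the main block, a large ball $B_{k_o}$ carries the bridge, and a triangle estimate places the spliced path in $C$. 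You fuse these two steps into one: for each real $\ell<\mathcal{L}(C)$ you erode and bound $C$ into an open convex $C'=C_\lambda\cap B_{0,R}$ that retains mass $\mu_\tau(C')>e^{\ell\tau}$ (justified by continuity from below of the finite measure $\mu_\tau$, with no need for Lindel\"of) and is equipped with a uniform margin $\varepsilon=\lambda\rho$, bridge with the fixed ball $B_{0,M}$, and read off $\mu_t(C)\ge\mu_\tau(C')^k\cdot c>e^{\ell\tau k}c$, which delivers eventual positivity and the $\liminf$ bound simultaneously. Both arguments rest on the same pillars --- the restart identity~(\ref{start_0}), super-multiplicativity over convex sets, and truncation to control unbounded rewards --- but the paper's layout isolates the positivity question before a short Fekete step, whereas yours is a single self-contained pass yielding the quantitative estimate directly.
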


\begin{proof}
We shall show in a moment that the hypothesis that $C$ is open entails
that either $\mu_t(C)=0$ for all $t>t_c$ or there exists $\tau\ge t_c$
such that $\mu_t(C)>0$ for all $t>\tau$. Lemma \ref{start} is obvious
in the first case. The second case is solved as follows. Pick an
integer $s>t_c$. Then, fix an integer $\gamma\ge 1$ such that $\gamma
s>\tau$ and a constant $M>-\infty$ such that $\ln\mu_r(C)\ge M$ when
$\gamma s\le r<2\gamma s$, which exists because $\gamma
s>\tau$. Expressing any $t\ge 2\gamma s$ as $t=q\gamma s+r$ with $q\ge
1$ and $\gamma s\le r<2\gamma s$, super-additivity gives
$\ln\mu_t(C)\ge q\gamma\ln\mu_s(C)+\ln\mu_r(C)\ge
q\gamma\ln\mu_s(C)+M$, thus showing that
$\liminf_{t\uparrow\infty}(1/t)\ln\mu_t(C)\ge(1/s)\ln\mu_s(C)$.  The
arbitrariness of $s$ yields
$\liminf_{t\uparrow\infty}(1/t)\ln\mu_t(C)\ge\sup_{s>t_c}\{(1/s)\ln\mu_s(C)\}=:\mathcal{L}(C)$.

We now prove that either $\mu_t(C)=0$ for all $t>t_c$ or there exists
$\tau\ge t_c$ with the property that $\mu_t(C)>0$ for all
$t>\tau$. Assume that $\mu_{\tau_o}(C)>0$ for some $\tau_o>t_c$.  To
begin with, we notice that if for every $w\in C$ it were possible to
find a number $\delta_w>0$ such that $\mu_{\tau_o}(B_{w,\delta_w})=0$,
then the open covering $\{B_{w,\delta_w}\}_{w\in C}$ of $C$ would
contain a countable subcollection covering $C$ by separability of
$\rew$ and Lindel\"of's lemma with the consequence that
$\mu_{\tau_o}(C)=0$.  This argument shows that there exists at least
one point $w_o\in C$ such that $\mu_{\tau_o}(B_{w_o,\delta})>0$ for
all $\delta>0$. Since $C$ is open, there is $\delta_o>0$ such that
$B_{w_o,2\delta_o}\subseteq C$.  This way, we have constructed open
balls $B_k:=B_{w_o,k\delta_o}$ so that $\mu_{\tau_o}(B_1)>0$ and
$B_2\subseteq C$. Furthermore, since
$\lim_{k\uparrow\infty}\mu_r(B_k)=\mu_r(\rew)=Z_r^c>0$ for all
$r>t_c$, there exists an integer $k_o\ge 1$ such that
$\mu_r(B_{k_o})>0$ if $r$ satisfies $\tau_o\le r<2\tau_o$. Set
$\tau:=2k_o\tau_o$.

Let us pick an arbitrary $t>\tau$ and let us show that $\mu_t(C)>0$.
The fact that $t>\tau\ge 2\tau_o$ makes it possible to express $t$ as
$t=q\tau_o+r$ with integers $q$ and $r$ such that $q\ge 1$ and
$\tau_o\le r<2\tau_o$. We notice that $W_t/t\in B_2$ whenever
$W_{q\tau_o}/q\tau_o\in B_1$ and $\Delta_{q\tau_o}^rW/r\in B_{k_o}$,
as the following bounds demonstrate:
\begin{eqnarray}
\nonumber
\big\|W_t-tw_o\big\|&\le&\big\|W_{q\tau_o}-q\tau_ow_o\big\|+\big\|\Delta_{q\tau_o}^rW-rw_o\big\|\\
\nonumber
&<&\delta_o(q\tau_o+k_or)<\delta_o(t+2k_o\tau_o)=\delta_o(t+\tau)<2\delta_ot.
\end{eqnarray}
Then, recalling that $B_2\subseteq C$ we get
\begin{eqnarray}
  \nonumber
  \mu_t(C)&\ge&\Ex\bigg[\mathds{1}_{\big\{\frac{W_t}{t}\in B_2 \big\}}U_t e^{H_t}\bigg]\\
\nonumber
  &\ge&\Ex\bigg[\mathds{1}_{\big\{\frac{W_{q\tau_o}}{q\tau_o}\in
    B_1 \big\}}\mathds{1}_{\big\{\frac{\Delta_{q\tau_o}^rW}{r}\in B_{k_o}\big\}} U_t e^{H_t}\bigg]\\
\nonumber
&=&\Ex\bigg[\mathds{1}_{\big\{\frac{W_{q\tau_o}}{q\tau_o}\in
    B_1 \big\}}e^{H_{q\tau_o}}\mathds{1}_{\big\{\frac{\Delta_{q\tau_o}^rW}{r}\in B_{k_o}\big\}} U_{q\tau_o+r}\,e^{\Delta_{q\tau_o}^rH}\bigg].
\end{eqnarray}
As in the proof of lemma \ref{start}, a convenient looser lower bound
is obtained by introducing $U_{q\tau_o}$. Since
$(U_{q\tau_o+r},\Delta_{q\tau_o}^rH,\Delta_{q\tau_o}^rW)$ is independent of
$(U_{q\tau_o},H_{q\tau_o},W_{q\tau_o})$ and distributed as
$(U_r,H_r,W_r)$ when $q\tau_o$ is a renewal we find
\begin{eqnarray}
  \nonumber
  \mu_t(C)&\ge&\Ex\bigg[\mathds{1}_{\big\{\frac{W_{q\tau_o}}{q\tau_o}\in
    B_1 \big\}}U_{q\tau_o}e^{H_{q\tau_o}}\mathds{1}_{\big\{\frac{\Delta_{q\tau_o}^rW}{r}\in B_{k_o}\big\}} U_{q\tau_o+r}\,e^{\Delta_{q\tau_o}^rH}\bigg]\\
\nonumber
 &=&\Ex\bigg[\mathds{1}_{\big\{\frac{W_{q\tau_o}}{q\tau_o}\in
    B_1 \big\}}U_{q\tau_o}e^{H_{q\tau_o}}\bigg]\cdot\Ex\bigg[\mathds{1}_{\big\{\frac{W_r}{r}\in B_{k_o}\big\}} U_r e^{H_r}\bigg]\\
\nonumber
&=&\mu_{q\tau_o}(B_1)\cdot\mu_r(B_{k_o})\ge\mu^q_{\tau_o}(B_1)\cdot\mu_r(B_{k_o}),
\end{eqnarray}
where the last inequality is due to super-multiplicativity because
$B_1$ is convex. We deduce from here that $\mu_t(C)>0$ as both
$\mu_{\tau_o}(B_1)>0$ and $\mu_r(B_{k_o})>0$ by construction.
\end{proof}

Lemma \ref{start} suggests to consider the putative rate function $J$
that maps any $w\in\rew$ in the extended real number $J(w)$ defined
by
\begin{equation*}
J(w):=-\inf_{\delta>0}\big\{\mathcal{L}(B_{w,\delta})\big\}.
\end{equation*}
In fact, the function $J$ controls the measure decay of open and
compact sets as follows.
\begin{proposition}
\label{WLDP}
The following conclusions hold:
\begin{enumerate}[{\upshape(i)}]
\item $\displaystyle{\liminf_{t\uparrow\infty}\frac{1}{t}\ln\mu_t(G)\ge -\inf_{w\in G}\{J(w)\}}$ for each $G\subseteq\rew$ open;
\item $\displaystyle{\limsup_{t\uparrow\infty}\frac{1}{t}\ln\mu_t(K)\le -\inf_{w\in K}\{J(w)\}}$ for each $K\subseteq\rew$ compact.
\end{enumerate} 
\end{proposition}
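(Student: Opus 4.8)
The plan is to read part (i) off Lemma \ref{start} essentially for free, and to prove part (ii) by the standard finite-subcover argument; the only structural input needed is that an open ball $B_{w,\delta}$ is both open and convex, so Lemma \ref{start} applies to it and, in particular, $\lim_{t\uparrow\infty}(1/t)\ln\mu_t(B_{w,\delta})=\mathcal{L}(B_{w,\delta})$. For the lower bound, I would fix $w\in G$ and use openness of $G$ to choose $\delta>0$ with $B_{w,\delta}\subseteq G$; monotonicity of $\mu_t$ then gives $\mu_t(G)\ge\mu_t(B_{w,\delta})$, so by Lemma \ref{start} we get $\liminf_{t\uparrow\infty}(1/t)\ln\mu_t(G)\ge\mathcal{L}(B_{w,\delta})$. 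Since $\delta\mapsto\mathcal{L}(B_{w,\delta})$ is nondecreasing (a smaller ball carries less $\mu_t$-mass), the right-hand side is at least $\inf_{\delta'>0}\{\mathcal{L}(B_{w,\delta'})\}=-J(w)$, hence $\liminf_{t\uparrow\infty}(1/t)\ln\mu_t(G)\ge -J(w)$; taking the supremum over $w\in G$ yields (i). The case $G=\emptyset$ is trivial, and if $J(w)=\infty$ at some $w$ the bound at that point is vacuous.

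For the upper bound, let $K\subseteq\rew$ be compact; I may assume $\inf_{w\in K}\{J(w)\}>-\infty$, as otherwise there is nothing to prove. Fix any real $\alpha<\inf_{w\in K}\{J(w)\}$. For each $w\in K$ we then have $\inf_{\delta>0}\{\mathcal{L}(B_{w,\delta})\}=-J(w)<-\alpha$, so there is $\delta_w>0$ with $\mathcal{L}(B_{w,\delta_w})<-\alpha$. The family $\{B_{w,\delta_w}\}_{w\in K}$ is an open cover of $K$, so compactness gives a finite subcover $B_{w_1,\delta_{w_1}},\dots,B_{w_N,\delta_{w_N}}$, whence $\mu_t(K)\le\sum_{j=1}^N\mu_t(B_{w_j,\delta_{w_j}})\le N\max_{1\le j\le N}\mu_t(B_{w_j,\delta_{w_j}})$. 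Applying $(1/t)\ln(\cdot)$ and then $\limsup_{t\uparrow\infty}$, the factor $N$ contributes $(1/t)\ln N\to 0$ and drops out, and using $\lim_{t\uparrow\infty}(1/t)\ln\mu_t(B_{w_j,\delta_{w_j}})=\mathcal{L}(B_{w_j,\delta_{w_j}})$ from Lemma \ref{start} we obtain $\limsup_{t\uparrow\infty}(1/t)\ln\mu_t(K)\le\max_{1\le j\le N}\mathcal{L}(B_{w_j,\delta_{w_j}})<-\alpha$. Letting $\alpha\uparrow\inf_{w\in K}\{J(w)\}$ gives (ii).

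I do not expect a real obstacle here: the hard analytic work was already done in Lemma \ref{start}, and what remains is the routine weak-large-deviation packaging. The points that warrant a line of care are: the monotonicity of $\delta\mapsto\mathcal{L}(B_{w,\delta})$, which makes the infimum defining $J$ a genuine limit and is what lets the ball fitted inside $G$ be taken arbitrarily small in (i); the fact that a finite subcover costs nothing on the exponential scale in (ii); and the disposal of the degenerate cases ($K$ or $G$ empty, $\mu_t(B_{w,\delta})\equiv 0$ so $\mathcal{L}(B_{w,\delta})=-\infty$ and $J(w)=+\infty$, or $\inf_{w\in K}\{J(w)\}=-\infty$), in all of which the asserted bounds hold trivially.
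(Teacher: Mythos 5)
Your proof is correct and follows essentially the same route as the paper: both parts rest on Lemma \ref{start} applied to open balls, with the lower bound obtained by fitting a ball inside $G$ and the upper bound by compactness, a finite subcover, and the observation that finite sums do not change exponential rates. One small remark: in part (i), the inequality $\mathcal{L}(B_{w,\delta})\ge\inf_{\delta'>0}\{\mathcal{L}(B_{w,\delta'})\}=-J(w)$ is immediate from the definition of the infimum and does not require the (true but superfluous) monotonicity of $\delta\mapsto\mathcal{L}(B_{w,\delta})$ that you invoke.
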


\begin{proof}
Part (i) is immediate.  Let $G\subseteq\rew$ be open, let $w\in G$ be
an arbitrary point, and let $\delta>0$ be such that
$B_{w,\delta}\subseteq G$.  Since $\mu_t(G)\ge\mu_t(B_{w,\delta})$ and
since $B_{w,\delta}$ is open and convex, lemma \ref{start} gives
$\liminf_{t\uparrow\infty}(1/t)\ln
\mu_t(G)\ge\lim_{t\uparrow\infty}(1/t)\ln
\mu_t(B_{w,\delta})=\mathcal{L}(B_{w,\delta})\ge-J(w)$. The conclusion
follows from the arbitrariness of $w$.

Moving to part (ii), pick a compact set $K$ in $\rew$ and assume
$\inf_{w\in K}\{J(w)\}>-\infty$, otherwise there is nothing to
prove. Let $\lambda<\inf_{w\in K}\{J(w)\}$ be a real number.  Since
there exists $\epsilon>0$ such that $\lambda+\epsilon\le
J(w)=-\inf_{\delta>0}\{\mathcal{L}(B_{w,\delta})\}$ for every $w\in
K$, a number $\delta_w>0$ can be found for each $w\in K$ in such a way
that $\mathcal{L}(B_{w,\delta_w})\le -\lambda$. Then, lemma
\ref{start} yields
$\lim_{t\uparrow\infty}(1/t)\ln\mu_t(B_{w,\delta_w})\le-\lambda$ for
such $\delta_w$. Due to the compactness of $K$, there exist finitely
many points $w_1,\ldots,w_n$ in $K$ such that $K\subseteq\cup_{i=1}^n
B_{w_i,\delta_{w_i}}$. It follows that
$\mu_t(K)\le\sum_{i=1}^n\mu_t(B_{w_i,\delta_{w_i}})$, which in turn
gives $\limsup_{t\uparrow\infty}(1/t)\ln\mu_t(K)\le-\lambda$.  This
way, we get the desired upper bound by sending $\lambda$ to
$\inf_{w\in K}\{J(w)\}$.
\end{proof}

The first important properties of $J$ are presented in the following
lemma.
\begin{lemma}
\label{Jprop}
The function $J$ is lower semicontinuous and convex.
\end{lemma}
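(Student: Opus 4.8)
The strategy is to extract both properties of $J$ from three ingredients already at hand: the super-multiplicativity of the unnormalised measures $\mu_t$ on convex sets, Lemma \ref{start}, and a crude a priori exponential bound on $Z_t^c$. Let me record the latter first. A count of the compositions of $t$ into $n\ge 1$ positive parts, combined with assumption \ref{ass2}, gives $Z_t^c=\sum_{n\ge 1}\Ex[\mathds{1}_{\{T_n=t\}}e^{H_t}]\le\sum_{n=1}^t\binom{t-1}{n-1}e^{z_o t}=2^{t-1}e^{z_o t}$, so that, $\mathcal{L}$ being monotone, $\mathcal{L}(C)\le\mathcal{L}(\rew)=\sup_{t>t_c}\{(1/t)\ln Z_t^c\}\le z_o+\ln 2<\infty$ for every $C\in\Brew$. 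In particular $J(w)=-\inf_{\delta>0}\{\mathcal{L}(B_{w,\delta})\}\ge -(z_o+\ln 2)$ for all $w\in\rew$, so $J$ never takes the value $-\infty$; this is what will keep the convexity inequality free of indeterminate forms.

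For lower semicontinuity I would argue directly, $\rew$ being metrizable. Let $w\in\rew$ and let $w_n\to w$, and fix $\delta>0$. For every $n$ with $\|w_n-w\|<\delta/2$ one has $B_{w_n,\delta/2}\subseteq B_{w,\delta}$, whence $-J(w_n)=\inf_{\eta>0}\{\mathcal{L}(B_{w_n,\eta})\}\le\mathcal{L}(B_{w_n,\delta/2})\le\mathcal{L}(B_{w,\delta})$ by monotonicity of $\mathcal{L}$. Thus $J(w_n)\ge -\mathcal{L}(B_{w,\delta})$ for all large $n$, so $\liminf_{n\uparrow\infty}J(w_n)\ge -\mathcal{L}(B_{w,\delta})$, and letting $\delta$ run over $(0,\infty)$ yields $\liminf_{n\uparrow\infty}J(w_n)\ge\sup_{\delta>0}\{-\mathcal{L}(B_{w,\delta})\}=-\inf_{\delta>0}\{\mathcal{L}(B_{w,\delta})\}=J(w)$.

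Convexity is where super-multiplicativity comes in. The proof of the super-multiplicativity lemma goes through verbatim for two convex sets at once, giving $\mu_{\tau+t}(\lambda C_1+(1-\lambda)C_2)\ge\mu_\tau(C_1)\cdot\mu_t(C_2)$ for all convex $C_1,C_2\in\Brew$ and integers $\tau,t\ge 1$ with $\lambda:=\tau/(\tau+t)$, the only change being the observation that $W_{\tau+t}/(\tau+t)=\lambda\,W_\tau/\tau+(1-\lambda)\,\Delta_\tau^tW/t$ lies in $\lambda C_1+(1-\lambda)C_2$ as soon as $W_\tau/\tau\in C_1$ and $\Delta_\tau^tW/t\in C_2$. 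Fix now $w_1,w_2\in\rew$, a rational $\lambda=a/(a+b)$ with $a,b\ge 1$ integers, and $\delta>0$, and set $w_\lambda:=\lambda w_1+(1-\lambda)w_2$. Since $\lambda B_{w_1,\delta}+(1-\lambda)B_{w_2,\delta}\subseteq B_{w_\lambda,\delta}$, taking $\tau=na$, $t=nb$, $C_1=B_{w_1,\delta}$, $C_2=B_{w_2,\delta}$ and using monotonicity of $\mu_{n(a+b)}$ gives $\mu_{n(a+b)}(B_{w_\lambda,\delta})\ge\mu_{na}(B_{w_1,\delta})\cdot\mu_{nb}(B_{w_2,\delta})$ for every $n\ge 1$. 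I would then take logarithms, divide by $n(a+b)$, let $n\uparrow\infty$, and invoke Lemma \ref{start} on the three open convex balls — the a priori bound above ruling out the form $\infty-\infty$ on the right — to obtain $\mathcal{L}(B_{w_\lambda,\delta})\ge\lambda\,\mathcal{L}(B_{w_1,\delta})+(1-\lambda)\,\mathcal{L}(B_{w_2,\delta})$; since $\mathcal{L}(B_{w_i,\delta})\ge\inf_{\eta>0}\{\mathcal{L}(B_{w_i,\eta})\}=-J(w_i)$, this reads $\mathcal{L}(B_{w_\lambda,\delta})\ge -\lambda J(w_1)-(1-\lambda)J(w_2)$. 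As $\delta>0$ is arbitrary, taking the infimum over $\delta$ gives $J(w_\lambda)\le\lambda J(w_1)+(1-\lambda)J(w_2)$ for every rational ratio $\lambda$, and the remaining $\lambda\in(0,1)$ follow by approximating $\lambda$ with rationals and using the lower semicontinuity just established together with $J>-\infty$.

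I expect the only genuinely delicate point to be this last limit passage: the convex weight must be matched to the time decomposition — which is why the choice $\tau:t=a:b$, hence rational $\lambda$, comes first — and one must check that Lemma \ref{start} applies along the subsequences $n\mapsto na$, $n\mapsto nb$, $n\mapsto n(a+b)$ and that no indeterminate form appears on the right, whence the bookkeeping with the exponential bound on $Z_t^c$. Recognising that the earlier super-multiplicativity argument extends unchanged to two centres is routine, and everything else is elementary manipulation of suprema and infima.
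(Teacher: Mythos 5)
Correct; your proof follows essentially the same route as the paper's, combining monotonicity of $\mathcal{L}$ for lower semicontinuity with regeneration-driven super-multiplicativity of the $\mu_t$ on balls to obtain rational convexity and then full convexity via density and the lower semicontinuity already established. The only differences are cosmetic or incremental: the paper restricts to the midpoint case (taking $\tau=t$, hence $\lambda=1/2$) and invokes dyadic rationals afterward, while you split $\tau:t=a:b$ to hit all rational $\lambda$ directly; and your a priori bound $Z_t^c\le 2^{t-1}e^{z_o t}$, giving $\mathcal{L}\le z_o+\ln 2<\infty$ from Assumption \ref{ass2}, is a genuinely useful addition, as it explicitly rules out the value $+\infty$ for $\mathcal{L}$ (hence any $\infty-\infty$ on the right of the super-additivity inequality) at a point in the argument where the identity $\mathcal{L}(\rew)=z(0)$ from Lemma \ref{zprop} is not yet available.
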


\begin{proof}
Pick $w\in\rew$ and let $\{w_i\}_{i\ge 0}$ be a sequence of points
converging to $w$. We show that
$\liminf_{i\uparrow\infty}J(w_i)\ge-\mathcal{L}(B_{w,\delta})$ for all
numbers $\delta>0$, which results in
$\liminf_{i\uparrow\infty}J(w_i)\ge J(w)$ and proves the lower
semicontinuity of $J$. Given $\delta>0$ there exists $i_o\ge 0$ such
that $\|w_i-w\|\le\delta/2$ if $i\ge i_o$. Then, monotonicity of
$\mathcal{L}$ inherited from the measures $\mu_t$ entails that
$-J(w_i)\le\mathcal{L}(B_{w_i,\delta/2})\le\mathcal{L}(B_{w,\delta})$
for each $i\ge i_o$ since $B_{w_i,\delta/2}\subseteq B_{w,\delta}$.
The bound
$\liminf_{i\uparrow\infty}J(w_i)\ge-\mathcal{L}(B_{w,\delta})$ follows
from here.

As far as the proof of the convexity of $J$ is concerned, lower
semicontinuity combined with the fact that dyadic rationals in $[0,1]$
are dense in $[0,1]$ makes it sufficient to verify that for each $u$
and $w$ in $\rew$
\begin{equation}
  J\bigg(\frac{u+w}{2}\bigg)\le \frac{J(u)+J(w)}{2}.
\label{Jprop_1}
\end{equation}
To this aim, we notice that for each number $\delta>0$ and integer
$t\ge 1$ the conditions $W_t/t\in B_{u,\delta}$ and $\Delta_t^tW/t\in
B_{w,\delta}$ imply $W_{2t}/(2t)\in B_{(u+w)/2,\delta}$, as one can
easily verify.  It follows that
\begin{eqnarray}
\nonumber
\mu_{2t}\Big(B_{\frac{u+w}{2},\delta}\Big)&=&\Ex\bigg[\mathds{1}_{\big\{\frac{W_{2t}}{2t}\in
    B_{\frac{u+w}{2},\delta}\big\}}U_{2t} e^{H_{2t}}\bigg]\\
\nonumber
&\ge&\Ex\bigg[\mathds{1}_{\big\{\frac{W_t}{t}\in B_{u,\delta}\big\}}\mathds{1}_{\big\{\frac{\Delta_t^tW}{t}\in B_{w,\delta}\big\}}U_{2t} e^{H_{2t}}\bigg]\\
\nonumber
&=&\Ex\bigg[\mathds{1}_{\big\{\frac{W_t}{t}\in B_{u,\delta}\big\}} e^{H_t}\mathds{1}_{\big\{\frac{\Delta_t^tW}{t}\in B_{w,\delta}\big\}}U_{2t} e^{\Delta_t^tH}\bigg].
\end{eqnarray}
Inserting $U_t$ and exploiting the fact that
$(U_{2t},\Delta_t^tH,\Delta_t^tW)$ is independent of $(U_t,H_t,W_t)$
and distributed as $(U_t,H_t,W_t)$ when $t$ is a renewal we get
\begin{eqnarray}
  \nonumber
  \mu_{2t}\Big(B_{\frac{u+w}{2},\delta}\Big)&\ge&\Ex\bigg[\mathds{1}_{\big\{\frac{W_t}{t}\in B_{u,\delta}\big\}}U_t e^{H_t}
    \mathds{1}_{\big\{\frac{\Delta_t^tW}{t}\in B_{w,\delta}\big\}}U_{2t} e^{\Delta_t^tH}\bigg]\\
  \nonumber
  &=&\Ex\bigg[\mathds{1}_{\big\{\frac{W_t}{t}\in B_{u,\delta}\big\}}U_t e^{H_t}\bigg]\cdot
  \Ex\bigg[\mathds{1}_{\big\{\frac{W_t}{t}\in B_{w,\delta}\big\}}U_t e^{H_t}\bigg]\\
  \nonumber
  &=&\mu_t(B_{u,\delta})\cdot\mu_t(B_{w,\delta}).
\end{eqnarray}
This way, taking logarithms, dividing by $2t$, and sending $t$ to
infinity, we find
$\mathcal{L}(B_{(u+w)/2,\delta})\ge(1/2)\mathcal{L}(B_{u,\delta})+(1/2)\mathcal{L}(B_{w,\delta})\ge
-(1/2)J(u)-(1/2)J(w)$ thanks to lemma \ref{start} because open balls
are open convex sets. Inequality (\ref{Jprop_1}) follows from here by
the arbitrariness of $\delta$.
\end{proof}

We conclude the section strengthening proposition \ref{WLDP} for
convex sets. We know that
$\limsup_{t\uparrow\infty}(1/t)\ln\mu_t(C)=\mathcal{L}(C)$ for every
$C\in\Brew$ convex thanks to super-additivity. The following lemma
draws a link between $\mathcal{L}(C)$ and $\inf_{w\in C}\{J(w)\}$.
\begin{lemma}
\label{lemma_conv}
Let $C\subseteq\rew$ be open convex, closed convex, or any convex set
in $\Brew$ when $\rew$ is finite-dimensional.  Then,
$\mathcal{L}(C)\le-\inf_{w\in C}\{J(w)\}$.
\end{lemma}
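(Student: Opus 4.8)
We must show $\mathcal{L}(C)\le-\inf_{w\in C}\{J(w)\}$. If $\mathcal{L}(C)=-\infty$ there is nothing to prove, so assume $\mathcal{L}(C)>-\infty$; extensivity gives $Z_t^c=\mu_t(\rew)\le e^{(z_o+\ln 2)t}$ for every $t$, hence also $\mathcal{L}(C)\le\mathcal{L}(\rew)<\infty$. Two elementary reductions come first. Since $J$ is convex and lower semicontinuous (Lemma \ref{Jprop}), convexity of $J$ along segments yields $\inf_{w\in\cl C}\{J(w)\}=\inf_{w\in C}\{J(w)\}$ for convex $C$: join a point of $\cl C$ where $J$ is finite to one of $C$ where $J$ is finite — such a point of $C$ exists unless $J\equiv\infty$ on $C$, in which case $\mu_t(C)=0$ for all large $t$ and $\mathcal{L}(C)=-\infty$. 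Combined with monotonicity of $\mathcal{L}$ this shows it is enough to prove the bound for $C$ closed and convex. Finally, since $C$ is convex, super-additivity of $t\mapsto\ln\mu_t(C)$ lets us replace the defining supremum by $\mathcal{L}(C)=\limsup_{t\uparrow\infty}(1/t)\ln\mu_t(C)$, and likewise for the convex sets appearing below.

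When $\rew$ is finite-dimensional a closed convex $C$ is $\sigma$-compact: write $C=\bigcup_n K_n$ with $K_n:=C\cap\{\|\cdot\|\le n\}$, each $K_n$ compact \emph{and convex}. Convexity of $K_n$ makes $\mathcal{L}(K_n)=\limsup_t(1/t)\ln\mu_t(K_n)$, so Proposition \ref{WLDP}(ii) applies and gives $\mathcal{L}(K_n)\le-\inf_{K_n}\{J\}$; and since $\mu_t(C)=\sup_n\mu_t(K_n)$, the definition of $\mathcal{L}$ yields $\mathcal{L}(C)=\sup_n\mathcal{L}(K_n)\le\sup_n\big(-\inf_{K_n}\{J\}\big)=-\inf_{w\in C}\{J(w)\}$. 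This settles the finite-dimensional case — hence also the third class of sets in the statement, and, through the reduction above, open convex sets in finite dimension.

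For infinite-dimensional $\rew$ and $C$ closed convex I would argue by Hahn--Banach. For each $\varphi\in\rew^\star$ bounded below on $C$ put $c_\varphi:=\inf_{w\in C}\varphi(w)$, so that $C\subseteq\{w:\varphi(w)\ge c_\varphi\}$. Pushing $\mu_t$ forward by $\varphi$ one recovers the constrained pinning model carrying the real cumulative reward $\varphi(W_t)=\sum_i\varphi(X_i)\mathds{1}_{\{T_i\le t\}}$, so the one-dimensional case already proved (the half-line $[c_\varphi,\infty)$ being a $\sigma$-compact convex subset of $\mathbb{R}$) gives $\mathcal{L}(C)\le\mathcal{L}^\varphi\big([c_\varphi,\infty)\big)\le-\inf_{u\ge c_\varphi}\{J^\varphi(u)\}$, where $J^\varphi$ is the rate function of $\varphi(W_t)/t$ and $\mathcal{L}^\varphi$ the corresponding decay functional on $\mathbb{R}$. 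Because the ball $B_{w,\delta/\|\varphi\|}$ is carried by $\varphi$ into $(\varphi(w)-\delta,\varphi(w)+\delta)$ one has $J^\varphi(\varphi(w))\le J(w)$ for every $w$, so that $\inf_{u\ge c_\varphi}\{J^\varphi(u)\}\le\inf_{w\in C}\{J(w)\}$; a single $\varphi$ thus yields only a bound that is too weak. The proof is then completed by letting $\varphi$ range over all functionals bounded below on $C$ and invoking the minimax identity $\inf_{w\in C}\{J(w)\}=\sup_\varphi\inf_{u\ge c_\varphi}\{J^\varphi(u)\}$, whose nontrivial inequality rests on convexity of $C$ together with convexity and lower semicontinuity of $J$.

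The genuine obstacle is precisely this last assembly. Separating $C$ from a single exterior point always produces a bound whose right-hand side is an infimum over a whole half-space rather than over $C$, so one must exchange $\inf_{w\in C}$ with $\sup_\varphi$, and such a minimax exchange is false without additional structure — here convexity of $C$ and Lemma \ref{Jprop} must play the role of the compactness that is lacking in infinite dimension. Along the way one must verify a constraint qualification, discard functionals unbounded on $C$ (which carry no information), and deal with the degenerate case in which $\inf_C\{J\}$ already equals the global minimum $-\mathcal{L}(\rew)$ of $J$, where the claimed bound collapses to the trivial $\mathcal{L}(C)\le\mathcal{L}(\rew)$; the truly delicate point is the separation of the disjoint closed convex sets $C$ and $\{J\le a\}$ when $\rew$ is infinite-dimensional, which is not automatic and is where convexity is indispensable.
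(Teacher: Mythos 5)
Your finite-dimensional argument is sound and in fact a little cleaner than the paper's for that case: writing a closed convex set as an increasing union of compact convex sets $K_n:=C\cap\{\|\cdot\|\le n\}$, observing that $\sup_n\mathcal{L}(K_n)=\mathcal{L}(C)$, and invoking Proposition~\ref{WLDP}(ii) together with super-additivity on each $K_n$ is exactly the kind of $\sigma$-compactness argument that Heine--Borel makes available in $\mathbb{R}^d$. But precisely because it rests on Heine--Borel, it does not survive the passage to infinite dimension, where closed balls are never compact, and you never actually close the infinite-dimensional case.

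That is the genuine gap. Your proposed Hahn--Banach/minimax route is a sketch, not a proof, and you say so yourself: you write that you ``would argue by Hahn--Banach,'' that a single functional ``yields only a bound that is too weak,'' that the needed minimax exchange ``is false without additional structure,'' and that ``the truly delicate point\ldots is not automatic.'' Those are not loose ends one could tidy up; the entire content of the lemma in infinite dimension lives precisely in that unresolved assembly. Moreover, a subsidiary step in your reduction --- the claim that $J\equiv\infty$ on $C$ forces $\mu_t(C)=0$ for large $t$, hence $\mathcal{L}(C)=-\infty$ --- is asserted without justification; Proposition~\ref{WLDP} only controls compact sets, so for non-compact $C$ this does not follow from the fact that $J$ is everywhere infinite on $C$.

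The missing idea, which the paper uses and which is dimension-free, is \emph{tightness of the single finite measure $\mu_\tau$}, not $\sigma$-compactness of $C$. Since $\rew$ is a complete separable Banach space, every finite Borel measure on it is inner regular with respect to compact sets (Ulam's theorem; the paper cites \cite{Bogachevbook}, Theorem~7.1.7). Having fixed $\epsilon>0$ and a time $\tau$ with $(1/\tau)\ln\mu_\tau(C)$ within $\epsilon$ of $\mathcal{L}(C)$, one extracts a compact $K_o\subseteq C$ carrying all but an exponentially small fraction of $\mu_\tau(C)$. The remaining (and genuinely geometric) step is to sandwich $K_o\subseteq K\subseteq C$ with $K$ \emph{compact and convex}: one takes $K:=\cl\,\mathrm{conv}(K_o)$, which is compact convex in any Banach space (\cite{Rudin}, Theorem~3.20); $K\subseteq C$ is then automatic when $C$ is closed, and requires a separate covering argument when $C$ is merely open. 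With such a $K$ in hand one concludes via Proposition~\ref{WLDP}(ii) exactly as you do for $K_n$. This is the route you should follow; the duality/minimax route as you have left it does not constitute a proof.
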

\begin{proof}
The lemma is trivial if $\mathcal{L}(C)=-\infty$. Assume
$\mathcal{L}(C)>-\infty$ and pick $\epsilon>0$. Since
$\limsup_{t\uparrow\infty}(1/t)\ln\mu_t(C)=\mathcal{L}(C)$ there
exists an integer $\tau\ge 1$ such that
$\mathcal{L}(C)\le(1/\tau)\ln\mu_\tau(C)+\epsilon$.  Completeness and
separability of $\rew$ entail that $\mu_\tau$ is tight as it is
bounded from above by $Z_\tau^c<+\infty$ (see \cite{Bogachevbook},
theorem 7.1.7).  Consequently, a compact set $K_o\subseteq C$ can be
found so that $\mu_\tau(C)\le
\mu_\tau(K_o)+[1-\exp(-\epsilon\tau)]\mu_\tau(C)$.  Thus,
$\mu_\tau(C)\le\exp(\epsilon\tau)\mu_\tau(K_o)$ and
$\mathcal{L}(C)\le(1/\tau)\ln\mu_\tau(K_o)+2\epsilon$ follows.  We
shall show in a moment that there exists a compact convex set $K$ with
the property that $K_o\subseteq K\subseteq C$.  Then, using the fact
that $K_o\subseteq K$ we reach the further bound
$\mathcal{L}(C)\le(1/\tau)\ln\mu_\tau(K)+2\epsilon\le\mathcal{L}(K)+2\epsilon$.
At this point, we notice that on the one hand
$\mathcal{L}(K)=\limsup_{t\uparrow\infty}(1/t)\ln\mu_t(K)$ by
super-additivity as $K$ is convex, and on the other hand
$\limsup_{t\uparrow\infty}(1/t)\ln\mu_t(K)\le-\inf_{w\in K}\{J(w)\}$
by part (ii) of proposition \ref{WLDP} as $K$ is compact. Thus,
$\mathcal{L}(C)\le-\inf_{w\in K}\{J(w)\}+2\epsilon\le-\inf_{w\in
  C}\{J(w)\}+2\epsilon$ because $K\subseteq C$ and the lemma follows
from the arbitrariness of $\epsilon$.

Let us prove now that there exists a compact convex set $K$ with the
property that $K_o\subseteq K\subseteq C$.  The hypothesis that the
convex set $C$ is either open or closed when $\rew$ if
infinite-dimensional comes into play here.  Let $C_o$ be the convex
hull of $K_o$ and let $K:=\cl C_o$, $\cl A$ denoting the closure of a
set $A$. Clearly, $K_o\subseteq C_o\subseteq C$ and $C_o\subseteq
K$. Since $K_o$ is compact, $C_o$ is convex and compact whenever
$\rew$ is finite-dimensional, whereas $K$ is convex and compact in any
circumstance (see \cite{Rudin}, theorem 3.20). We want to demonstrate
that $K\subseteq C$.  If $\rew$ is finite-dimensional, then $K=C_o$
and we get the desired result from $C_o\subseteq C$. If $\rew$ is
infinite-dimensional and $C$ is closed, then $K\subseteq C$ follows
from $C_o\subseteq C$ by taking closures. The only nontrivial case is
when $\rew$ is infinite-dimensional and $C$ is open. Assume that $C$
is open from now on and for each $w\in C$ let $\delta_w>0$ be such
that $B_{w,2\delta_w}\subseteq C$. As $K_o$ is compact, there exist
finitely many points $w_1,\ldots,w_n$ in $K_o$ so that
$K_o\subseteq\cup_{i=1}^nB_{w_i,\delta_{w_i}}$. Let $K'$ be the convex
hull of $\cup_{i=1}^n(\cl B_{w_i,\delta_{w_i}}\cap K)$.  We have
$K'\subseteq C$ because $\cup_{i=1}^n(\cl B_{w_i,\delta_{w_i}}\cap
K)\subseteq\cup_{i=1}^n\cl B_{w_i,\delta_{w_i}}\subseteq C$ thanks to
the fact that $B_{w_i,2\delta_{w_i}}\subseteq C$ for every $i$ and
because $C$ is convex. This way, $K\subseteq C$ is verified if we show
that $K=K'$. The inclusion $K'\subseteq K$ is immediate since
$\cup_{i=1}^n(\cl B_{w_i,\delta_{w_i}}\cap K)\subseteq K$ and $K$ is
convex. In order to show the opposite inclusion $K\subseteq K'$ we
observe that the set $K'$ is convex and compact since it is the convex
hull of the union of the compact convex sets $\cl
B_{w_1,\delta_{w_1}}\cap K,\ldots,\cl B_{w_n,\delta_{w_n}}\cap K$ (see
\cite{Rudin}, theorem 3.20).  Then, we observe that $K_o\subseteq K'$
as $\cup_{i=1}^n(\cl B_{w_i,\delta_{w_i}}\cap K)=(\cup_{i=1}^n\cl
B_{w_i,\delta_{w_i}})\cap K$ and both $\cup_{i=1}^n\cl
B_{w_i,\delta_{w_i}}$ and $K$ contain $K_o$.  This way, we first
realize that $C_o\subseteq K'$ since $C_o$ is the smallest convex set
that contains $K_o$, and by taking closures we later deduce that
$K\subseteq K'$ as $K'$ is closed.
\end{proof}

\subsection{Expectations and generalized renewal equation}
\label{par:renseq}

Let $(S_1,V_1),(S_2,V_2),\ldots$ be a sequence of independent and
identically distributed random vectors on
$(\Omega,\mathcal{F},\prob)$, the $V_i$'s taking values in
$[0,+\infty)$, and for each time $t\ge 1$ denote by $\Psi_t$ the
  expected value
\begin{equation}
\Psi_t:=\Ex\bigg[U_t\prod_{i\ge 1}\Big(\mathds{1}_{\{T_i>t\}}+V_i\mathds{1}_{\{T_i\le t\}}\Big)\bigg].
\label{def:Zphi}
\end{equation}
Here we determine the asymptotic exponential rate of growth of
$\Psi_t$ with respect to $t$. The solution to this problem is a needed
preliminary step to relate the rate function $J$ to the function $z$
defined by (\ref{defzetafun}).  The computation of $\Psi_t$ takes
advantage of the {\it generalized renewal equation}
\begin{equation}
\Psi_t=\sum_{s=1}^t a_s\Psi_{t-s}
\label{gen_renewaleq}
\end{equation}
satisfied for each $t\ge 1$ with the initial condition $\Psi_0:=1$,
where $a_s:=\Ex[V_1\mathds{1}_{\{S_1=s\}}]$ is a non-negative extended real
number. This equation is deduced conditioning on $S_1$ and then using
the fact that the renewal process starts over at the renewal time
$T_1$.  We are only interested in the case where $a_{\sigma_l}>0$ for
each $l$, $\sigma_1,\ldots,\sigma_m$ being the $m$ coprime integers
introduced in Section \ref{sec:basics} to make effective aperiodicity
of the waiting time distribution.

The expected value $A(\zeta):=\Ex[V_1e^{-\zeta
    S_1}\mathds{1}_{\{S_1<\infty\}}]=\sum_{s\ge 1}a_s e^{-\zeta s}$ exists
as an extended real number and defines a lower semicontinuous function
$A$ that maps $\zeta\in\Rl$ in $A(\zeta)$. The
number $\psi$ given by
\begin{equation}
\psi:=\inf\Big\{\zeta\in\Rl\,:\,A(\zeta)\le 1\Big\},
\label{zeta_o}
\end{equation}
where the infimum over the empty set is customarily interpreted as
$+\infty$, exactly is the exponential rate of growth we are looking for
as stated by the next proposition.  The level set
$\{\zeta\in\Rl:A(\zeta)\le 1\}$ is bounded from below since
$A(\zeta)\ge\sum_{l=1}^ma_{\sigma_l} e^{-\zeta \sigma_l}>1$ for all
$\zeta$ sufficiently negative and closed due to lower
semicontinuity. Consequently, $\psi>-\infty$ and $A(\psi)\le 1$ if
$\psi<+\infty$. It follows that $\Psi_t\le e^{\psi t}$ for all $t\ge
1$, which is trivial if $\psi=+\infty$ and is easily verified by
induction starting from (\ref{gen_renewaleq}) when $\psi<+\infty$.
\begin{proposition}
  \label{Zlim}
$\lim_{t\uparrow\infty}(1/t)\ln \Psi_t$ exists as an extended real
  number and is equal to $\psi>-\infty$. Moreover, the bound $\Psi_t\le
  e^{\psi t}$ holds for all $t\ge 1$.
\end{proposition}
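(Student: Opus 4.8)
The plan is to establish the two inequalities $\liminf_{t\uparrow\infty}(1/t)\ln\Psi_t\ge\psi$ and $\limsup_{t\uparrow\infty}(1/t)\ln\Psi_t\le\psi$ separately, the upper bound being essentially free. For the upper bound: the paragraph preceding the proposition already observes, via an easy induction on the generalized renewal equation \eqref{gen_renewaleq} and the fact that $A(\psi)\le 1$ when $\psi<\infty$ (with the bound trivial when $\psi=\infty$), that $\Psi_t\le e^{\psi t}$ for all $t\ge 1$. Taking logarithms, dividing by $t$, and letting $t\uparrow\infty$ immediately gives $\limsup_{t\uparrow\infty}(1/t)\ln\Psi_t\le\psi$. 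So the whole content of the proposition is the matching lower bound, and the last sentence of the statement is a restatement of what is already known.

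For the lower bound I would first dispose of degenerate cases: if $\Psi_t=\infty$ for some $t$ (possible since the $a_s$ are allowed to be $+\infty$), then super-multiplicativity-type reasoning or direct monotonicity in the renewal equation propagates this and makes $(1/t)\ln\Psi_t\to\infty\ge\psi$ trivially; and if $\psi=-\infty$ there is nothing to prove. So assume all $\Psi_t<\infty$ and $\psi>-\infty$. The natural device is a \emph{tilting}: for $\zeta\in\Rl$ set $\widetilde a_s:=a_s e^{-\zeta s}$ and $\widetilde\Psi_t:=\Psi_t e^{-\zeta t}$, which again satisfies the renewal equation $\widetilde\Psi_t=\sum_{s=1}^t\widetilde a_s\widetilde\Psi_{t-s}$ with $\widetilde\Psi_0=1$ and with $\sum_{s\ge1}\widetilde a_s=A(\zeta)$. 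Choosing $\zeta<\psi$ forces $A(\zeta)>1$ by the definition \eqref{zeta_o} of $\psi$ as an infimum (and the level set being closed from below). For such a $\zeta$ one shows $\widetilde\Psi_t$ does not decay exponentially, which translates back to $\liminf_{t\uparrow\infty}(1/t)\ln\Psi_t\ge\zeta$; letting $\zeta\uparrow\psi$ finishes the proof.

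The key step — and the main obstacle — is showing that when $\sum_{s\ge1}\widetilde a_s>1$ the renewal-type sequence $\widetilde\Psi_t$ stays bounded away from $0$ on a set of times of positive density, in fact along the semigroup generated by $\sigma_1,\dots,\sigma_m$. Here I would use the hypothesis $a_{\sigma_l}>0$ for each $l$ together with coprimality: iterating the renewal equation along multiples and conical combinations of the $\sigma_l$, and using that for every $t$ beyond the Frobenius number $t$ is such a combination, one gets a lower bound $\widetilde\Psi_t\ge\prod_l\widetilde a_{\sigma_l}^{\,n_l}$ for suitable exponents, which is not yet enough on its own. The cleaner route is to pick $K$ large enough that $\sum_{s=1}^K\widetilde a_s>1$ (possible since the infinite sum exceeds $1$), truncate the renewal equation to $\widetilde\Psi_t\ge\sum_{s=1}^{K\wedge t}\widetilde a_s\widetilde\Psi_{t-s}$, and then invoke a standard comparison/renewal argument (or an explicit induction) showing that a sequence dominating such a truncated renewal recursion with total mass $>1$ satisfies $\liminf_t\widetilde\Psi_t>0$ — indeed one can show $\min_{0\le r<K}\widetilde\Psi_{t+r}$ is nondecreasing in $t$ once it is positive, using aperiodicity to guarantee positivity on a full window of $K$ consecutive times for $t$ large. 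Consequently $\liminf_{t\uparrow\infty}(1/t)\ln\widetilde\Psi_t\ge 0$, i.e. $\liminf_{t\uparrow\infty}(1/t)\ln\Psi_t\ge\zeta$, and the arbitrariness of $\zeta<\psi$ yields $\liminf_{t\uparrow\infty}(1/t)\ln\Psi_t\ge\psi$. Combined with the upper bound, the limit exists and equals $\psi$.
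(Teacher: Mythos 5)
Your proposal is correct and follows essentially the same route as the paper: the upper bound is imported from the pre-proposition discussion, and for the lower bound both proofs fix $\zeta<\psi$, truncate the renewal equation to a finite window where the exponentially tilted coefficients $a_s e^{-\zeta s}$ sum to at least $1$, establish positivity of $\Psi_t$ for $t>t_c$ from aperiodicity, and propagate a bound of the form $\Psi_t\ge e^{M+\zeta t}$ by induction over the window. Your explicit tilting $\widetilde\Psi_t=\Psi_t e^{-\zeta t}$ and the "nondecreasing window minimum" phrasing are a cosmetic repackaging of exactly this induction; the only minor deviation is that the paper contents itself with $\sum_{s=1}^\tau a_s e^{-\zeta s}\ge 1$ (so the constant $M$ merely persists, rather than the minimum being strictly nondecreasing), which is enough.
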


\begin{proof}
The bound $\Psi_t\le e^{\psi t}$ for all $t\ge 1$ gives
$\limsup_{t\uparrow\infty}(1/t)\ln \Psi_t\le \psi$. Let us show that
\begin{equation}
\liminf_{t\uparrow\infty}\frac{1}{t}\ln \Psi_t\ge \psi.
\label{theorem:Zlim1}
\end{equation}
We have $\Psi_t\ge\Ex[U_t\prod_{i\ge
    1}(\mathds{1}_{\{T_i>t\}}+V_i\mathds{1}_{\{T_i\le
    t\}})\prod_{i=1}^n\mathds{1}_{\{S_i=s_i\}}]=\prod_{i=1}^na_{s_i}$
if $t=\sum_{i=1}^ns_i$ . This way, the same arguments used in Section
\ref{sec:basics} to deduce $Z_t^c>0$ for all $t>t_c$ yield $\Psi_t>0$
for all $t>t_c$ as $a_{\sigma_l}>0$ by hypothesis for each $l$.  This
property allows us to prove (\ref{theorem:Zlim1}) as follows.  Pick a
real number $\zeta<\psi$ and notice that there exists an integer
$\tau\ge 1$ so that $\sum_{s=1}^\tau a_s e^{-\zeta s}\ge 1$. On the
contrary we would have $A(\zeta)\le 1$, which contradicts the
assumption that $\zeta<\psi$. Since $\Psi_t>0$ for all $t>t_c$, we can
find a constant $M>-\infty$ such that $\ln \Psi_t\ge M+\zeta t$ for
every $t$ satisfying $t_c<t\le t_c+\tau$. As a matter of fact, this
bound is valid for all $t>t_c$. Indeed, an argument by induction based
on the generalized renewal equation (\ref{gen_renewaleq}) shows that
if $t>t_c+\tau$ and $\ln \Psi_{t-s}\ge M+\zeta(t-s)$ for any positive
$s\le\tau$, then
\begin{equation*}
\Psi_t=\sum_{s=1}^ta_s\Psi_{t-s}\ge\sum_{s=1}^\tau a_s\Psi_{t-s}\ge e^{M+\zeta t}\,\sum_{s=1}^\tau a_s e^{-\zeta s}\ge e^{M+\zeta t}.
\end{equation*}
It follows that $\liminf_{t\uparrow\infty}(1/t)\ln \Psi_t\ge \zeta$, giving
(\ref{theorem:Zlim1}) once $\zeta$ is sent to $\psi$.
\end{proof}

The first application of proposition \ref{Zlim} we consider is
concerned with the function $z$ defined by (\ref{defzetafun}). To this
aim we pick a linear functional $\varphi\in\rew^\star$ and we set
$V_i:=e^{\varphi(X_i)+v(S_i)}$ for every $i$. In this case, we have
$a_{\sigma_l}=\Ex[e^{\varphi(X_1)+v(S_1)}\mathds{1}_{\{S_1=\sigma_l\}}]>0$
for each $l$ as $p(\sigma_l)>0$ and
$\Psi_t=\Ex[U_te^{\varphi(W_t)+H_t}]$ for all $t$ since
\begin{equation*}
\prod_{i\ge
  1}\Big[\mathds{1}_{\{T_i>t\}}+e^{\varphi(X_i)+v(S_i)}\mathds{1}_{\{T_i\le
t\}}\Big]=e^{\sum_{i\ge 1}[\varphi(X_i)+v(s_i)]\mathds{1}_{\{T_i\le t\}}}=e^{\varphi(W_t)+H_t}.
\end{equation*}
Moreover, a direct comparison with (\ref{defzetafun}) shows that the
number $\psi$ associated with the present $V_1$ by formula
(\ref{zeta_o}) is nothing but $z(\varphi)$. Consequently, proposition
\ref{Zlim} gives
$\lim_{t\uparrow\infty}(1/t)\ln\Ex[U_te^{\varphi(W_t)+H_t}]=z(\varphi)$
and $z(\varphi)>-\infty$.  It follows from here thanks to the
arbitrariness of $\varphi$ that $z$ is convex and that $z$ never
attains $-\infty$, thus resulting in a proper convex function since
$z$ is finite at least in 0 due to assumption \ref{ass2} as we have
seen at the beginning of Section \ref{par:mainres}. Proposition
\ref{Zlim} also shows that $\Ex[U_te^{\varphi(W_t)+H_t}]\le
e^{z(\varphi)t}$ for all $t\ge 1$. The function $z$ is lower
semicontinuous because if $\{\varphi_i\}_{i\ge 0}$ is a sequence
converging to $\varphi$ and $t$ is any positive integer, then the
bound $e^{z(\varphi_i)t}\ge\Ex[U_te^{\varphi_i(W_t)+H_t}]$ and Fatou's
lemma give $\liminf_{i\uparrow\infty}z(\varphi_i)\ge
\liminf_{i\uparrow\infty}(1/t)\ln\Ex[U_te^{\varphi_i(W_t)+H_t}]\ge
(1/t)\ln\Ex[U_te^{\varphi(W_t)+H_t}]$, which results in
$\liminf_{i\uparrow\infty}z(\varphi_i)\ge z(\varphi)$ when $t$ is sent
to infinity. We have thus proved the following lemma.
\begin{lemma}
\label{zprop}
  The function $z$ is proper convex and lower semicontinuous. Given
  any $\varphi\in\rew^\star$, the bound
  $\Ex[U_te^{\varphi(W_t)+H_t}]\le e^{z(\varphi)t}$ is valid for all
  $t\ge 1$ and the limit
  $\lim_{t\uparrow\infty}(1/t)\ln\Ex[U_te^{\varphi(W_t)+H_t}]=z(\varphi)$
  holds.
\end{lemma}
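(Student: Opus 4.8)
The plan is to deduce the whole lemma from Proposition \ref{Zlim} by the particular choice $V_i:=e^{\varphi(X_i)+v(S_i)}$, exactly as indicated in the paragraph preceding the statement. First I would fix an arbitrary $\varphi\in\rew^\star$ and verify that the non-negative, $[0,\infty)$-valued, i.i.d.\ sequence $(S_1,V_1),(S_2,V_2),\ldots$ with $V_i:=e^{\varphi(X_i)+v(S_i)}$ fits the framework of Section \ref{par:renseq}: the value $+\infty$ is excluded since $\varphi(X_i)$ and $v(S_i)$ are real, and the hypothesis of Proposition \ref{Zlim} holds because $a_{\sigma_l}=\Ex[e^{\varphi(X_1)+v(S_1)}\mathds{1}_{\{S_1=\sigma_l\}}]>0$ for each of the coprime integers $\sigma_1,\ldots,\sigma_m$, as $p(\sigma_l)>0$ by aperiodicity (Assumption \ref{ass1}). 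Next I would record the telescoping identity $\prod_{i\ge1}(\mathds{1}_{\{T_i>t\}}+e^{\varphi(X_i)+v(S_i)}\mathds{1}_{\{T_i\le t\}})=e^{\varphi(W_t)+H_t}$, which holds because on the relevant event only finitely many factors differ from $1$ and the exponents add, so that $\Psi_t=\Ex[U_te^{\varphi(W_t)+H_t}]$. Comparing $A(\zeta)=\Ex[V_1e^{-\zeta S_1}\mathds{1}_{\{S_1<\infty\}}]=\Ex[e^{\varphi(X_1)+v(S_1)-\zeta S_1}\mathds{1}_{\{S_1<\infty\}}]$ with (\ref{defzetafun}) shows that the threshold $\psi$ of (\ref{zeta_o}) coincides with $z(\varphi)$. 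Proposition \ref{Zlim} then immediately gives the stated limit $\lim_{t\uparrow\infty}(1/t)\ln\Ex[U_te^{\varphi(W_t)+H_t}]=z(\varphi)$, the fact that $z(\varphi)>-\infty$, and the bound $\Ex[U_te^{\varphi(W_t)+H_t}]\le e^{z(\varphi)t}$ for all $t\ge1$.

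To obtain convexity and properness of $z$, I would note that for each fixed $t\ge1$ the map $\varphi\mapsto\Ex[U_te^{\varphi(W_t)+H_t}]$ is log-convex: writing $(\lambda\varphi_1+(1-\lambda)\varphi_2)(W_t)$ and applying H\"older's inequality with exponents $1/\lambda$ and $1/(1-\lambda)$ under the finite measure $\Ex[\mathds{1}_{\{\cdot\}}U_te^{H_t}]$ yields $\Ex[U_te^{(\lambda\varphi_1+(1-\lambda)\varphi_2)(W_t)+H_t}]\le\Ex[U_te^{\varphi_1(W_t)+H_t}]^{\lambda}\Ex[U_te^{\varphi_2(W_t)+H_t}]^{1-\lambda}$, hence $\varphi\mapsto(1/t)\ln\Ex[U_te^{\varphi(W_t)+H_t}]$ is convex (allowing the value $+\infty$). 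Since $z$ is the pointwise limit as $t\uparrow\infty$ of these convex functions, $z$ is convex. As $z$ never attains $-\infty$ by the above and $z(0)<\infty$ was already established at the beginning of Section \ref{par:mainres} from Assumption \ref{ass2}, $z$ is a proper convex function.

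Finally, lower semicontinuity of $z$ I would get from Fatou's lemma. If $\{\varphi_i\}_{i\ge0}$ converges to $\varphi$ in $\rew^\star$, then $\varphi_i(W_t)\to\varphi(W_t)$ pointwise on $\Omega$, since norm convergence of functionals forces pointwise convergence on the fixed vector $W_t(\omega)$; therefore $\liminf_{i\uparrow\infty}\Ex[U_te^{\varphi_i(W_t)+H_t}]\ge\Ex[U_te^{\varphi(W_t)+H_t}]$ by Fatou. Combining this with the inequality $z(\varphi_i)\ge(1/t)\ln\Ex[U_te^{\varphi_i(W_t)+H_t}]$ from Proposition \ref{Zlim} and the continuity and monotonicity of the logarithm gives $\liminf_{i\uparrow\infty}z(\varphi_i)\ge(1/t)\ln\Ex[U_te^{\varphi(W_t)+H_t}]$ for every positive integer $t$, and letting $t\uparrow\infty$ yields $\liminf_{i\uparrow\infty}z(\varphi_i)\ge z(\varphi)$.

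The only real work here is in the reduction step rather than in the soft functional analysis: one must be careful to confirm that $V_i:=e^{\varphi(X_i)+v(S_i)}$ genuinely satisfies all the standing hypotheses of Proposition \ref{Zlim} — measurability, $[0,\infty)$-valuedness, and positivity of the coefficients $a_{\sigma_l}$ — and that the threshold $\psi$ in (\ref{zeta_o}) indeed matches $z(\varphi)$ from (\ref{defzetafun}), including the presence of the indicator $\mathds{1}_{\{S_1<\infty\}}$. Once that identification is secured, convexity (H\"older), properness ($\psi>-\infty$ together with $z(0)<\infty$), and lower semicontinuity (Fatou) all follow without further difficulty.
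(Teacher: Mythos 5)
Your proof is correct and takes essentially the same route as the paper: reduction to Proposition \ref{Zlim} via $V_i:=e^{\varphi(X_i)+v(S_i)}$, identification of $\psi$ with $z(\varphi)$, the bound and the limit read off directly, properness from $z(\varphi)>-\infty$ together with $z(0)<\infty$, and lower semicontinuity via Fatou. The only substantive addition is that you spell out the H\"older inequality behind the convexity of $\varphi\mapsto(1/t)\ln\Ex[U_te^{\varphi(W_t)+H_t}]$ and then pass to the pointwise limit, a step the paper compresses into ``thanks to the arbitrariness of $\varphi$ $z$ is convex''; that is exactly the intended argument.
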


\subsection{Connection with the function $z$}
\label{par:cI}

In this section we prove that the rate function $J$ is the
Fenchel-Legendre transform of $z$, namely that
$J(w)=\sup_{\varphi\in\rew^\star}\{\varphi(w)-z(\varphi)\}$ for all
$w\in\rew$. Lemma \ref{Jprop} states that $J$ is convex and lower
semicontinuous. Actually, $J$ is proper convex. Indeed, by combining
lemma \ref{start} with $C:=\rew$ and lemma \ref{zprop} with
$\varphi:=0$ we get
$\mathcal{L}(\rew)=\lim_{t\uparrow\infty}(1/t)\ln\mu_t(\rew)=\lim_{t\uparrow\infty}(1/t)\ln\Ex[U_te^{H_t}]=z(0)$.
This way, part (i) of proposition \ref{WLDP} with $G:=\rew$ gives
$z(0)\ge-\inf_{w\in\rew}\{J(w)\}$ and lemma \ref{lemma_conv} with
$C:=\rew$ yields $z(0)\le-\inf_{w\in\rew}\{J(w)\}$, with the
consequence that $\inf_{w\in\rew}\{J(w)\}=-z(0)$. As $z(0)$ is finite,
this equality shows that $J$ is finite at some point and that it never
attains $-\infty$. Proper convexity and lower semicontinuity allow us
to express $J$ in terms of its convex conjugate $J^\star$ as follows
(see \cite{Zalinescu}, theorem 2.3.3):
\begin{equation}
\label{Jstar}
J(w)=\sup_{\varphi\in\rew^\star}\Big\{\varphi(w)-J^\star(\varphi)\Big\}
\end{equation}
  for every $w\in\rew$ with
  $J^\star(\varphi):=\sup_{w\in\rew}\{\varphi(w)-J(w)\}$ for all
  $\varphi\in\rew^\star$.  This way, in order to demonstrate that $J$
  is the Fenchel-Legendre transform of $z$ it suffices to show that
  $J^\star=z$. Basically, this argument is the same argument used by
  Cerf and Petit \cite{Petit} for a short proof of Cram\'er's theorem
  in $\Rl$.

Proving the bound $J^\star(\varphi)\le z(\varphi)$ for all
$\varphi\in\rew^\star$ is not difficult.  To do this, we fix
$\varphi\in\rew^\star$ and we observe that lemma \ref{zprop} together
with the fact that $\varphi(W_t-tw)\ge-\|W_t-tw\|\|\varphi\|\ge
-t\delta\|\varphi\|$ if $W_t/t\in B_{w,\delta}$ gives for every $t\ge
1$, $w\in\rew$, and $\delta>0$
\begin{eqnarray}
  \nonumber
  e^{z(\varphi) t}\ge\Ex\Big[U_t e^{\varphi(W_t)+H_t}\Big]
  &\ge&\Ex\bigg[\mathds{1}_{\big\{\frac{W_t}{t}\in B_{w,\delta}\big\}}U_t e^{\varphi(W_t)+H_t}\bigg]\\
  \nonumber
  &=& e^{t\varphi(w)}\,\Ex\bigg[\mathds{1}_{\big\{\frac{W_t}{t}\in B_{w,\delta}\big\}}U_t e^{\varphi(W_t-tw)+H_t}\bigg]\\
  \nonumber
  &\ge&e^{t\varphi(w)-t\delta\|\varphi\|}\,\Ex\bigg[\mathds{1}_{\big\{\frac{W_t}{t}\in B_{w,\delta}\big\}}U_t e^{H_t}\bigg]\\
\nonumber
  &=&e^{t\varphi(w)-t\delta\|\varphi\|}\,\mu_t(B_{w,\delta}).
\end{eqnarray}
Taking logarithms, dividing by $t$, and sending $t$ to infinity, we
get from here
$z(\varphi)\ge\varphi(w)+\mathcal{L}(B_{w,\delta})-\delta\|\varphi\|\ge
\varphi(w)-J(w)+\delta\|\varphi\|$ thanks to lemma \ref{start}. Thus,
sending $\delta$ to zero first and appealing to the arbitrariness of
$w$ later we reach the bound
$z(\varphi)\ge\sup_{w\in\rew}\{\varphi(w)-J(w)\}=:J^\star(\varphi)$. A
more sophisticated use of proposition \ref{Zlim} leads to the opposite
bound, and hence to equality as stated by the following proposition.
\begin{proposition}
  \label{zeta_J}
  The convex conjugate $J^\star$ of $J$ equals $z$.
\end{proposition}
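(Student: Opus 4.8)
We already showed $z\ge J^\star$ above, so the task reduces to proving $J^\star(\varphi)\ge z(\varphi)$ for every $\varphi\in\rew^\star$, equivalently — since $J(w)\ge\varphi(w)-J^\star(\varphi)$ — exhibiting, for each $\varphi$, a point $w$ (or a sequence of points) realizing the value $z(\varphi)$ in the sense that $\varphi(w)-J(w)$ is close to $z(\varphi)$. The plan is to exploit Lemma~\ref{zprop}, which tells us $\lim_{t\uparrow\infty}(1/t)\ln\Ex[U_te^{\varphi(W_t)+H_t}]=z(\varphi)$, and to localize the mass of the tilted measure near its "typical value''. Concretely, for a fixed $\varphi$ introduce the tilted quantities $\nu_t:=\Ex[U_te^{\varphi(W_t)+H_t}]$ and, for Borel $C$, $\nu_t(C):=\Ex[\mathds{1}_{\{W_t/t\in C\}}U_te^{\varphi(W_t)+H_t}]$, so that $\nu_t(\rew)=\nu_t$ and $(1/t)\ln\nu_t(\rew)\to z(\varphi)$. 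The goal is to show that for some $w$ and every $\delta>0$, $\limsup_t(1/t)\ln\nu_t(B_{w,\delta})\ge z(\varphi)$, for then the bound $\nu_t(B_{w,\delta})\le e^{t\delta\|\varphi\|+t\varphi(w)}\mu_t(B_{w,\delta})$ (obtained exactly as in the displayed computation preceding the proposition, now used in the other direction) gives $z(\varphi)\le\delta\|\varphi\|+\varphi(w)+\mathcal{L}(B_{w,\delta})\le\delta\|\varphi\|+\varphi(w)-J(w)$, and letting $\delta\downarrow0$ yields $z(\varphi)\le\varphi(w)-J(w)\le J^\star(\varphi)$.

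To produce such a $w$ I would run a super-additivity/sub-additivity argument directly on the tilted measures $\nu_t(\cdot)$, mirroring Section~\ref{par:existence}. The family $\nu_t$ satisfies the same super-multiplicativity on convex sets as $\mu_t$ — the proof of Lemma~1 goes through verbatim with $e^{H_t}$ replaced by $e^{\varphi(W_t)+H_t}$, since the decomposition $\varphi(W_{\tau+t})=\varphi(W_\tau)+\varphi(\Delta_\tau^tW)$ and $H_{\tau+t}=H_\tau+\Delta_\tau^tH$ are additive and the convexity-of-$C$ step is identical. Hence $\mathcal{L}^\varphi(C):=\sup_{t>t_c}(1/t)\ln\nu_t(C)$ equals $\limsup_t(1/t)\ln\nu_t(C)$ for convex $C$, and the analogue of Lemma~\ref{start} holds: for open convex $C$ the limit exists and equals $\mathcal{L}^\varphi(C)$. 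In particular $\mathcal{L}^\varphi(\rew)=z(\varphi)$. Now the function $J_\varphi(w):=-\inf_{\delta>0}\mathcal{L}^\varphi(B_{w,\delta})$ is, by the analogue of Proposition~\ref{WLDP}(ii) and Lemma~\ref{lemma_conv}, a rate function for $\nu_t$ on compact sets, and by tightness of the bounded measures $\nu_\tau$ (each is $\le e^{z(\varphi)\tau}<\infty$) combined with the compact-convex-hull construction of Lemma~\ref{lemma_conv}, one gets $\inf_{w\in\rew}\{J_\varphi(w)\}=-\mathcal{L}^\varphi(\rew)=-z(\varphi)$. The infimum need not be attained, but it is enough to pick $w$ with $J_\varphi(w)$ within $\epsilon$ of $-z(\varphi)$, equivalently with $\mathcal{L}^\varphi(B_{w,\delta})\ge z(\varphi)-\epsilon$ for all $\delta>0$; then $\nu_t(B_{w,\delta})\ge e^{t(z(\varphi)-2\epsilon)}$ eventually, and the tilt-comparison bound above gives $z(\varphi)-2\epsilon\le\delta\|\varphi\|+\varphi(w)-J(w)$. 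Letting $\delta\downarrow0$ and $\epsilon\downarrow0$ gives $z(\varphi)\le J^\star(\varphi)$.

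I expect the main obstacle to be the non-attainment of the infimum and, relatedly, making sure that the relation $J(w)\le-\mathcal{L}^\varphi(B_{w,\delta})+\varphi(w)$ connecting the two rate functions $J$ and $J_\varphi$ is pinned down cleanly. The cleanest route is probably to avoid introducing $J_\varphi$ as a separate object: instead argue directly that $\sup_{w}\{\varphi(w)-J(w)\}\ge z(\varphi)$ by the following contrapositive. Suppose $J^\star(\varphi)=\sup_w\{\varphi(w)-J(w)\}<z(\varphi)$; call this value $c$, so $J(w)\ge\varphi(w)-c$ for all $w$. Cover $\rew$ by balls on which $\nu_t$ decays: for each $w$ choose $\delta_w>0$ with $\mathcal{L}^\varphi(B_{w,\delta_w})<c'$ for some fixed $c'\in(c,z(\varphi))$ — this is possible because $\mathcal{L}^\varphi(B_{w,\delta})=\varphi(w)+\mathcal{L}(B_{w,\delta})+o(\delta)\le\varphi(w)-J(w)+2\delta\|\varphi\|\le c+2\delta\|\varphi\|$ by the tilt-comparison bound and the standing hypothesis. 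On a compact set $K$ obtained from tightness of $\nu_\tau$ (chosen so that $\nu_\tau(K)\ge\frac12\nu_\tau(\rew)$ for some large $\tau$ with $(1/\tau)\ln\nu_\tau(\rew)$ close to $z(\varphi)$), extract a finite subcover, giving $\nu_\tau(K)\le\sum_{i=1}^n\nu_\tau(B_{w_i,\delta_{w_i}})$. Using the analogue of Lemma~\ref{start} ($\limsup_t(1/t)\ln\nu_t(B_{w_i,\delta_{w_i}})=\mathcal{L}^\varphi(B_{w_i,\delta_{w_i}})<c'$) to control each term for all large $\tau$ — here I would use super-additivity in the form $\nu_\tau(B_{w_i,\delta_{w_i}})\le e^{\tau\,\mathcal{L}^\varphi(B_{w_i,\delta_{w_i}})}\le e^{\tau c'}$ — yields $\nu_\tau(K)\le n e^{\tau c'}$, hence $(1/\tau)\ln\nu_\tau(\rew)\le(1/\tau)\ln(2ne^{\tau c'})\to c'<z(\varphi)$, contradicting Lemma~\ref{zprop}. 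This contradiction gives $J^\star(\varphi)\ge z(\varphi)$, and with the reverse inequality already in hand, $J^\star=z$.
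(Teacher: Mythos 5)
Your idea of running the Section~\ref{par:existence} machinery (super-multiplicativity of convex-set masses, Lemma~\ref{start}, Proposition~\ref{WLDP}) directly on the tilted measures $\nu_t^\varphi(C):=\Ex[\mathds{1}_{\{W_t/t\in C\}}U_te^{\varphi(W_t)+H_t}]$ is sound as far as it goes, and the tilt-comparison bound $\mathcal{L}^\varphi(B_{w,\delta})\le\varphi(w)+\delta\|\varphi\|+\mathcal{L}(B_{w,\delta})$ is correct. But both of your variants break at the localization step. In the contrapositive argument, the compact set $K$ is obtained from tightness of $\nu_\tau^\varphi$ at one fixed $\tau$, so $K$ and the size $n$ of the finite subcover both depend on $\tau$. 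The concluding display $(1/\tau)\ln(2ne^{\tau c'})\to c'$ treats $n$ as a constant while $\tau\to\infty$; with $n=n_\tau$ the inequality $(1/\tau)\ln\nu_\tau^\varphi(\rew)\le c'+(1/\tau)\ln(2n_\tau)$ is perfectly compatible with $(1/\tau)\ln\nu_\tau^\varphi(\rew)\to z(\varphi)>c'$ and yields no contradiction. To make this route work you would need a single compact set carrying a definite fraction of $\nu_\tau^\varphi(\rew)$ along a sequence $\tau\to\infty$, which is a form of exponential tightness of $\{\nu_t^\varphi\}$ --- precisely what is not available and is close to what you are trying to prove. Your first variant (via $J_\varphi$ and the $\nu_t^\varphi$-analogue of Lemma~\ref{lemma_conv}) has the further problem that the tightness step needs $\nu_\tau^\varphi(\rew)<\infty$ for some $\tau>t_c$, which you get from $\nu_\tau^\varphi(\rew)\le e^{z(\varphi)\tau}$ only when $z(\varphi)<\infty$; when $z(\varphi)=\infty$ the proposition still has to deliver $J^\star(\varphi)=\infty$, and the argument is silent on that case.

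The paper sidesteps both obstructions with a different localization mechanism, which is the real content of the proof. Rather than forcing $W_t/t$ to concentrate on a compact set, it truncates the one-step ``speeds'' $X_i/S_i$ on a fixed compact convex set $K$, chosen via tightness of the \emph{truncated one-step} measure $\pi_R:=\Ex\big[e^{\varphi(X_1)+v(S_1)-\zeta S_1}\mathds{1}_{\{X_1/S_1\in\cdot\}}\mathds{1}_{\{\|X_1\|\le R,\,S_1\le R\}}\big]$, a finite measure even when $z(\varphi)=\infty$. Writing $W_t/t=\sum_{i}(X_i/S_i)(S_i/t)\mathds{1}_{\{T_i\le t\}}$ and noting that when $t$ is a renewal the weights $S_i/t$ sum to $1$, the event that $X_i/S_i\in K$ for all $i$ with $T_i\le t$ forces $W_t/t\in K$ by convexity. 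This converts a $t$-uniform localization into a one-step localization; Proposition~\ref{Zlim} applied to $V_i:=e^{\varphi(X_i)+v(S_i)}\mathds{1}_{\{X_i/S_i\in K\}}$ then lower-bounds the exponential rate of $\Ex[\mathds{1}_{\{W_t/t\in K\}}U_te^{\varphi(W_t)+H_t}]$ by any $\zeta<z(\varphi)$. Combined with the upper bound for a fixed compact (your covering estimate, essentially the paper's first step), this yields $J^\star(\varphi)\ge z(\varphi)$. The use of aperiodicity, one-step tightness, convexity of $K$, and the generalized renewal equation to manufacture a single, $t$-independent compact is the ingredient missing from your proposal.
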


\begin{proof}
  Pick a linear functional $\varphi\in\rew^\star$. As $z(\varphi)\ge
  J^\star(\varphi)$, in order to show that
  $z(\varphi)=J^\star(\varphi)$ we must prove that $z(\varphi)\le
  J^\star(\varphi)$. Assume that $J^\star(\varphi)<+\infty$, otherwise
  there is nothing to prove. We are going to obtain the bound
  $z(\varphi)\le J^\star(\varphi)$ in two steps. At first we verify
  that for each $K\subseteq\rew$ compact
\begin{equation}
\limsup_{t\uparrow\infty}\frac{1}{t}\ln\Ex\bigg[\mathds{1}_{\big\{\frac{W_t}{t}\in K \big\}}U_t e^{\varphi(W_t)+H_t}\bigg]
\le J^\star(\varphi).
\label{zeta_J_1}
\end{equation}
Then, we demonstrate that for each real number $\zeta<z(\varphi)$
there exists a compact convex set $K\subseteq\rew$ with the property
that
\begin{equation}
\zeta<\limsup_{t\uparrow\infty}\frac{1}{t}\ln\Ex\bigg[\mathds{1}_{\big\{\frac{W_t}{t}\in K \big\}}U_te^{\varphi(W_t)+H_t}\bigg].
  \label{zeta_J_3}
\end{equation}
The proposition follows by combining (\ref{zeta_J_3}) with
(\ref{zeta_J_1}) first and by sending $\zeta$ to $z(\varphi)$ later.

Let us prove (\ref{zeta_J_1}) for a given compact set $K$ in $\rew$.
Let $\lambda>J^\star(\varphi)$ and $\rho>0$ be two real numbers. Since
there exists $\epsilon>0$ such that
$\varphi(w)+\inf_{\delta>0}\{\mathcal{L}(B_{w,\delta})\}=\varphi(w)-J(w)\le
J^\star(\varphi)\le \lambda-\epsilon$ for all $w$, for each $w\in\rew$
we can find $\delta_w>0$ in such a way that $\delta_w\|\varphi\|<\rho$
and $\mathcal{L}(B_{w,\delta_w})\le \lambda-\varphi(w)$.  Lemma
\ref{start} gives
$\lim_{t\uparrow\infty}(1/t)\ln\mu_t(B_{w,\delta_w})\le
\lambda-\varphi(w)$ for such $\delta_w$. Furthermore, we have
$\varphi(W_t-tw)\le\|W_t-tw\|\|\varphi\|\le
t\delta_w\|\varphi\|<t\rho$ if $W_t/t\in B_{w,\delta_w}$.  From the
compactness of $K$ there exist finitely many points $w_1,\ldots,w_n$
in $K$ so that $K\subseteq\cup_{i=1}^n B_{w_i,\delta_{w_i}}$. It
follows that for all $t\ge 1$
\begin{eqnarray}
\nonumber
\Ex\bigg[\mathds{1}_{\big\{\frac{W_t}{t}\in K \big\}}U_t e^{\varphi(W_t)+H_t}\bigg]&\le&
\sum_{i=1}^n\Ex\bigg[\mathds{1}_{\big\{\frac{W_t}{t}\in B_{w_i,\delta_{w_i}}\big\}}U_t e^{\varphi(W_t)+H_t}\bigg]\\
\nonumber
&=&\sum_{i=1}^ne^{t\varphi(w_i)}\,\Ex\bigg[\mathds{1}_{\big\{\frac{W_t}{t}\in B_{w_i,\delta_{w_i}}\big\}}U_t e^{\varphi(W_t-tw_i)+H_t}\bigg]\\
\nonumber
&\le&\sum_{i=1}^n\mu_t(B_{w_i,\delta_{w_i}})e^{t \varphi(w_i)+t\rho}.
\end{eqnarray}
By combining this bound with
$\lim_{t\uparrow\infty}(1/t)\ln\mu_t(B_{w_i,\delta_{w_i}})\le
\lambda-\varphi(w_i)$ for each $i$ we find
\begin{equation*}
\limsup_{t\uparrow\infty}\frac{1}{t}\ln\Ex\bigg[\mathds{1}_{\big\{\frac{W_t}{t}\in K \big\}}U_t e^{\varphi(W_t)+H_t}\bigg]\le \lambda +\rho.
\end{equation*}
This way, we reach (\ref{zeta_J_1}) by sending $\lambda$ to
$J^\star(\varphi)$ and $\rho$ to 0.

We now verify (\ref{zeta_J_3}). Pick a real number $\zeta<z(\varphi)$
and observe that necessarily $\Ex[e^{\varphi(X_1)+v(S_1)-\zeta
    S_1}\mathds{1}_{\{S_1<\infty\}}]>1$ by definition of
$z(\varphi)$. Recall that
$\Ex[e^{\varphi(X_1)+v(S_1)}\mathds{1}_{\{S_1=\sigma_l\}}]>0$ for all
$l$ since the $m$ coprime integers $\sigma_1,\ldots,\sigma_m$ satisfy
$p(\sigma_l)>0$ for every $l$. We shall show at the end that there
exists a compact convex set $K\subseteq\rew$ such that
\begin{equation}
\Ex\Big[e^{\varphi(X_1)+v(S_1)-\zeta S_1}\mathds{1}_{\{X_1/S_1\in K\}}\mathds{1}_{\{S_1<\infty\}}\Big]>1
\label{zeta_J_4}
\end{equation}
and 
\begin{equation}
\Ex\Big[e^{\varphi(X_1)+v(S_1)}\mathds{1}_{\{X_1/S_1\in K\}}\mathds{1}_{\{S_1=\sigma_l\}}\Big]>0
\label{zeta_J_5}
\end{equation}
for each $l$.  This way, setting
$V_i:=e^{\varphi(X_i)+v(S_i)}\mathds{1}_{\{X_i/S_i\in K\}}$ for all
$i$ and introducing the number $\psi$ defined by
\begin{equation*}
\psi:=\inf\bigg\{\eta\in\Rl\,:\,\Ex\Big[V_1e^{-\eta S_1}\mathds{1}_{\{S_1<\infty\}}\Big]\le 1\bigg\},
\end{equation*}
we have $\zeta<\psi$ from (\ref{zeta_J_4}). At the same time, if
$a_s:=\Ex[V_1\mathds{1}_{\{S_1=s\}}]$ for all $s$, then (\ref{zeta_J_5})
gives $a_{\sigma_l}>0$ for each $l$.  Consequently, we can invoke
proposition \ref{Zlim} with the present $V_i$ to get
\begin{eqnarray}
  \nonumber
  \zeta<\psi&=&\lim_{t\uparrow\infty}\frac{1}{t}\ln\Ex\bigg[U_t\prod_{i\ge 1}\Big(\mathds{1}_{\{T_i>t\}}+V_i\mathds{1}_{\{T_i\le t\}}\Big)\bigg]\\
  \nonumber
  &=&\lim_{t\uparrow\infty}\frac{1}{t}
  \ln\Ex\bigg[U_te^{\varphi(W_t)+H_t}\prod_{i\ge 1}\Big(\mathds{1}_{\{T_i>t\}}+\mathds{1}_{\{X_i/S_i\in K\}}\mathds{1}_{\{T_i\le t\}}\Big)\bigg].
\end{eqnarray}
On the other hand, as $K$ is convex, the condition $X_i/S_i\in K$ for
all $i$ such that $T_i\le t$ entails $W_t/t\in K$ when $t$ is a
renewal. To understand this point, we write $W_t/t=\sum_{i\ge
  1}(X_i/S_i)(S_i/t)\mathds{1}_{\{T_i\le t\}}$ and we notice that when
there exists a positive integer $n$ such that $T_n=t$, then
$\sum_{i\ge 1}(S_i/t)\mathds{1}_{\{T_i\le
  t\}}=\sum_{i=1}^n(S_i/t)=T_n/t=1$.  It follows that
\begin{eqnarray}
  \nonumber
  \zeta&<&\lim_{t\uparrow\infty}\frac{1}{t}
  \ln\Ex\bigg[U_te^{\varphi(W_t)+H_t}\prod_{i\ge 1}\Big(\mathds{1}_{\{T_i>t\}}+\mathds{1}_{\{X_i/S_i\in K\}}\mathds{1}_{\{T_i\le t\}}\Big)\bigg]\\
  \nonumber
  &=&\lim_{t\uparrow\infty}\frac{1}{t}
  \ln\Ex\bigg[U_te^{\varphi(W_t)+H_t}\,\mathds{1}_{\big\{\frac{W_t}{t}\in K \big\}}
    \prod_{i\ge 1}\Big(\mathds{1}_{\{T_i>t\}}+\mathds{1}_{\{X_i/S_i\in K\}}\mathds{1}_{\{T_i\le t\}}\Big)\bigg]\\
  \nonumber
  &\le&\limsup_{t\uparrow\infty}\frac{1}{t}\ln\Ex\bigg[U_te^{\varphi(W_t)+H_t}\,\mathds{1}_{\big\{\frac{W_t}{t}\in K \big\}}\bigg],
\end{eqnarray}
which proves (\ref{zeta_J_3}).

To conclude the proof of the proposition, we must show the validity of
(\ref{zeta_J_4}) and (\ref{zeta_J_5}) for some compact convex set
$K$. To this aim, consider the finite measure
$\pi_R:=\Ex[e^{\varphi(X_1)+v(S_1)-\zeta
    S_1}\mathds{1}_{\{X_1/S_1\in\cdot\}}\mathds{1}_{\{\|X_1\|\le
    R\}}\mathds{1}_{\{S_1\le R\}}]$ on $\Brew$, $R$ being a positive
real number. The fact that $\Ex[e^{\varphi(X_1)+v(S_1)-\zeta
    S_1}\mathds{1}_{\{S_1<\infty\}}]>1$ implies that there exists a
sufficiently large $R$ so that $\pi_R(\rew)>1$ and completeness and
separability of $\rew$ entail that $\pi_R$ is tight (see
\cite{Bogachevbook}, theorem 7.1.7). It follows that there exists a
compact set $K_o$ such that $\pi_R(K_o)>1$, which gives
$\Ex[e^{\varphi(X_1)+v(S_1)-\zeta S_1}\mathds{1}_{\{X_1/S_1\in
    K_o\}}\mathds{1}_{\{S_1<\infty\}}]\ge \pi_R(K_o)>1$. Similar
arguments with
$\pi_R:=\Ex[e^{\varphi(X_1)+v(S_1)}\mathds{1}_{\{X_1/S_1\in\cdot\}}\mathds{1}_{\{\|X_1\|\le
    R\}}\mathds{1}_{\{S_1=\sigma_l\}}]$ in combination with
$\Ex[e^{\varphi(X_1)+v(S_1)}\mathds{1}_{\{S_1=\sigma_l\}}]>0$ yield
$\Ex[e^{\varphi(X_1)+v(S_1)}\mathds{1}_{\{X_1/S_1\in
    K_l\}}\mathds{1}_{\{S_1=\sigma_l\}}]>0$ for some compact set
$K_l$.  Let $K$ be the closed convex hull of $K_o\cup
K_1\cup\cdots\cup K_m$. The set $K$ is convex and compact (see
\cite{Rudin}, theorem 3.20) and satisfies (\ref{zeta_J_4}) and
(\ref{zeta_J_5}) as $K_o\subseteq K$ and $K_l\subseteq K$ for each
$l$.
\end{proof}

\subsection{Proposition \ref{proposition:free_energy} and theorem \ref{mainth1} point by point}
\label{par:th1pp}

In this section we explicitly verify proposition
\ref{proposition:free_energy} and theorem \ref{mainth1} point by
point, but the former simply is part of lemma \ref{zprop} and does not
need other demonstrations.  Lemma \ref{Jprop} states that $J$ is
convex and lower semicontinuous. Moreover, we have seen that $J$ is
proper convex at the beginning of the last section. As
$J(w)=\sup_{\varphi\in\rew^\star}\{\varphi(w)-z(\varphi)\}$ for all
$w\in\rew$ thanks to (\ref{Jstar}) and proposition \ref{zeta_J}, the
rate function $I$ defined by (\ref{defIorate}) equals $J+z(0)$ and
inherits the lower semicontinuity and proper convexity of $J$. These
facts prove part (a) of theorem \ref{mainth1}.  Part (b) of theorem
\ref{mainth1} follows from part (i) of propositions \ref{WLDP} bearing
in mind that $\ln\prob_t[W_t/t\in \cdot\,\,]=\ln\mu_t-\ln Z_t^c$ for
each $t>t_c$, that $\lim_{t\uparrow\infty}(1/t)\ln
Z_t^c=\lim_{t\uparrow\infty}\Ex[U_te^{H_t}]=z(0)$ by lemma
\ref{zprop}, and that $I=J+z(0)$.  Similarly, part (c) of theorem
\ref{mainth1} concerning compact sets is due to part (ii) of
proposition \ref{WLDP}.  Part (c) regarding convex sets follows from
the limit $\limsup_{t\uparrow\infty}(1/t)\ln\mu_t(C)=\mathcal{L}(C)$
valid for any $C\in\Brew$ convex and lemma \ref{lemma_conv}.  Finally,
part (c) for closed sets in the finite-dimensional case is
demonstrated by the following proposition.  Let us observe that the
exponential moment condition $\Ex[e^{\xi\|X_1\|+v(S_1)-\zeta
    S_1}\mathds{1}_{\{S_1<\infty\}}]<+\infty$ for some numbers
$\zeta\ge 0$ and $\xi>0$ implies $z(\varphi)<+\infty$ for all
$\varphi\in\rew^\star$ such that $\|\varphi\|\le\xi$. Indeed, the
validity of this condition with certain $\zeta\ge 0$ and $\xi>0$
entails that a number $h$ large enough can be found so that
$e^{-h}\,\Ex[e^{\xi\|X_1\|+v(S_1)-\zeta
    S_1}\mathds{1}_{\{S_1<\infty\}}]\le 1$. It follows that
$\Ex[e^{\varphi(X_1)+v(S_1)-(\zeta+h)
    S_1}\mathds{1}_{\{S_1<\infty\}}]\le
e^{-h}\,\Ex[e^{\xi\|X_1\|+v(S_1)-\zeta
    S_1}\mathds{1}_{\{S_1<\infty\}}]\le 1$ if $\|\varphi\|\le\xi$ as
$S_1\ge 1$ with full probability, which gives $z(\varphi)<+\infty$
according to definition (\ref{defzetafun}). It is easy to verify that
if $\rew$ is finite-dimensional, then the above exponential moment
condition is tantamount to the existence of $\xi>0$ such that
$z(\varphi)<+\infty$ for all $\varphi$ fulfilling $\|\varphi\|\le\xi$.
\begin{proposition}
\label{closed_constrained}
Assume that $\rew$ has finite dimension and that there exist numbers
$\zeta\ge 0$ and $\xi>0$ such that $\Ex[e^{\xi\|X_1\|+v(S_1)-\zeta
    S_1}\mathds{1}_{\{S_1<\infty\}}]<+\infty$. Then, for each
$F\subseteq\rew$ closed
\begin{equation*}
  \limsup_{t\uparrow\infty}\frac{1}{t}\ln\mu_t(F)\le-\inf_{w\in F}\{J(w)\}.
\end{equation*}
\end{proposition}

\begin{proof}
  Fix a closed set $F$ in $\rew$ and observe that $\inf_{w\in
    F}\{J(w)\}\ge\inf_{w\in \rew}\{J(w)\}=-z(0)>-\infty$.  Then, pick
  a real number $\lambda<\inf_{w\in F}\{J(w)\}$.  Let $d$ be the
  dimension of $\rew$, let $\{w_1,\ldots,w_d\}$ be a basis of $\rew$,
  and let $\{\vartheta_1,\ldots,\vartheta_d\}\subset\rew^\star$ be the
  dual basis: $\vartheta_i(w_j)$ equals 1 if $i=j$ and 0 otherwise for
  all $i$ and $j$. For $i$ ranging from $1$ to $d$, set
  $\varphi_i:=\vartheta_i/\|\vartheta_i\|$ and
  $\varphi_{d+i}:=-\varphi_i$. Since $z(\varphi)<+\infty$ if
  $\|\varphi\|\le\xi$, $\xi>0$ being the number associated with the
  hypothesized exponential moment condition, there exists a real
  number $\rho>0$ with the property that $z(\xi\varphi_i)-\xi\rho\le
  -\lambda$ for each $i$.  Denoting by $K$ the compact set
  $K:=\cap_{i=1}^{2d}\{w\in\rew:\varphi_i(w)\le \rho\}$, we have
  $K^c=\cup_{i=1}^{2d}\{w\in\rew:\varphi_i(w)> \rho\}$.  This way, by
  making use of the Chernoff bound first and of the bound
  $\Ex[U_te^{\xi\varphi_i(W_t)+H_t}]\le e^{z(\xi\varphi_i)t}$ due to
  lemma \ref{zprop} later, we obtain
\begin{eqnarray}
  \nonumber
  \mu_t(K^c)&\le&\sum_{i=1}^{2d} \mu_t\Big(\big\{w\in\rew:\varphi_i(w)>\rho\big\}\Big)
  =\sum_{i=1}^{2d}\Ex\Big[\mathds{1}_{\{\varphi_i(W_t)>\rho t\}}U_te^{H_t}\Big]\\
  \nonumber
  &\le&\sum_{i=1}^{2d}\Ex\Big[U_te^{\xi\varphi_i(W_t)-\xi\rho t+H_t}\Big]\le\sum_{i=1}^de^{z(\xi\varphi_i) t-\xi\rho t}\le 2d e^{-\lambda t},
\end{eqnarray}
giving $\mu_t(F)=\mu_t(F\cap K)+\mu_t(F\cap K^c)\le\mu_t(F\cap K)+2d
e^{-\lambda t}$ for each $t$. On the other hand, part (ii) of
proposition \ref{WLDP} with the compact set $F\cap K$ shows that
$\limsup_{t\uparrow\infty}(1/t)\ln\mu_t(F\cap K)\le-\inf_{w\in F\cap
  K}\{J(w)\} \le-\inf_{w\in F}\{J(w)\}\le-\lambda$. It follows that
$\limsup_{t\uparrow\infty}(1/t)\ln\mu_t(F)\le-\lambda$, which proves
the proposition once $\lambda$ is sent to $\inf_{w\in F}\{J(w)\}$.
\end{proof}

\section{Proof of proposition \ref{proposition:freefree_energy} and of theorems \ref{mainth2} and \ref{mainth3}}
\label{sec:freemodels}

Large deviation bounds within the PM can be made a consequence of the
corresponding bounds in the CPM by exploiting conditioning as follows.
Pick an integer time $t\ge 1$ and notice that if $T_1\le t$, then
there is one and only one positive integer $n\le t$ such that $T_n\le
t$ and $T_{n+1}>t$.  Thus, $\Omega=\{T_1>t\}\cup\{T_1\le t\}$ and
$\{T_1\le t\}=\cup_{n=1}^t\{T_n\le t \mbox{ and }
T_{n+1}>t\}=\cup_{n=1}^t\cup_{\tau=n}^t\{T_n=\tau \mbox{ and }
T_{n+1}>t\}$, the events $\{T_n=\tau \mbox{ and }T_{n+1}>t\}$ for
$1\le n\le\tau\le t$ being disjoint. The condition $T_1>t$ is
tantamount to $S_1>t$ and implies that $H_t=0$ and $W_t=0$.  The
condition $T_n=\tau$ and $T_{n+1}>t$ is tantamount to $T_n=\tau$ and
$S_{n+1}>t-\tau$ and implies that $H_t=\sum_{i=1}^nv(S_i)=H_\tau$ and
$W_t=\sum_{i=1}^nX_i=W_\tau$ are independent of $S_{n+1}$. This way,
for every $\varphi\in\rew^\star$ we find the identity between measures
\begin{eqnarray}
  \nonumber
  \Ex\Big[\mathds{1}_{\{W_t\in \cdot\}}e^{\varphi(W_t)+H_t}\Big]
  &=&\Ex\Big[\mathds{1}_{\{W_t\in \cdot\}}\mathds{1}_{\{S_1>t\}}e^{\varphi(W_t)+H_t}\Big]\\
  \nonumber
  &+&\sum_{n=1}^t\sum_{\tau=n}^t\Ex\Big[\mathds{1}_{\{W_t\in\cdot\}}\mathds{1}_{\{T_n=\tau\}}\mathds{1}_{\{S_{n+1}>t-\tau\}}e^{\varphi(W_t)+H_t}\Big]\\
\nonumber
&=&\mathds{1}_{\{0\in\cdot\}}\cdot\prob[S_1>t]\\
\nonumber
&+&\sum_{\tau=1}^t\sum_{n=1}^\tau\Ex\Big[\mathds{1}_{\{W_\tau\in\cdot\}}\mathds{1}_{\{T_n=\tau\}}e^{\varphi(W_\tau)+H_\tau}\Big]\cdot\prob[S_1>t-\tau]\\
\nonumber
&=&\mathds{1}_{\{0\in\cdot\}}\cdot\prob[S_1>t]\\
&+&\sum_{\tau=1}^t\Ex\Big[\mathds{1}_{\{W_\tau\in\cdot\}}U_\tau e^{\varphi(W_\tau)+H_\tau}\Big]\cdot\prob[S_1>t-\tau].
\label{start_free}
\end{eqnarray}

Formula (\ref{start_free}) connects the free setting with the
constrained setting and is the starting point to prove proposition
\ref{proposition:freefree_energy} and theorems \ref{mainth2} and
\ref{mainth3}. Once again, we leave normalization aside at the
beginning and focus on the measure
$\nu_t:=Z_t\prob_t[W_t/t\in\cdot\,]$ on $\Brew$. Identity
(\ref{start_free}) with $\varphi=0$ results in the expression
\begin{eqnarray}
  \nonumber
  \nu_t&=&\Ex\bigg[\mathds{1}_{\big\{\frac{W_t}{t}\in \cdot\big\}}e^{H_t}\bigg]\\
  &=&\mathds{1}_{\{0\in\cdot\}}\cdot\prob[S_1>t]+
  \sum_{\tau=1}^t\Ex\bigg[\mathds{1}_{\big\{\frac{W_\tau}{t}\in \cdot\big\}}U_\tau e^{H_\tau}\bigg]\cdot\prob[S_1>t-\tau].
\label{start_free_sec}
\end{eqnarray}
We use this expression to derive a lower large deviation bound in
Section \ref{par:lowerfree} and an upper large deviation bound in
Section \ref{par:upperfree}. Theorem \ref{mainth2} is verified point
by point in Section \ref{th2pp}, where two counterexamples are also
shown to demonstrate that the upper large deviation bound for open
convex sets and closed convex sets cannot hold in general when
$\ells=-\infty$ and $I(0)=+\infty$. Finally, theorem \ref{mainth3} is
verified point by point in Section \ref{th3pp}.

Regarding proposition \ref{proposition:freefree_energy}, it is an
immediate consequence of formula (\ref{start_free}), which entails
$\Ex[e^{\varphi(W_t)+H_t}]=\prob[S_1>t]+\sum_{\tau=1}^t\Ex[U_\tau
  e^{\varphi(W_\tau)+H_\tau}]\cdot\prob[S_1>t-\tau]$ for each $t$ and
$\varphi$. Indeed, recalling the definitions
$\liminf_{t\uparrow\infty}(1/t)\ln\prob[S_1>t]=:\elli$ and
$\limsup_{t\uparrow\infty}(1/t)\ln\prob[S_1>t]=:\ells$, as well as the
limit $\lim_{t\uparrow\infty}(1/t)\ln \Ex[U_t
  e^{\varphi(W_t)+H_t}]=z(\varphi)$ due to lemma \ref{zprop}, for all
$\varphi\in\rew^\star$ we get
\begin{equation}
  \liminf_{t\uparrow\infty}\frac{1}{t}\ln \Ex\big[e^{\varphi(W_t)+H_t}\big]\ge z(\varphi)\vee\elli
  \label{Zt_inf}
\end{equation}
and
\begin{equation}
  \limsup_{t\uparrow\infty}\frac{1}{t}\ln \Ex\big[e^{\varphi(W_t)+H_t}\big]\le z(\varphi)\vee\ells.
  \label{Zt_sup}
\end{equation}

\subsection{The lower large deviation bound}
\label{par:lowerfree}

In this section we prove the following lower bound without
restrictions on $\elli$ and $\ells$.
\begin{proposition}
\label{free_open_nonorm}
  For each $G\subseteq\rew$ open
\begin{equation*}
  \liminf_{t\uparrow\infty}\frac{1}{t}\ln\nu_t(G)\ge-\adjustlimits\inf_{w\in G}\sup_{\varphi\in\rew^\star}
  \Big\{\varphi(w)-z(\varphi)\vee\elli\Big\}.
\end{equation*}
\end{proposition}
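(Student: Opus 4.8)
The plan is to extract the bound from one dominant term in the decomposition (\ref{start_free_sec}) and then optimize over the way the "mass'' $t$ is split between a constrained block of length $\tau$ and a final long waiting time of length $t-\tau$. Fix an open $G$, a point $w\in G$, and $\delta>0$ with $B_{w,\delta}\subseteq G$. For a fixed ratio $\lambda\in(0,1)$ set $\tau_t:=\lfloor\lambda t\rfloor$ and $v:=w/\lambda$. Since $(\tau_t/t)v\to\lambda v=w$, for all large $t$ the event $\{W_{\tau_t}/\tau_t\in B_{v,\delta/2}\}$ forces $W_{\tau_t}/t=(\tau_t/t)(W_{\tau_t}/\tau_t)\in B_{w,\delta}$, so retaining only the $\tau=\tau_t$ summand of (\ref{start_free_sec}) gives, for large $t$,
\begin{equation*}
\nu_t(G)\ \ge\ \nu_t(B_{w,\delta})\ \ge\ \mu_{\tau_t}\big(B_{v,\delta/2}\big)\cdot\prob[S_1>t-\tau_t].
\end{equation*}
Taking logarithms, dividing by $t$ and letting $t\uparrow\infty$, the first factor contributes $\lambda\,\mathcal{L}(B_{v,\delta/2})$ by Lemma \ref{start} (evaluated along the subsequence $\tau_t\uparrow\infty$), and the second contributes at least $(1-\lambda)\elli$ because $\liminf_{t\uparrow\infty}\frac{1}{t-\tau_t}\ln\prob[S_1>t-\tau_t]\ge\liminf_{s\uparrow\infty}\frac1s\ln\prob[S_1>s]=\elli$. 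Using $\mathcal{L}(B_{v,\delta/2})\ge\inf_{\delta'>0}\mathcal{L}(B_{v,\delta'})=-J(v)$ we obtain
\begin{equation*}
\liminf_{t\uparrow\infty}\frac1t\ln\nu_t(G)\ \ge\ -\lambda\, J\!\Big(\frac{w}{\lambda}\Big)+(1-\lambda)\elli\qquad\text{for all }w\in G,\ \lambda\in(0,1).
\end{equation*}
The endpoint $\lambda=1$ is the constrained bound $\liminf_{t}\frac1t\ln\nu_t(G)\ge\mathcal{L}(B_{w,\delta})\ge-J(w)$ coming from the $\tau=t$ term (where $\prob[S_1>0]=1$), and if $0\in G$ the first summand of (\ref{start_free_sec}) gives $\liminf_{t}\frac1t\ln\nu_t(G)\ge\liminf_{s}\frac1s\ln\prob[S_1>s]=\elli$.

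It remains to see that the supremum of all these lower bounds equals the right-hand side of the proposition. By Proposition \ref{zeta_J} together with (\ref{Jstar}), $J$ is the convex conjugate of $z$, hence $\lambda J(w/\lambda)=\sup_{\varphi\in\rew^\star}\{\varphi(w)-\lambda z(\varphi)\}$ for $\lambda>0$ and therefore $-\lambda J(w/\lambda)+(1-\lambda)\elli=\inf_{\varphi\in\rew^\star}\{\lambda z(\varphi)+(1-\lambda)\elli-\varphi(w)\}$. Consider $f(\varphi,\lambda):=\lambda z(\varphi)+(1-\lambda)\elli-\varphi(w)$ on $\rew^\star\times[0,1]$: for each $\lambda$ it is convex and lower semicontinuous in $\varphi$ (as $z$ is lower semicontinuous by Lemma \ref{zprop}), and for each $\varphi$ it is affine in $\lambda$. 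Since $[0,1]$ is compact, a minimax argument (Sion's theorem) yields
\begin{equation*}
\sup_{\lambda\in[0,1]}\ \inf_{\varphi\in\rew^\star}f(\varphi,\lambda)\ =\ \inf_{\varphi\in\rew^\star}\ \sup_{\lambda\in[0,1]}f(\varphi,\lambda)\ =\ \inf_{\varphi\in\rew^\star}\Big\{\max\{z(\varphi),\elli\}-\varphi(w)\Big\},
\end{equation*}
where on the left the $\lambda\in(0,1]$ values are $-\lambda J(w/\lambda)+(1-\lambda)\elli$ and the $\lambda=0$ value is $\elli$ when $w=0$ and $-\infty$ otherwise. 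Combining this with the bounds of the first paragraph (the $w=0$ case being taken care of by the extra $\elli$ bound) gives $\liminf_{t}\frac1t\ln\nu_t(G)\ge\inf_{\varphi}\{\max\{z(\varphi),\elli\}-\varphi(w)\}$ for every $w\in G$, and taking the supremum over $w\in G$ produces exactly the claimed inequality.

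I expect the second paragraph to be the delicate point: one must confirm that keeping a single term of (\ref{start_free_sec}) with the optimal admissible ratio $\lambda$ recovers the full conjugate of $\varphi\mapsto\max\{z(\varphi),\elli\}$ and not merely a lower estimate of it, which is precisely where compactness of $[0,1]$ (so that $\sup_\lambda\inf_\varphi=\inf_\varphi\sup_\lambda$) enters, and one should check that the minimax theorem is applicable to the extended-real-valued $f$ (e.g. by restricting $\varphi$ to $\dom z$, on which $f$ is finite for $\lambda>0$). The purely measure-theoretic steps — isolating one summand of (\ref{start_free_sec}), the rescaling $W_{\tau_t}/t=(\tau_t/t)(W_{\tau_t}/\tau_t)$, and the two separate $\liminf$ computations — are routine once Lemma \ref{start} is available.
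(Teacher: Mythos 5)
Your approach is the same as the paper's: decompose via (\ref{start_free_sec}), keep the term $\tau\approx\lambda t$, use Lemma \ref{start} and the definition of $\elli$ to get the bound $-\lambda J(w/\lambda)+(1-\lambda)\elli$, and then use Sion's minimax on $[0,1]\times\dom z$ to identify the supremum over $\lambda$ with the claimed rate. The first paragraph (the $\lambda\in(0,1]$ bounds and the $\lambda=1$ endpoint) is fine and matches the paper's argument for $\beta\in(0,1]$.

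The gap is exactly where you flag it: the $\lambda=0$ slice. You compute $\inf_{\varphi\in\rew^\star}f(\varphi,0)=\elli-\sup_{\varphi\in\rew^\star}\varphi(w)$, which is $-\infty$ for $w\neq 0$, and conclude that $\lambda=0$ costs nothing. But Sion's theorem is not applicable over all of $\rew^\star$, because for $\varphi\notin\dom z$ the map $\lambda\mapsto f(\varphi,\lambda)$ equals a finite value at $\lambda=0$ and $+\infty$ for $\lambda>0$, so it is not upper semicontinuous in $\lambda$. The only version you can invoke is over $\dom z\times[0,1]$, and there the $\lambda=0$ slice gives $\inf_{\varphi\in\dom z}f(\varphi,0)=\elli-\sup_{\varphi\in\dom z}\varphi(w)$, which need not be $-\infty$ when $w\neq 0$ (it is finite whenever $\dom z$ has bounded $w$-support). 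So after the minimax you must still prove, for each $w\in G$, that
\[
\liminf_{t\uparrow\infty}\tfrac1t\ln\nu_t(G)\ \ge\ \elli-\sup_{\varphi\in\scriptsize{\dom}z}\{\varphi(w)\},
\]
and neither your $\lambda\in(0,1]$ bounds nor the \emph{``$0\in G$''} bound cover this. One might hope concavity of $\lambda\mapsto\inf_{\dom z}f(\varphi,\lambda)$ forces the $\lambda=0$ value to be dominated by the $\lambda\in(0,1]$ values, but this fails if $J(w/\lambda)=\infty$ for all $\lambda\in(0,1]$ while $\sup_{\dom z}\varphi(w)<\infty$, a situation that is not excluded by the hypotheses.

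The paper closes this gap with a small perturbation argument you do not have: since $J$ is proper convex, pick $v$ with $c:=J(v)<\infty$; note $z(\varphi)\ge\varphi(v)-c$ by Proposition \ref{zeta_J}; because $G$ is open, $w+\epsilon v\in G$ for small $\epsilon>0$; apply the $\beta=\epsilon>0$ bound at the point $w+\epsilon v$ and observe that the $\varphi(v)$-terms cancel, giving
\[
\liminf_{t\uparrow\infty}\tfrac1t\ln\nu_t(G)\ \ge\ -\sup_{\varphi\in\scriptsize{\dom}z}\{\varphi(w)-\elli\}-\epsilon(c+\elli),
\]
and then send $\epsilon\to0$. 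You should add this (or an equivalent device) to make the $\lambda=0$ case rigorous; without it the minimax step does not yield the stated conclusion for $w\neq 0$.
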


\begin{proof}
Pick an open set $G$ in $\rew$.  In order to demonstrate the
proposition it suffices to verify that for all $w\in G$
\begin{equation}
  \liminf_{t\uparrow\infty}\frac{1}{t}\ln \nu_t(G)\ge
  -\sup_{\varphi\in\rew^\star}\Big\{\varphi(w)-z(\varphi)\vee\elli\Big\}.
\label{lower_free_1}
\end{equation}
This bound is immediate when $\elli=-\infty$.  Indeed, keeping only
the term corresponding to $\tau=t$ in the r.h.s.\ of
(\ref{start_free_sec}) we get $\nu_t(G)\ge\mu_t(G)$, which shows that
$\liminf_{t\uparrow\infty}(1/t)\ln \nu_t(G)\ge-J(w)$ for any $w\in G$
thanks to part (i) of proposition \ref{WLDP}. On the other hand,
$J(w)$ is the r.h.s.\ of (\ref{lower_free_1}) if $\elli=-\infty$ by
formula (\ref{Jstar}) and proposition \ref{zeta_J}.

The proof of (\ref{lower_free_1}) is more laborious when
$\elli>-\infty$ and we assume that $\elli>-\infty$ from now on.  Let
$\dom z:=\{\varphi\in\rew^\star:z(\varphi)<+\infty\}$ be the effective
domain of $z$ and consider the function $F$ that for a given $w\in G$
maps $(\beta,\varphi)\in[0,1]\times\dom z$ in the real number
$F(\beta,\varphi):=\varphi(w)-\beta z(\varphi)-(1-\beta)\elli$. The
function $F$ is concave and upper semicontinuous with respect to
$\varphi$ for each fixed $\beta\in[0,1]$, inheriting these properties
from $z$, and convex and continuous with respect to $\beta$ for each
fixed $\varphi\in\dom z$. Then, due to compactness of the closed
interval $[0,1]$, Sion's minimax theorem allows us to exchange the
infimum over $\beta\in[0,1]$ and the supremum over $\varphi\in\dom z$:
$\sup_{\varphi\in\scriptsize{\dom}z}\inf_{\beta\in[0,1]}\big\{F(\beta,\varphi)\big\}=
\inf_{\beta\in[0,1]}\sup_{\varphi\in\scriptsize{\dom}z}\big\{F(\beta,\varphi)\big\}$.
As $\inf_{\beta\in[0,1]}\{\varphi(w)-\beta
z(\varphi)-(1-\beta)\elli\}=\varphi(w)-z(\varphi)\vee\elli$ and
$z(\varphi)\vee\elli=+\infty$ when $\varphi\notin\dom z$, this
identity can be written as
\begin{equation*}
  \sup_{\varphi\in\rew^\star}\Big\{\varphi(w)-z(\varphi)\vee\elli\Big\}
  =\adjustlimits\inf_{\beta\in[0,1]}\sup_{\varphi\in\scriptsize{\dom}z}\Big\{\varphi(w)-\beta z(\varphi)-(1-\beta)\elli\Big\}.
\end{equation*}
This way, we get the bound (\ref{lower_free_1}) if we prove that for
every $w\in G$ and $\beta\in[0,1]$
\begin{equation}
  \liminf_{t\uparrow\infty}\frac{1}{t}\ln \nu_t(G)\ge
  -\sup_{\varphi\in\scriptsize{\dom}z}\Big\{\varphi(w)-\beta z(\varphi)-(1-\beta)\elli\Big\}.
\label{lower_free_11}
\end{equation}

We prove (\ref{lower_free_11}) considering the case $\beta>0$ first.
Pick a point $w\in G$ and a number $\beta\in(0,1]$ and denote by
$\tau_t$ the greatest integer that is less than or equal to $\beta
t$. Let $\delta>0$ be such that $B_{w,2\delta}\subseteq G$ and focus
on all those sufficiently large integers $t$ such that $\tau_t>0$ and
$\|w\|<\beta\delta t$. Within this setting, we have that the event
$W_{\tau_t}/{\tau_t}\in B_{w/\beta,\delta}$ implies $W_{\tau_t}/t\in
B_{w,2\delta}\subseteq G$. Indeed, since $0\le t-\tau_t/\beta<1/\beta$
and $\|w\|<\beta\delta t$ we find
$\|W_{\tau_t}-tw\|\le\|W_{\tau_t}-(\tau_t/\beta)
w\|+(t-\tau_t/\beta)\|w\|<\|W_{\tau_t}-(\tau_t/\beta) w\|+\delta
t$. It follows that if $\|W_{\tau_t}-(\tau_t/\beta)w\|<\delta \tau_t$,
then $\|W_{\tau_t}-tw\|<\delta\tau_t+\delta t\le 2\delta t$.  This
way, keeping only the term corresponding to $\tau=\tau_t>0$ in the
r.h.s.\ of (\ref{start_free_sec}), we obtain
\begin{eqnarray}
\nonumber
\nu_t(G)&\ge&\Ex\bigg[\mathds{1}_{\big\{\frac{W_{\tau_t}}{t}\in G\big\}}U_{\tau_t}e^{H_t}\bigg]
\cdot\prob\big[S_1>t-\tau_t\big]\\
\nonumber
&\ge&\Ex\bigg[\mathds{1}_{\big\{\frac{W_{\tau_t}}{\tau_t}\in B_{w/\beta,\delta}\big\}}U_{\tau_t}e^{H_t}\bigg]\cdot\prob\big[S_1>t-\tau_t\big]\\
&=&\mu_{\tau_t}\Big(B_{\frac{w}{\beta},\delta}\Big)\cdot\prob\big[S_1>t-\tau_t\big].
\label{lower_free_12345}
\end{eqnarray}
We have
$\lim_{t\uparrow\infty}(1/\tau_t)\ln\mu_{\tau_t}(B_{w/\beta,\delta})=\mathcal{L}(B_{w/\beta,\delta})\ge-J(w/\beta)$
by lemma \ref{start}. We also have
$\lim_{t\uparrow\infty}\tau_t/t=\beta$ and
$\liminf_{t\uparrow\infty}(1/t)\ln\prob[S_1>t-\tau_t]=(1-\beta)\elli$. The
latter limit is trivial in the case $\beta=1$ to which $\tau_t=t$
corresponds, whereas it follows from
$\liminf_{t\uparrow\infty}(1/t)\ln\prob[S_1>t]=:\elli$ when $\beta<1$
due to the fact that $t-\tau_t$ is now diverging as $t$ is sent to
infinity. These arguments in combination with (\ref{lower_free_12345})
prove that
\begin{eqnarray}
\nonumber
\liminf_{t\uparrow\infty}\frac{1}{t}\ln \nu_t(G)&\ge&
-\beta J(w/\beta)+(1-\beta)\elli\\
\nonumber  
&=&-\sup_{\varphi\in\rew^\star}\Big\{\varphi(w)-\beta z(\varphi)\Big\}+(1-\beta)\elli\\
\nonumber
&=&-\sup_{\varphi\in\scriptsize{\dom}z}\Big\{\varphi(w)-\beta z(\varphi)-(1-\beta)\elli\Big\},
\end{eqnarray}
which is (\ref{lower_free_11}) under the hypothesis that $\beta>0$.

In order to settle the case $\beta=0$, we take a point $u\in\rew$ such
that $c:=J(u)$ is finite, which exists because $J$ is proper
convex. We have
$z(\varphi)=J^\star(\varphi):=\sup_{w\in\rew}\{\varphi(w)-J(w)\}\ge\varphi(u)-c$
for all $\varphi\in\rew^\star$ by proposition \ref{zeta_J}. As $G$ is
open, for a given $w\in G$ we can find a number $\delta\in(0,1)$ such
that $w+\epsilon u\in G$ whenever $\epsilon\in(0,\delta)$. Then, the
bound (\ref{lower_free_11}) applies with a positive
$\epsilon<\delta<1$ in place of $\beta$ and $w+\epsilon u$ in place of
$w$ to give
\begin{eqnarray}
\nonumber
\liminf_{t\uparrow\infty}\frac{1}{t}\ln \nu_t(G)&\ge&
-\sup_{\varphi\in \scriptsize{\dom}z}\Big\{\varphi(w+\epsilon u)-\epsilon z(\varphi)-(1-\epsilon)\elli\Big\}\\
  \nonumber
  &\ge&-\sup_{\varphi\in \scriptsize{\dom}z}\big\{\varphi(w)-\elli\big\}-\epsilon(c+\elli).
\end{eqnarray}
We obtain (\ref{lower_free_11}) corresponding to $\beta=0$ from here
by sending $\epsilon$ to zero.
\end{proof}

\subsection{The upper large deviation bound}
\label{par:upperfree}

An upper large deviation bound for compact sets can be proved by means
of standard arguments from large deviation theory without
distinguishing the case $\ells>-\infty$ from the case $\ells=-\infty$.
The following result holds.
\begin{proposition}
\label{free_compact}
  For each compact set $K\subseteq\rew$ 
\begin{equation*}
  \limsup_{t\uparrow\infty}\frac{1}{t}\ln\nu_t(K)\le-\adjustlimits\inf_{w\in K}\sup_{\varphi\in\rew^\star}
  \Big\{\varphi(w)-z(\varphi)\vee\ells\Big\}.
\end{equation*}
\end{proposition}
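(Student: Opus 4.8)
The plan is to run the standard exponential‑Markov (Chernoff) covering argument for large deviation upper bounds, taking as input the decomposition (\ref{start_free_sec}) of $\nu_t$ as $\mathds{1}_{\{0\in\cdot\}}\prob[S_1>t]$ plus the convolution of the constrained expectations $\Ex[\mathds{1}_{\{W_\tau/t\in\cdot\}}U_\tau e^{H_\tau}]$ with the tail $\prob[S_1>t-\tau]$. Abbreviate $\Lambda^\star(w):=\sup_{\varphi\in\rew^\star}\{\varphi(w)-\max\{z(\varphi),\ells\}\}$ and $c:=\inf_{w\in K}\{\Lambda^\star(w)\}$, so that the bound to be proved is $\limsup_{t\uparrow\infty}(1/t)\ln\nu_t(K)\le-c$. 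Choosing $\varphi=0$ and recalling that $z(0)$ is finite and that $\ells\le 0$, one gets $\Lambda^\star(w)\ge-\max\{z(0),\ells\}>-\infty$, hence $c>-\infty$; it therefore suffices to fix an arbitrary real number $\lambda<c$, prove $\limsup_{t\uparrow\infty}(1/t)\ln\nu_t(K)\le-\lambda$, and then let $\lambda\uparrow c$.

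For every $w\in K$ we have $\Lambda^\star(w)\ge c>\lambda$, so we may pick $\varphi_w\in\rew^\star$ with $\varphi_w(w)-\max\{z(\varphi_w),\ells\}>\lambda$, and necessarily $z(\varphi_w)<\infty$. Continuity of $\varphi_w$ then provides a radius $\delta_w>0$ such that $m_w:=\inf_{u\in B_{w,\delta_w}}\{\varphi_w(u)\}\ge\varphi_w(w)-\|\varphi_w\|\delta_w>\lambda+\max\{z(\varphi_w),\ells\}$, and compactness of $K$ yields finitely many points $w_1,\dots,w_n$ with $K\subseteq\bigcup_{j=1}^nB_{w_j,\delta_{w_j}}$. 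Using the pointwise bound $\mathds{1}_{\{W_\tau/t\in B_{w_j,\delta_{w_j}}\}}\le e^{\varphi_{w_j}(W_\tau)-tm_{w_j}}$ together with $\Ex[U_\tau e^{\varphi_{w_j}(W_\tau)+H_\tau}]\le e^{z(\varphi_{w_j})\tau}$ from lemma \ref{zprop}, the decomposition (\ref{start_free_sec}) gives
\begin{equation*}
\nu_t(K)\le\mathds{1}_{\{0\in K\}}\,\prob[S_1>t]+\sum_{j=1}^ne^{-tm_{w_j}}\sum_{\tau=1}^te^{z(\varphi_{w_j})\tau}\,\prob[S_1>t-\tau].
\end{equation*}
When $0\in K$ the first term decays at exponential rate $\ells$, and since $\ells\le\inf_{\varphi\in\rew^\star}\max\{z(\varphi),\ells\}=-\Lambda^\star(0)\le-c<-\lambda$ it contributes at most $-\lambda$ to $\limsup_{t\uparrow\infty}(1/t)\ln$; it remains to treat each double sum.

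Fixing $j$ and substituting $s=t-\tau$, the double sum for index $j$ equals $e^{(z(\varphi_{w_j})-m_{w_j})t}\sum_{s=0}^{t-1}e^{-z(\varphi_{w_j})s}\,\prob[S_1>s]$. Choose a real $\ell'$ with $\ells<\ell'<\min_{1\le j\le n}\{m_{w_j}\}-\lambda$, which is possible because $m_{w_j}-\lambda>\max\{z(\varphi_{w_j}),\ells\}\ge\ells$ for each $j$, and use $\prob[S_1>s]\le e^{\ell's}$ for all large $s$. A routine geometric‑series estimate, distinguishing the sign of $\ell'-z(\varphi_{w_j})$, then shows that $\limsup_{t\uparrow\infty}(1/t)$ times the logarithm of the $j$‑th summand is at most $\max\{z(\varphi_{w_j})-m_{w_j},\,\ell'-m_{w_j}\}$, which is strictly below $-\lambda$ since $z(\varphi_{w_j})-m_{w_j}<-\lambda$ and $\ell'<m_{w_j}-\lambda$. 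Because the sum over $j$ is finite, this yields $\limsup_{t\uparrow\infty}(1/t)\ln\nu_t(K)\le-\lambda$, and sending $\lambda\uparrow c$ finishes the proof. The only genuinely delicate point is this last estimate: one must balance the growth rate $z(\varphi_{w_j})$ attached to $\tau$ against the decay rate $\ells$ of the tail $\prob[S_1>t-\tau]$ attached to $t-\tau$, uniformly in $t$ and uniformly over the finitely many $j$, which is exactly why $\ell'$ is squeezed between $\ells$ and $\min_j m_{w_j}-\lambda$; the degenerate case $\ells=-\infty$ is automatically included, $\ell'$ then being any real number below that threshold.
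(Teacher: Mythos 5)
Your proof is correct and follows essentially the same route as the paper's: decompose $\nu_t(K)$ via (\ref{start_free_sec}), pick an exponential moment $\varphi_w$ near each $w\in K$, apply the Chernoff bound together with $\Ex[U_\tau e^{\varphi_w(W_\tau)+H_\tau}]\le e^{z(\varphi_w)\tau}$ from Lemma \ref{zprop}, and pass through a finite subcover by compactness. The only difference is in how the convolution $\sum_\tau e^{z(\varphi_{w_j})\tau}\prob[S_1>t-\tau]$ is controlled: the paper dominates both factors by $\max\{z(\varphi_{w_i}),\ells\}+\rho$ so the exponents telescope with no case distinction, whereas you squeeze in an auxiliary rate $\ell'$ and split the geometric series according to the sign of $\ell'-z(\varphi_{w_j})$ -- a slightly longer but equally valid bookkeeping.
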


\begin{proof}
  Let $K$ be a compact set in $\rew$ and notice that $\inf_{w\in
    K}\sup_{\varphi\in\rew^\star}\{\varphi(w)-z(\varphi)\vee\ells\}\ge-z(0)\vee\ells>-\infty$. Let
  $\lambda<\inf_{w\in
    K}\sup_{\varphi\in\rew^\star}\{\varphi(w)-z(\varphi)\vee\ells\}$
  and $\rho>0$ be real numbers. As there exists $\epsilon>0$ such that
  $\sup_{\varphi\in\rew^\star}\{\varphi(w)-z(\varphi)\vee\ells\}\ge\lambda+\epsilon$
  for all $w\in K$, a linear functional $\varphi_w\in\rew^\star$ can
  be found for each $w\in K$ with the property that
  $\varphi_w(w)-z(\varphi_w)\vee\ells\ge\lambda$. It is manifest
  that $z(\varphi_w)<+\infty$ for such $\varphi_w$.  Let $\delta_w>0$
  be a number that satisfies $\delta_w\|\varphi_w\|\le\rho$. Then, for
  every positive integers $t$ and $\tau\le t$ the condition
  $W_\tau/t\in B_{w,\delta_w}$ entails
  $\varphi_w(W_\tau-tw)\ge-\|W_\tau-t
  w\|\|\varphi_w\|>-\delta_w\|\varphi_w\|t\ge-\rho t$, namely
  $\varphi_w(W_\tau)-t\varphi_w(w)+\rho t\ge 0$.  This way, bearing in
  mind that $\Ex[U_\tau e^{\varphi_w(W_\tau)+H_\tau}]\le
  e^{z(\varphi_w)\tau}$ by lemma \ref{zprop} we get for each $w\in K$
  and integers $t$ and $\tau\le t$
\begin{eqnarray}
\nonumber
  \Ex\bigg[\mathds{1}_{\big\{\frac{W_\tau}{t}\in B_{w,\delta_w}\big\}}U_\tau e^{H_\tau}\bigg]
  &\le&\Ex\Big[U_\tau e^{\varphi_w(W_\tau)-t\varphi_w(w)+t\rho+H_\tau}\Big]\\
  \nonumber
  &\le&e^{z(\varphi_w)\tau-t\varphi_w(w)+t\rho}\\
  &\le&e^{\tau[z(\varphi_w)\vee\ells]-t\varphi_w(w)+t\rho}.
\label{upper_free_1}
\end{eqnarray}
We also have for each $w\in K$ and $t$
\begin{equation}
\mathds{1}_{\{0\in B_{w,\delta_w}\}}\le e^{-t\varphi_w(w)+t\rho}
\label{upper_free_222}
\end{equation}
because if $0\in B_{w,\delta_w}$, then $\|w\|<\delta_w$ so that
$\varphi_w(w)\le\delta_w\|\varphi_w\|\le\rho$.

Due to the compactness of $K$, there exist finitely many points
$w_1,\ldots,w_n$ in $K$ such that $K\subseteq\cup_{i=1}^n
B_{w_i,\delta_{w_i}}$. The facts that
$\limsup_{t\uparrow\infty}(1/t)\ln\prob[S_1>t]=:\ells$ and
$z(\varphi_{w_i})\vee\ells>-\infty$ for each $i$ ensure the
existence of a positive constant $M<+\infty$ such that for all $t$ and
$i\le n$
\begin{equation}
\prob[S_1>t]\le Me^{t [z(\varphi_{w_i})\vee\ells]+t\rho}.
\label{upper_free_123}
\end{equation}
At this point, identity (\ref{start_free_sec}) combined with
(\ref{upper_free_1}), (\ref{upper_free_222}), and
(\ref{upper_free_123}) shows that for every $t$
\begin{eqnarray}
  \nonumber
  \nu_t(K)&\le&\sum_{i=1}^n\mathds{1}_{\{0\in B_{w_i,\delta_{w_i}}\}}\cdot\prob[S_1>t]\\
  \nonumber
  &+&\sum_{i=1}^n\sum_{\tau=1}^t\Ex\bigg[\mathds{1}_{\big\{\frac{W_\tau}{t}\in B_{w_i,\delta_{w_i}}\big\}}U_\tau e^{H_\tau}\bigg]\cdot
\prob\big[S_1>t-\tau\big]\\
\nonumber
&\le&M\sum_{i=1}^n\sum_{\tau=0}^te^{\tau [z(\varphi_{w_i})\vee\ells]-t\varphi_{w_i}(w_i)+t\rho}\cdot e^{(t-\tau)[z(\varphi_{w_i})\vee\ells]+(t-\tau)\rho}\\
\nonumber
&\le&M\sum_{i=1}^n\sum_{\tau=0}^te^{t [z(\varphi_{w_i})\vee\ells]-t\varphi_{w_i}(w_i)+2t\rho}
\le Mn(t+1)e^{-t\lambda+2t\rho},
\end{eqnarray}
which in turn yields $\limsup_{t\uparrow\infty}(1/t)\ln\nu_t(K)\le
-\lambda+2\rho$.  The proposition follows from here by sending $\rho$
to zero and $\lambda$ to $\inf_{w\in
  K}\sup_{\varphi\in\rew^\star}\{\varphi(w)-z(\varphi)\vee\ells\}$.
\end{proof}

The upper bound stated by proposition \ref{free_compact} cannot be
extended in general to convex sets when $\ells=-\infty$. However, at
least the following weaker upper bound holds for them.
\begin{lemma}
  \label{convex_free}
Let $C\subseteq\rew$ be open convex, closed convex, or any convex set
in $\Brew$ when $\rew$ is finite-dimensional.  Then, for each real
number $\ell\ge\ells$
\begin{equation*}
  \limsup_{t\uparrow\infty}\frac{1}{t}\ln\nu_t(C)\le
  -\adjustlimits\inf_{w\in C}\sup_{\varphi\in\rew^\star}\Big\{\varphi(w)
  -z(\varphi)\vee\ell\Big\}.
\end{equation*}
\end{lemma}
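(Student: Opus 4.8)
The plan is to estimate $\nu_t(C)$ term by term from the conditioning identity~(\ref{start_free_sec}), reducing each term to the constrained-model bounds already at hand for convex sets. Abbreviate $g(w):=\sup_{\varphi\in\rew^\star}\big\{\varphi(w)-\max\{z(\varphi),\ell\}\big\}$, so that the goal is $\limsup_{t\uparrow\infty}(1/t)\ln\nu_t(C)\le-\inf_{w\in C}\{g(w)\}$. Since $W_\tau/t\in C$ exactly when $W_\tau/\tau\in(t/\tau)C$, identity~(\ref{start_free_sec}) reads $\nu_t(C)=\mathds{1}_{\{0\in C\}}\prob[S_1>t]+\sum_{\tau=1}^t\mu_\tau\big((t/\tau)C\big)\prob[S_1>t-\tau]$. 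As scaling by $t/\tau\ge1$ is a linear homeomorphism, each dilation $(t/\tau)C$ sits in the same class of convex sets as $C$, so Lemma~\ref{lemma_conv} applies to it; moreover super-additivity of $s\mapsto\ln\mu_s(C')$ and Fekete's lemma give, for every convex $C'\in\Brew$, the bound $(1/\tau)\ln\mu_\tau(C')\le\lim_{s\uparrow\infty}(1/s)\ln\mu_s(C')=\mathcal{L}(C')$ for \emph{every} $\tau\ge1$ (not merely $\tau>t_c$) --- this uniformity in $\tau$ is what will let me treat all summands, including those with $\tau\le t_c$, on the same footing. Hence $\mu_\tau\big((t/\tau)C\big)\le e^{-\tau\inf_{w\in(t/\tau)C}\{J(w)\}}$, and, recalling that $J$ is the Fenchel--Legendre transform of $z$, i.e.\ $J(w)=\sup_{\varphi\in\rew^\star}\{\varphi(w)-z(\varphi)\}$ (Section~\ref{par:cI}), a change of variable in the functional turns $\tau\inf_{w\in(t/\tau)C}\{J(w)\}$ into $t\inf_{w\in C}\sup_{\varphi\in\rew^\star}\{\varphi(w)-\beta z(\varphi)\}$ with $\beta:=\tau/t$.

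Next I would fuse the two exponentials of each summand. For an integer $n\ge1$ put $q_n:=n^{-1}\ln\prob[S_1>n]\in[-\infty,0]$, so that $\prob[S_1>n]=e^{nq_n}\le e^{n\ell+n\max\{q_n-\ell,\,0\}}$, and set $\epsilon_t:=\max_{1\le n\le t}(n/t)\max\{q_n-\ell,\,0\}\ge0$. Fix $\tau\in\{1,\dots,t\}$ and write $\beta:=\tau/t$, $s:=t-\tau$. When $s=0$ (i.e.\ $\tau=t$) the summand equals $\mu_t(C)\le e^{-t\inf_{w\in C}\{J(w)\}}\le e^{-t\inf_{w\in C}\{g(w)\}}$ since $g\le J$; when $s\ge1$ the summand is at most $\mu_\tau\big((t/\tau)C\big)\,e^{s\ell+s\max\{q_s-\ell,0\}}\le e^{-t\inf_{w\in C}\sup_\varphi\{\varphi(w)-\beta z(\varphi)\}+(1-\beta)t\ell+t\epsilon_t}=e^{-t\inf_{w\in C}\sup_\varphi\{\varphi(w)-\beta z(\varphi)-(1-\beta)\ell\}+t\epsilon_t}$, using $s=(1-\beta)t$, $(s/t)\max\{q_s-\ell,0\}\le\epsilon_t$, and, in the last equality, the extraction of the $w$- and $\varphi$-free constant $(1-\beta)\ell$ from the infimum and the supremum. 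The minimax identity obtained in the proof of Proposition~\ref{free_open_nonorm} (Sion's theorem on $[0,1]\times\dom z$), which holds for any $w$ and with $\ell$ in place of $\elli$, reads $\inf_{\beta'\in[0,1]}\sup_\varphi\{\varphi(w)-\beta'z(\varphi)-(1-\beta')\ell\}=g(w)$; hence $\sup_\varphi\{\varphi(w)-\beta z(\varphi)-(1-\beta)\ell\}\ge g(w)$ for the present $\beta\in(0,1]$, so $\inf_{w\in C}\sup_\varphi\{\cdots\}\ge\inf_{w\in C}\{g(w)\}$ and every summand is bounded by $e^{-t\inf_{w\in C}\{g(w)\}+t\epsilon_t}$.

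Finally I would dispose of the term $\mathds{1}_{\{0\in C\}}\prob[S_1>t]$: it is nonzero only when $0\in C$, in which case $\inf_{w\in C}\{g(w)\}\le g(0)=-\inf_{\varphi\in\rew^\star}\{\max\{z(\varphi),\ell\}\}\le-\ell$, so $\prob[S_1>t]=e^{tq_t}\le e^{t\ell+t\max\{q_t-\ell,0\}}\le e^{-t\inf_{w\in C}\{g(w)\}+t\epsilon_t}$ as well. Adding the at most $t+1$ contributions gives $\nu_t(C)\le(t+1)\,e^{-t\inf_{w\in C}\{g(w)\}+t\epsilon_t}$, hence $\limsup_{t\uparrow\infty}(1/t)\ln\nu_t(C)\le-\inf_{w\in C}\{g(w)\}+\limsup_{t\uparrow\infty}\epsilon_t$, and it remains only to check $\epsilon_t\to0$. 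Given $\delta>0$, the hypothesis $\limsup_{s\uparrow\infty}(1/s)\ln\prob[S_1>s]=\ells\le\ell$ furnishes an $s_0$ with $\max\{q_s-\ell,0\}<\delta$ for all $s\ge s_0$, while the finitely many numbers $\max\{q_s-\ell,0\}$ with $1\le s<s_0$ are bounded by some $M_\delta<\infty$; thus $\epsilon_t\le\max\{\delta,\,s_0M_\delta/t\}\le\delta$ for all large $t$, so $\epsilon_t\to0$. I expect the main obstacle to be precisely this passage to the sharp rate $\inf_{w\in C}\{g(w)\}$ when $\ell=\ells$: there $\prob[S_1>s]$ cannot be dominated by $e^{s\ell}$ uniformly in $s$, and the remedy is to carry the excess $\max\{q_s-\ell,0\}$ along and observe that, once weighted by $s/t$, it becomes asymptotically negligible; the uniform-in-$\tau$ estimate $\mu_\tau(C')\le e^{\tau\mathcal{L}(C')}$ (so that the summands with $\tau\le t_c$ need no separate argument) and the minimax identity imported from Proposition~\ref{free_open_nonorm} are the two further ingredients.
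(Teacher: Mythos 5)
Your proof is correct and follows essentially the same strategy as the paper's: decompose $\nu_t(C)$ via~(\ref{start_free_sec}), rescale each summand to $\mu_\tau\big((t/\tau)C\big)$, and bound it by combining Lemma~\ref{lemma_conv} applied to the dilated convex set with the super-additive estimate $(1/\tau)\ln\mu_\tau\le\mathcal{L}$ valid for every $\tau\ge1$ and with the tail control on $\prob[S_1>t-\tau]$ supplied by $\ell\ge\ells$. The only divergences are in the bookkeeping: you carry the explicit rate $\inf_{w\in C}\{g(w)\}$ throughout via the representation $J=z^\star$ and weak minimax duality (the appeal to Sion's theorem is superfluous here --- you only need the trivial inequality $\sup_\varphi\inf_{\beta'}\le\sup_\varphi$ at a fixed $\beta$), absorbing the excess of $\prob[S_1>s]$ over $e^{s\ell}$ into the vanishing sequence $\epsilon_t$, whereas the paper fixes auxiliary parameters $\lambda<\inf_{w\in C}\{g(w)\}$ and $\rho>0$, chooses a suitable $\varphi_w$ directly for each $w\in C$, and sends $\lambda$ and $\rho$ to their limits at the end.
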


\begin{proof}
  Pick a real number $\ell\ge\ells$ and notice that $\inf_{w\in
    C}\sup_{\varphi\in\rew^\star}\{\varphi(w)-z(\varphi)\vee\ell\}\ge-z(0)\vee\ell>-\infty$. Fix
  a real number $\lambda<\inf_{w\in
    C}\sup_{\varphi\in\rew^\star}\{\varphi(w)-z(\varphi)\vee\ell\}$.
  To begin with, we observe that for any given real number $\eta\ge 1$
  and integer $\tau\ge 1$ we have the bound
\begin{equation}
\ln\mu_\tau(\eta C)\le -\lambda\eta\tau-\ell(\eta-1)\tau,
\label{setD_1}
\end{equation}
where $\eta C:=\{\eta w:w\in C\}\in\Brew$, which is convex, open if
$C$ is open, and closed if $C$ is closed.  Indeed, as there exists
$\epsilon>0$ such that
$\lambda+\epsilon\le\sup_{\varphi\in\rew^\star}\{\varphi(w)-z(\varphi)\vee\ell\}$
for all $w\in C$, for every $w\in C$ we can find
$\varphi_w\in\rew^\star$ satisfying $\lambda\le
\varphi_w(w)-z(\varphi_w)\vee\ell$. This way, for each $w\in C$
we obtain
\begin{eqnarray}
  \nonumber J(\eta w)&=&\sup_{\varphi\in\rew^\star}\big\{\varphi(\eta w)-z(\varphi)\big\}\ge \eta\varphi_w(w)-z(\varphi_w)\\
  \nonumber
  &\ge&\eta\lambda+\eta\big[z(\varphi_w)\vee\ell\big]-z(\varphi_w)\ge\eta\lambda+(\eta-1)\ell.
\end{eqnarray}
On the other hand, if $\gamma$ is a large enough integer so that
$\gamma\tau>t_c$, then the convexity of $\eta C$ allows us to invoke
super-additive properties to obtain $(1/\tau)\ln\mu_\tau(\eta
C)\le(1/\gamma\tau)\ln\mu_{\gamma\tau}(\eta C)\le\mathcal{L}(\eta C)$.
Consequently, lemma \ref{lemma_conv} with the set $\eta C$ entails
$\ln\mu_\tau(\eta C)\le-\tau\inf_{v\in \eta C}\{J(v)\}$, which proves
(\ref{setD_1}) because $\inf_{v\in \eta C}\{J(v)\}=\inf_{w\in
  C}\{J(\eta w)\}\ge \lambda\eta+\ell(\eta-1)$.

We use the bound (\ref{setD_1}) as follows. Given any positive
integers $t$ and $\tau\le t$, setting $\eta:=t/\tau$ we have that
$W_\tau/\tau\in \eta C$ is tantamount to $W_\tau/t\in C$. This way,
(\ref{setD_1}) yields
\begin{equation}
\Ex\bigg[\mathds{1}_{\big\{\frac{W_\tau}{t}\in C\big\}}U_\tau e^{H_\tau}\bigg]=\mu_\tau(\eta C)\le e^{-\lambda t-\ell(t-\tau)}.
\label{setD_2}
\end{equation}
For each $t$ we also find
\begin{equation}
\mathds{1}_{\{0\in C\}}\le e^{-\lambda t-\ell t}
\label{setD_234}
\end{equation}
because if $0\in C$, then
$\lambda<\sup_{\varphi\in\rew^\star}\{\varphi(w)-z(\varphi)\vee\ell\}$
with $w=0$ gives
$\lambda\le\sup_{\varphi\in\rew^\star}\{-z(\varphi)\vee\ell\}\le-\ell$.
Finally, recalling that
$\limsup_{t\uparrow\infty}(1/t)\ln\prob[S_1>t]=:\ells\le\ell$ we
realize that for any fixed number $\rho>0$ there exists a positive
constant $M<+\infty$ such that $\prob[S_1>t]\le Me^{(\ell+\rho)t}$ for
all $t\ge 0$. By making use of this bound in (\ref{start_free_sec}) as
well as bounds (\ref{setD_2}) and (\ref{setD_234}) we find
\begin{equation*}
  \nu_t(C)\le M\sum_{\tau=0}^t e^{-\lambda t-\ell(t-\tau)}\cdot e^{(\ell+\rho)(t-\tau)}\le M(t+1)e^{-\lambda t+\rho t}.
\end{equation*}
Thus $\limsup_{t\uparrow\infty}(1/t)\ln \nu_t(C)\le-\lambda+\rho$,
which proves the lemma once $\lambda$ is sent to $\inf_{w\in
  C}\sup_{\varphi\in\rew^\star}\{\varphi(w)-z(\varphi)\vee\ell\}$
and $\rho$ is sent to zero.
\end{proof}

We conclude the section demonstrating an upper large deviation bound
for closed sets under the hypothesis that $\rew$ is finite-dimensional
and that an exponential moment condition holds. No restriction on
$\ells$ is needed here.
\begin{proposition}
  \label{free_closed}
Assume that $\rew$ has finite dimension and that there exist numbers
$\zeta\ge 0$ and $\xi>0$ such that $\Ex[e^{\xi\|X_1\|+v(S_1)-\zeta
    S_1}\mathds{1}_{\{S_1<\infty\}}]<+\infty$. Then, for each
$F\subseteq\rew$ closed
\begin{equation*}
  \limsup_{t\uparrow\infty}\frac{1}{t}\ln\nu_t(F)\le-\adjustlimits\inf_{w\in F}\sup_{\varphi\in\rew^\star}\Big\{\varphi(w)
  -z(\varphi)\vee\ells\Big\}.
\end{equation*}
\end{proposition}

\begin{proof}
  Fix a closed set $F$ in $\rew$ and observe that $\inf_{w\in
    F}\sup_{\varphi\in\rew^\star}\{\varphi(w)-z(\varphi)\vee\ells\}\ge-z(0)\vee\ells>-\infty$. Pick
  a real number $\lambda<\inf_{w\in
    F}\sup_{\varphi\in\rew^\star}\{\varphi(w)
  -z(\varphi)\vee\ells\}$. Let $d$ be the dimension of $\rew$ and
  let $\varphi_1,\ldots,\varphi_{2d}$ be the linear functionals
  introduced in the proof of proposition
  \ref{closed_constrained}. Since $z(\varphi)<+\infty$ if
  $\|\varphi\|\le\xi$ by hypothesis, as we have seen in Section
  \ref{par:th1pp}, there exists a positive number $M<+\infty$ with the
  property that $z(\xi\varphi_i)\le M$ for each $i$. Pick a number
  $\rho>0$ such that $M-\xi\rho\le -\lambda$.  Denoting by $K$ the
  compact set $\cap_{i=1}^{2d}\{w\in\rew:\varphi_i(w)\le\rho\}$ we
  have $K^c=\cup_{i=1}^{2d}\{w\in\rew:\varphi_i(w)> \rho\}$.  This
  way, starting from (\ref{start_free_sec}) and noticing that
  $0\notin\{w\in\rew:\varphi_i(w)> \rho\}$ for all $i$, by using the
  Chernoff bound first and the bound
  $\Ex[U_te^{\xi\varphi(W_t)+H_t}]\le e^{z(\xi\varphi_i)t}$ due to
  lemma \ref{zprop} later we obtain
\begin{eqnarray}
  \nonumber
  \nu_t(K^c)&\le&\sum_{i=1}^{2d} \nu_t\Big(\big\{w\in\rew:\varphi_i(w)>\rho\big\}\Big)\\
  \nonumber
  &=&\sum_{i=1}^{2d}\sum_{\tau=1}^t\Ex\Big[\mathds{1}_{\{\varphi_i(W_\tau)>\rho t\}}U_\tau e^{H_\tau}\Big]\cdot\prob[S_1>t-\tau]\\
  \nonumber
  &\le&\sum_{i=1}^{2d}\sum_{\tau=1}^t\Ex\Big[U_\tau e^{\xi\varphi_i(W_\tau)-\xi\rho t+H_\tau}\Big]
  \le\sum_{i=1}^{2d} \sum_{\tau=1}^t e^{z(\xi\varphi_i) \tau-\xi\rho t}\\
  \nonumber
  &\le&2dte^{Mt-\xi\rho t}\le 2dte^{-\lambda t},
\end{eqnarray}
which gives $\nu_t(F)=\nu_t(F\cap K)+\nu_t(F\cap K^c)\le\nu_t(F\cap
K)+2dte^{-\lambda t}$ for each $t$. On the other hand, proposition
\ref{free_compact} with the compact set $F\cap K$ shows that
$\limsup_{t\uparrow\infty}(1/t)\ln\nu_t(F\cap K)\le-\inf_{w\in F\cap
  K}\sup_{\varphi\in\rew^\star}\{\varphi(w)
-z(\varphi)\vee\ells\}\le -\inf_{w\in
  F}\sup_{\varphi\in\rew^\star}\{\varphi(w)
-z(\varphi)\vee\ells\}\le-\lambda$. Thus,
$\limsup_{t\uparrow\infty}(1/t)\ln\nu_t(F)\le-\lambda$ and the
proposition is proved by sending $\lambda$ to $\inf_{w\in
  F}\sup_{\varphi\in\rew^\star}\{\varphi(w)-z(\varphi)\vee\ells\}$.
\end{proof}

\subsection{Theorem \ref{mainth2} point by point and counterexamples}
\label{th2pp}

Now we explicitly verify theorem \ref{mainth2} point by point.  Assume
$\ells=-\infty$. Then, $\elli=-\infty$ and starting from the facts
that $\ln\prob_t[W_t/t\in\cdot\,]=\ln\nu_t-\ln Z_t$ and
$Z_t:=\Ex[e^{H_t}]$ for all $t\ge 1$ we get part (a) of theorem
\ref{mainth2} thanks to proposition \ref{free_open_nonorm} and formula
(\ref{Zt_sup}) with $\varphi=0$. Similarly, part (b) of theorem
\ref{mainth2} for compact and closed sets is obtained by combining
propositions \ref{free_compact} and \ref{free_closed} with formula
(\ref{Zt_inf}). As far as convex sets is concerned, we observe that
$z(0)-I(0)=-\sup_{\varphi\in\rew^\star}\{-z(\varphi)\}=\inf_{\varphi\in\rew^\star}\{z(\varphi)\}$
so that $z(\varphi)\ge z(0)-I(0)$ for all $\varphi\in\rew^\star$. This
way, part (b) of theorem \ref{mainth2} for convex sets follows when
$I(0)<+\infty$ by invoking lemma \ref{convex_free} with
$\ell:=z(0)-I(0)$ and, again, formula (\ref{Zt_inf}) with $\varphi=0$.

The upper large deviation bound for convex sets cannot hold in general
when $\ells=-\infty$ and $I(0)=+\infty$. We show two examples where it
fails, involving an open convex set and a closed convex set,
respectively. In these examples we assume $\prob[1<S_1<\infty]=1$ and
$v=0$, so that $H_t=0$, $Z_t=1$, and $\prob_t[W_t/t\in\cdot\,]=\nu_t$
for every $t$. The following is the counterexample with an open convex
set.
\begin{example}
Consider the reward $X_i:=S_i$ for each $i$. In this example we have
$\rew=\Rl$, so that for any $\varphi\in\rew^\star$ there exists one
and only one real number $k$ such that $\varphi(w)=kw$ for all $w$.
As $\prob[S_1<\infty]=1$ and $v=0$, by identifying $\varphi$ with $k$
definitions (\ref{defzetafun}) and (\ref{defIorate}) give
$z(k)=\inf\{\zeta\in\Rl:\Ex[e^{kS_1-\zeta S_1}]\le 1\}=k$ for all
$k\in\Rl$, $I(1)=0$, and $I(w)=+\infty$ for each
$w\in\Rl\setminus\{1\}$. The rate function $I$ is consistent with the
fact that $\sum_{i\ge 1}S_i\mathds{1}_{\{T_i\le t\}}=t$ if a renewal
occurs at time $t$. The upper bound
$\limsup_{t\uparrow\infty}(1/t)\ln\prob_t[W_t/t\in C]\le-\inf_{w\in
  C}\{I(w)\}$ does not hold with the open convex set $C:=(-\infty,1)$,
for which $\inf_{w\in C}\{I(w)\}=+\infty$.  Indeed, keeping only the
term corresponding to $\tau=t-1$ in the r.h.s.\ of
(\ref{start_free_sec}), observing that $W_{t-1}/t=1-1/t\in C$ if
$U_{t-1}=1$, and recalling that $\prob[S_1>1]=1$ by assumption, we
find for each $t\ge 2$
\begin{eqnarray}
\nonumber
  1&\ge& \prob_t\bigg[\frac{W_t}{t}\in C\bigg]=\nu_t(C)\\
  \nonumber
  &\ge&\Ex\bigg[\mathds{1}_{\big\{\frac{W_{t-1}}{t}\in C\big\}}U_{t-1}e^{H_{t-1}}\bigg]\cdot\prob[S_1>1]
  =\Ex\big[U_{t-1}e^{H_{t-1}}\big],
\end{eqnarray}
giving $\lim_{t\uparrow\infty}(1/t)\ln\prob_t[W_t/t\in C]=0$ by lemma
\ref{zprop} as $z(0)=0$. 
\end{example}

The following is the counterexample with a closed convex set.
\begin{example}
Consider the reward $X_i:=(S_i,Y_i)$ for each $i$ with $Y_i$
independent of $S_i$ and distributed according to the standard Cauchy
law: $\prob[Y_i\le y]=(1/\pi)[\pi/2+\arctan(y)]$ for all $y\in\Rl$. In
this example $\rew=\Rl^2$, so that for any $\varphi\in\rew^\star$
there exists one and only one pair of real numbers $k=(k_S,k_Y)$ such
that $\varphi(w)=k_Sw_S+k_Yw_Y$ for all $w=(w_S,w_Y)$.  As
$\prob[S_1<\infty]=1$ and $v=0$, and as $Y_1$ has no exponential
moments, by identifying $\varphi$ with $k$ definition
(\ref{defzetafun}) gives $z(k)=\inf\{\zeta\in\Rl:\Ex[e^{k_S S_1-\zeta
    S_1}]\cdot\Ex[e^{k_Y Y_1}]\le 1\}=k_S$ if $k_Y=0$ and
$z(k)=+\infty$ if $k_Y\ne 0$. It follows from definition
(\ref{defIorate}) that $I(w)=0$ if $w_S=1$ and $I(w)=+\infty$
otherwise. The upper bound
$\limsup_{t\uparrow\infty}(1/t)\ln\prob_t[W_t/t\in C]\le-\inf_{w\in
  C}\{I(w)\}$ does not hold with the closed convex set
$C:=\{w\in\Rl^2:w_S<1\mbox{ and }w_Y\ge1/(1-w_S)\}$, for which
$\inf_{w\in C}\{I(w)\}=+\infty$. Indeed, as we shall show in a moment
we have for every $t\ge 2$
\begin{equation}
  1\ge \prob_t\bigg[\frac{W_t}{t}\in C\bigg]=\nu_t(C)\ge\prob\big[Y_1\ge t^2\big]\cdot\Ex\big[U_{t-1}e^{H_{t-1}}\big],
\label{toprove}
\end{equation}
giving $\lim_{t\uparrow\infty}(1/t)\ln\prob_t[W_t/t\in C]=0$ by lemma
\ref{zprop} as $z(0)=0$.

In order to prove (\ref{toprove}) we pick an integer $t\ge 2$ and
observe that when a renewal occurs at the time $t-1$, so that
$\sum_{i\ge 1}S_i\mathds{1}_{\{T_i\le t-1\}}=t-1$, then $W_{t-1}/t\in
C$ if and only if $\sum_{i\ge 1}Y_i\mathds{1}_{\{T_i\le t-1\}}\ge
t^2$. This way, keeping only the term corresponding to $\tau=t-1$ in
the r.h.s.\ of (\ref{start_free_sec}) and recalling that
$\prob[S_1>1]=1$ we get 
\begin{eqnarray}
\nonumber
\nu_t(C)&\ge&\Ex\bigg[\mathds{1}_{\big\{\frac{W_{t-1}}{t}\in C\big\}}U_{t-1}e^{H_{t-1}}\bigg]
\cdot\prob[S_1>1]\\
\nonumber
&=&\Ex\bigg[\mathds{1}_{\big\{\sum_{i\ge 1}Y_i\mathds{1}_{\{T_i\le t-1\}}\ge t^2\big\}}U_{t-1}e^{H_{t-1}}\bigg]\\
\nonumber
&=&\sum_{n=1}^{t-1}\Ex\bigg[\mathds{1}_{\big\{\sum_{i=1}^nY_i\ge t^2\big\}}\mathds{1}_{\{T_n=t-1\}}e^{H_{t-1}}\bigg]\\
\nonumber
&=&\sum_{n=1}^{t-1}\prob\Bigg[\sum_{i=1}^nY_i\ge t^2\Bigg]\cdot\Ex\Big[\mathds{1}_{\{T_n=t-1\}}e^{H_{t-1}}\Big].
\end{eqnarray}
On the other hand, $(1/n)\sum_{i=1}^nY_i$ is distributed as $Y_1$ by
the stability property of the Cauchy law so that
\begin{eqnarray}
\nonumber
  \nu_t(C)&\ge&\sum_{n=1}^{t-1}\prob\Bigg[\sum_{i=1}^nY_i\ge t^2\Bigg]\cdot\Ex\Big[\mathds{1}_{\{T_n=t-1\}}e^{H_{t-1}}\Big]\\
\nonumber
&=&\sum_{n=1}^{t-1}\prob\Big[nY_1\ge t^2\Big]\cdot\Ex\Big[\mathds{1}_{\{T_n=t-1\}}e^{H_{t-1}}\Big]\\
\nonumber
&\ge&\sum_{n=1}^{t-1}\prob\Big[Y_1\ge t^2\Big]\cdot\Ex\Big[\mathds{1}_{\{T_n=t-1\}}e^{H_{t-1}}\Big]\\
\nonumber
&=&\prob\big[Y_1\ge t^2\big]\cdot\Ex\big[U_{t-1}e^{H_{t-1}}\big].
\end{eqnarray}
\end{example}

\subsection{Theorem \ref{mainth3} point by point}
\label{th3pp}

To conclude, we explicitly verify theorem \ref{mainth3} point by
point. Assume $\ells>-\infty$. The functions $\Ii$ and $\Is$ defined
by (\ref{def:Ibasso}) and (\ref{def:Ialto}) are the Fenchel-Legendre
transform of $z\vee\elli-z(0)\vee\ells$ and
$z\vee\ells-z(0)\vee\elli$, respectively. Convexity and lower
semicontinuity of $\Ii$ and $\Is$ are immediate to check. The
functions $\Ii$ and $\Is$ are proper convex. Indeed, considering for
instance $\Ii$, we have on the one hand
$\Ii(w)\ge-z(0)\vee\elli+z(0)\vee\ells>-\infty$ for all $w\in\rew$,
and on the other hand $\Ii(u)\le J(u)+z(0)\vee\ells<+\infty$ at some
point $u$ because $J$ is proper convex. These arguments demonstrate
part (a) of theorem \ref{mainth3}. As far as part (b) and part (c) is
concerned, we recall that $\ln\prob_t[W_t/t\in\cdot\,]=\ln\nu_t-\ln
Z_t$ and that $Z_t:=\Ex[e^{H_t}]$ for all $t$ in such a way that part
(b) follows from proposition \ref{free_open_nonorm} and formula
(\ref{Zt_sup}) with $\varphi=0$. Part (c) for compact and closed sets
is due to propositions \ref{free_compact} and \ref{free_closed}
combined with formula (\ref{Zt_inf}). Finally, part (c) for convex
sets follows from lemma \ref{convex_free} with $\ell=\ells$ and,
again, formula (\ref{Zt_inf}).

\section*{Acknowledgements}
The author is grateful to Paolo Tilli for useful discussions about the
counterexamples presented in Section\ \ref{th2pp} and to Francesco
Caravenna and Paolo Dai Pra for valuable overall comments.

\appendix

\section{Proof of lemma \ref{confrontoIL}}
\label{proof:confrontoIL}

Since $S_1<\infty$ with full probability and $v=0$, according to
definition (\ref{defzetafun}) we have
$z(\varphi)=\inf\{\zeta\in\Rl\,:\,\Ex[e^{\varphi(X_1)-\zeta S_1}]\le
1\}$ for all $\varphi\in\rew^\star$. We shall show that
for every $\beta\ge 0$ and $w\in\rew$
\begin{equation}
  \label{confrontoIL_1}
\Upsilon(\beta,w)=\sup_{\varphi\in\scriptsize{\dom}z}\Big\{\varphi(w)-\beta z(\varphi)\Big\}, 
\end{equation}
where $\dom z:=\{\varphi\in\rew^\star:z(\varphi)<+\infty\}$ is the
effective domain of the function $z$. The identity $I=\Lambda$
immediately follows from (\ref{confrontoIL_1}) by taking $\beta=1$ and
proves part (a) of the lemma. Regarding part (b), assume
$\ells>-\infty$ and consider the function $F$ that for a given
$w\in\rew$ maps $(\beta,\varphi)\in[0,1]\times\dom z$ in the real
number $F(\beta,\varphi):=\varphi(w)-\beta z(\varphi)-(1-\beta)\ells$.
The function $F$ is concave and upper semicontinuous with respect to
$\varphi$ for each fixed $\beta\in[0,1]$, inheriting these properties
from $z$, and convex and continuous with respect to $\beta$ for each
fixed $\varphi\in\dom z$. Then, due to compactness of the closed
interval $[0,1]$, Sion's minimax theorem allows us to exchange the
infimum over $\beta\in[0,1]$ and the supremum over $\varphi\in\dom z$:
$\sup_{\varphi\in\scriptsize{\dom}z}\inf_{\beta\in[0,1]}\big\{F(\beta,\varphi)\big\}=
\inf_{\beta\in[0,1]}\sup_{\varphi\in\scriptsize{\dom}z}\big\{F(\beta,\varphi)\big\}$.
Since $z(0)=0$ and $z(\varphi)=+\infty$ if $\varphi\not\in\dom z$,
this identity yields
\begin{eqnarray}
  \nonumber
  \Is(w)&=&\sup_{\varphi\in\rew^\star}\Big\{\varphi(w)-z(\varphi)\vee\ells\Big\}=
  \adjustlimits\sup_{\varphi\in\scriptsize{\dom}z}\inf_{\beta\in[0,1]}\big\{F(\beta,\varphi)\big\}\\
  \nonumber
  &=&\adjustlimits\inf_{\beta\in[0,1]}\sup_{\varphi\in\scriptsize{\dom}z}\big\{F(\beta,\varphi)\big\}=
  \adjustlimits\inf_{\beta\in[0,1]}\sup_{\varphi\in\scriptsize{\dom}z}\Big\{\varphi(w)-\beta z(\varphi)-(1-\beta)\ells\Big\}\\
  \nonumber
  &=&\inf_{\beta\in[0,1]}\Big\{\Upsilon(\beta,w)-(1-\beta)\ells\Big\}=:\Lambdas(w).
\end{eqnarray}
This way, part (b) of the lemma is demonstrated as $w$ is an arbitrary
point.

Let us prove (\ref{confrontoIL_1}). Pick $\beta\ge 0$ and
$w\in\rew$. To begin with, we point out that the function that
associates $\zeta$ with $\Ex[e^{\varphi(X_1)-\zeta S_1}]$ for a given
$\varphi\in\rew^\star$ is lower semicontinuous by Fatou's lemma, so
that $\Ex[e^{\varphi(X_1)-z(\varphi) S_1}]\le 1$ if
$z(\varphi)<+\infty$. It follows that for any $\varphi\in\dom z$
\begin{eqnarray}
  \nonumber
  \LC(\beta,w)&:=&\sup_{(\zeta,\vartheta)\in\Rl\times\rew^\star}\Big\{\vartheta(w)-\beta\zeta-\ln\Ex\big[e^{\vartheta(X_1)-\zeta S_1}\big]\Big\}\\
  \nonumber
  &\ge&\varphi(w)-\beta z(\varphi)-\ln\Ex\big[e^{\varphi(X_1)-z(\varphi) S_1}\big]\ge\varphi(w)-\beta z(\varphi),
\end{eqnarray}
so that
$\inf_{\gamma>0}\{\gamma\LC(\beta/\gamma,w/\gamma)\}\ge\varphi(w)-\beta
z(\varphi)$. Continuity of $\varphi$ results in
$\Upsilon(\beta,w)\ge\varphi(w)-\beta z(\varphi)$ and the
arbitrariness of $\varphi$ gives
\begin{equation*}
\Upsilon(\beta,w)\ge\sup_{\varphi\in\scriptsize{\dom}z}\Big\{\varphi(w)-\beta z(\varphi)\Big\}.
\end{equation*}

The opposite bound, which leads us to the proof of
(\ref{confrontoIL_1}), is more involved and is achieved through the
following two inequalities:
\begin{eqnarray}
\label{confrontoIL_2}
\Upsilon(\beta,w)&\le&\adjustlimits\inf_{\gamma\in[0,\beta]}\sup_{(\zeta,\varphi)\in\mathcal{D}}\Big\{\varphi(w)-\beta\zeta-
\gamma\ln\Ex\big[e^{\varphi(X_1)-\zeta S_1}\big]\Big\}\\
\label{confrontoIL_4}
&\le&\sup_{\varphi\in\scriptsize{\dom}z}\Big\{\varphi(w)-\beta z(\varphi)\Big\},
\end{eqnarray}
where
$\mathcal{D}:=\{(\zeta,\varphi)\in\Rl\times\rew^\star:\Ex[e^{\varphi(X_1)-\zeta
    S_1}]<+\infty\}$. To get at (\ref{confrontoIL_2}) we observe that
the definition of $\Upsilon$ immediately gives
\begin{equation}
\label{confrontoIL_3}
\Upsilon(\beta,w)\le\sup_{(\zeta,\varphi)\in\mathcal{D}}\Big\{\varphi(w)-\beta\zeta-
\gamma\ln\Ex\big[e^{\varphi(X_1)-\zeta S_1}\big]\Big\}
\end{equation}
for all $\gamma>0$. The lower-semicontinuous regularization procedure
used to construct $\Upsilon$ entails that this bound also holds for
$\gamma=0$, as we now show, thus giving (\ref{confrontoIL_2}). The
rate function $\LC$ is proper convex by Cram\'er's theorem, so that
there exist $\beta_o\in\Rl$ and $w_o\in\rew$ such that
$\LC(\beta_o,w_o)$ is finite. It follows that
$\varphi(w_o)-\beta_o\zeta-\ln\Ex[e^{\varphi(X_1)-\zeta S_1}]\le
\LC(\beta_o,w_o)=:c\ge 0$, that is $\ln\Ex[e^{\varphi(X_1)-\zeta
    S_1}]\ge\varphi(w_o)-\beta_o\zeta-c$ for all $\zeta\in\Rl$ and
$\varphi\in\rew^\star$. Then, for every $\delta>0$ and
$\gamma_o\in(0,\delta)$ such that $\gamma_o|\beta_o|<\delta$ and
$\gamma_o\|w_o\|<\delta$ we find
\begin{eqnarray}
  \nonumber
  \adjustlimits\inf_{\alpha\in (\beta-\delta,\beta+\delta)}\inf_{u\in B_{w,\delta}}\inf_{\gamma>0}
  \big\{\gamma\LC(\alpha/\gamma,u/\gamma)\big\}&\le&
  \inf_{\gamma>0}\big\{\gamma\LC(\beta/\gamma+\gamma_o\beta_o/\gamma,w/\gamma+\gamma_ow_o/\gamma)\big\}\\
  \nonumber
  &\le&\gamma_o\LC(\beta/\gamma_o+\beta_o,w/\gamma_o+w_o)\\
  \nonumber
  &=&\sup_{(\varphi,\zeta)\in\mathcal{D}}\Big\{\varphi(w+\gamma_ow_o)-(\beta+\gamma_o\beta_o)\zeta-
  \gamma_o\ln\Ex\big[e^{\varphi(X_1)-\zeta S_1}\big]\Big\}\\
  \nonumber
  &\le&\sup_{(\varphi,\zeta)\in\mathcal{D}}\Big\{\varphi(w)-\beta\zeta\Big\}+\gamma_o c\\
  \nonumber
  &\le&\sup_{(\varphi,\zeta)\in\mathcal{D}}\Big\{\varphi(w)-\beta\zeta\Big\}+\delta c.
\end{eqnarray}
It follows that
\begin{eqnarray}
  \nonumber
  \Upsilon(\beta,w)&:=&\adjustlimits\lim_{\delta\downarrow 0}\inf_{\alpha\in (\beta-\delta,\beta+\delta)}\inf_{u\in B_{w,\delta}}\inf_{\gamma>0}
  \big\{\gamma\LC(\alpha/\gamma,u/\gamma)\big\}\\
  \nonumber
  &\le& \sup_{(\varphi,\zeta)\in\mathcal{D}}\Big\{\varphi(w)-\beta\zeta\Big\},
\end{eqnarray}
which exactly is (\ref{confrontoIL_3}) when $\gamma=0$.

Let us move to bound (\ref{confrontoIL_4}), which we prove by invoking
Sion's minimax theorem once again.  The function that associates
$(\zeta,\varphi)\in\mathcal{D}$ with $\Ex[e^{\varphi(X_1)-\zeta S_1}]$
is lower semicontinuous by Fatou's lemma and convex, so that the real
function that maps
$(\gamma,\zeta,\varphi)\in[0,\beta]\times\mathcal{D}$ in
$\varphi(w)-\beta\zeta-\gamma\ln\Ex[e^{\varphi(X_1)-\zeta S_1}]$ is
concave and upper semicontinuous with respect to $(\zeta,\varphi)$ for
each fixed $\gamma\in[0,\beta]$ and convex and continuous with respect
to $\gamma$ for each fixed pair $(\zeta,\varphi)\in\mathcal{D}$. Then,
Sion's theorem ensures us that
\begin{eqnarray}
  \nonumber
  &&\adjustlimits\inf_{\gamma\in[0,\beta]}\sup_{(\zeta,\varphi)\in\mathcal{D}}\Big\{\varphi(w)-\beta\zeta-
  \gamma\ln\Ex\big[e^{\varphi(X_1)-\zeta S_1}\big]\Big\}\\
  \nonumber
  &=&\adjustlimits\sup_{(\zeta,\varphi)\in\mathcal{D}}\inf_{\gamma\in[0,\beta]}\Big\{\varphi(w)-\beta\zeta-
  \gamma\ln\Ex\big[e^{\varphi(X_1)-\zeta S_1}\big]\Big\}\\
  \nonumber
  &=&\sup_{(\zeta,\varphi)\in\mathcal{D}}\Big\{\varphi(w)-\beta\zeta-\beta\ln 1\vee\Ex\big[e^{\varphi(X_1)-\zeta S_1}\big]\Big\}.
\end{eqnarray}
On the other hand, if $(\zeta,\varphi)\in\mathcal{D}$, then
$\varphi\in\dom z$ because $\Ex[e^{\varphi(X_1)-(\zeta+h) S_1}]\le
e^{-h}\Ex[e^{\varphi(X_1)-\zeta S_1}]\le 1$ for all sufficiently large
$h$ as $S_1\ge 1$ with full probability. It follows that
\begin{eqnarray}
  \nonumber
  &&\adjustlimits\inf_{\gamma\in[0,\beta]}\sup_{(\zeta,\varphi)\in\mathcal{D}}\Big\{\varphi(w)-\beta\zeta-
  \gamma\ln\Ex\big[e^{\varphi(X_1)-\zeta S_1}\big]\Big\}\\
  \nonumber
  &=&\sup_{(\zeta,\varphi)\in\mathcal{D}}\Big\{\varphi(w)-\beta\zeta-\beta\ln 1\vee\Ex\big[e^{\varphi(X_1)-\zeta S_1}\big]\Big\}\\
  \nonumber
  &\le&\sup_{\varphi\in\scriptsize{\dom}z}\sup_{\zeta\in\Rl}\Big\{\varphi(w)-\beta\zeta-\beta\ln 1\vee\Ex\big[e^{\varphi(X_1)-\zeta S_1}\big]\Big\}\\
  \nonumber
   &\le&\sup_{\varphi\in\scriptsize{\dom}z}\Big\{\varphi(w)-\beta z(\varphi)\Big\},
\end{eqnarray}
where the last bound demonstrates (\ref{confrontoIL_4}) and is
justified as follows.  Pick any $\varphi\in\dom z$ and recall that
$\Ex[e^{\varphi(X_1)-\zeta S_1}]\le 1$ or $\Ex[e^{\varphi(X_1)-\zeta
    S_1}]>1$ depending on whether $\zeta\ge z(\varphi)$ or
$\zeta<z(\varphi)$ by definition of $z(\varphi)$. The function that
associates $\zeta$ with $\zeta+\ln 1\vee\Ex[e^{\varphi(X_1)-\zeta
    S_1}]$ is lower semicontinuous and increasing for $\zeta\ge
z(\varphi)$. It is not increasing for $\zeta<z(\varphi)$ because
$\zeta+\ln 1\vee\Ex[e^{\varphi(X_1)-\zeta
    S_1}]=\ln\Ex[e^{\varphi(X_1)-\zeta(S_1-1)}]$ in this case and
$S_1\ge 1$ with full probability. Then, this function attains a global
minimum at $\zeta=z(\varphi)$, so that $\zeta+\ln
1\vee\Ex[e^{\varphi(X_1)-\zeta S_1}]\ge z(\varphi)$ for all
$\zeta\in\Rl$.



\end{document}